\newlength{\mylength}
\numberwithin{equation}{section}
\newtheorem{theorem}{Theorem}[section]
\newtheorem{proposition}[theorem]{Proposition}
\newtheorem{definition}[theorem]{Definition}
\newtheorem{remark}{Remark}
\newtheorem{numobservation}{Numerical observation}
\renewcommand{\vec}[1]{\mbox{\boldmath$#1$}}
\definecolor{orange}{rgb}{1,0.5,0}
\definecolor{dgreen}{rgb}{0,0.5,0}
\newcommand{\Srm}{\textrm{\S}}
\newcommand{\du}{\, \mathrm{d}}
\newcommand{\mat}[1]{\mbox{\boldmath$#1$}}
\newcommand{\param}{\delta}
\newcommand{\delt}{k}
\newcommand{\negNum}{\mathbb{R}_{< 0}}
\newcommand{\imex}{ImEx }
\newcommand{\timen}{n}
\newcommand{\sizeN}{N}
\newcommand{\vOmega}{\varOmega}
\newcommand{\vGamma}{\varGamma}
\newcommand{\rzeta}{\xi}
\def\XXint#1#2#3{{\setbox0=\hbox{$#1{#2#3}{\int}$}
     \vcenter{\hbox{$#2#3$}}\kern-.5\wd0}}
\begin{document}

\title{Unconditional Stability for Multistep ImEx Schemes: Theory} 
\author{Rodolfo R. Rosales
\thanks{Department of Mathematics, MIT, Cambridge, MA 02139, rrr@mit.edu.},  
Benjamin Seibold
\thanks{Department of Mathematics, Temple University, Philadelphia, PA 19122, 
seibold@temple.edu.}, 
David Shirokoff
\thanks{Corresponding author. Department of Mathematical Sciences, NJIT, Newark, NJ 07102, 
\newline \indent \hspace{2mm} david.g.shirokoff@njit.edu.}, 
Dong Zhou
\thanks{Department of Mathematics, Temple University, Philadelphia, PA 19122, 
dzhou@temple.edu.},
}


\maketitle

%
%
\begin{abstract}
This paper presents a new class of high order linear ImEx multistep schemes 
with large regions of unconditional stability. Unconditional stability is a 
desirable property of a time stepping scheme, as it allows the choice of time 
step solely based on accuracy considerations. Of particular interest are 
problems for which both the implicit and explicit parts of the ImEx splitting 
are stiff.  Such splittings can arise, for example, in variable-coefficient 
problems, or the incompressible Navier-Stokes equations. 
To characterize the new ImEx schemes, an unconditional stability 
region is introduced, which plays a role analogous to that of the stability 
region in conventional multistep methods. Moreover, computable quantities 
(such as a numerical range) are provided that guarantee an unconditionally 
stable scheme for a proposed implicit-explicit matrix splitting. The 
new approach is illustrated with several examples. 
Coefficients of the new schemes up to fifth order are provided.
\end{abstract}    

\medskip\noindent
{\bf Keywords:} Linear Multistep ImEx, Unconditional stability,
ImEx Stability, High order time stepping.

\medskip\noindent
{\bf AMS Subject Classifications:} 65L04, 65L06, 65L07, 65M12.

%
\section{Introduction} \label{sec:intro}
When a stiff differential equation is solved via an explicit time
stepping scheme, stability requires time steps that are much smaller than
imposed by accuracy. Implicit schemes can overcome this limitation.
Unfortunately, for many practical problems, a fully implicit treatment may
be structurally difficult or computationally costly.
Implicit-Explicit (ImEx) methods are based on splitting the problem into
two parts, one to be treated implicitly, and the other explicitly. In many
problems, the stiff modes can be conveniently
treated implicitly, while the explicitly treated modes are non-stiff.
Moreover, for many \imex schemes a time step restriction is incurred from
the explicit part, which is generally acceptable if it is non-stiff.

The study presented here is motivated by a different situation, namely the
case where an \imex splitting is conducted for which both parts are stiff
(see \Srm\ref{subsec:motivating_applications} for examples in which this
structure arises naturally). In that case, a time step restriction based on
the explicit part is not acceptable. We therefore aim for more, namely that
the \imex time stepping scheme, for the particular splitting, be
unconditionally stable, i.e., arbitrarily large time steps can be chosen
without losing stability.

At first glance it may sound impossible to achieve unconditional
stability if some parts of the problem are treated explicitly. The reason why
it \emph{is} possible is that the \imex scheme is applied to problems and
splitting choices that possess specific properties, so that the implicit part
can stabilize any growing modes produced by the explicit part.
This concept goes further than one may think: a properly chosen \imex
scheme can stabilize a large explicit part via a
smaller implicit part (see \Srm\ref{Subsec_A_smaller_B}).

While the task outlined above is of interest for any time stepping scheme,
this paper focuses on \imex linear multistep methods (LMMs)
\cite{AscherRuuthWetton1995, Crouzeix1980, Varah1980}. These achieve a high
order of accuracy by using information from previous time steps.  Thus, 
in each time step, they need a single evaluation of the explicit 
part, and a single solve with the implicit part 
(chapter II.3, pg.~171 \cite{HundsdorferVerwer2003}).
Because high order multistep methods tend to possess
less favorable stability properties than Runge-Kutta methods, the
task of achieving unconditional stability is of particular importance.
%
\subsection{Outline of the problem and contributions of this paper}
The problem of interest is a linear system of ordinary differential equations
\begin{equation} \label{Eqn_ODE}
 \vec{u}_t = \mat{L} \vec{u} + \vec{f}(t)
 \quad\text{with}\quad \vec{u}(0) = \vec{u}_0\;,
\end{equation}
where $\vec{u}(t),\vec{u}_0,\vec{f}(t)\in\mathbb{R}^{\sizeN}$ and
$\mat{L}\in\mathbb{R}^{\sizeN\times\sizeN}$ is a matrix. We assume that
$\mat{L}$ is stable, i.e., the homogeneous equation
$\vec{u}_t = \mat{L} \vec{u}$ has solutions that remain bounded for all time
(stability is independent of the forcing $\vec{f}$).
The term $\mat{L} \vec{u}$ in problem \eqref{Eqn_ODE} is now split into
an implicit part ($\mat{A} \vec{u}$) and an explicit part
($\mat{B} \vec{u}$), transforming \eqref{Eqn_ODE} into
\begin{equation} \label{Split}
 \vec{u}_t = \mat{A} \vec{u} + \mat{B} \vec{u} + \vec{f}(t)\;,
\end{equation}
where $\mat{B} \vec{u} = \mat{L} \vec{u} - \mat{A} \vec{u}$.

Of course, the choice of splitting $\mat{L} = \mat{A}+\mat{B}$ is not unique.
One approach is to choose $\mat A$ as the stiff terms in $\mat L$ (i.e., the
terms that would give rise to unnecessarily small time step restrictions if
treated explicitly) and $\mat B$ as the non-stiff terms in $\mat L$. In such
a case, one can guarantee stability for an \imex LMM 
\cite{FrankHundsdorferVerwer1997} by requiring a time step restriction
roughly dictated by an explicit treatment of $\mat B$. However, as
outlined above, here we are concerned with the situation where such a
splitting strategy is not feasible/practical. Hence, we seek for \imex time
stepping schemes that are unconditionally stable when applied to \eqref{Split},
where $\mat{B}$ can involve stiff terms.

Whether a time stepping scheme (of whatever kind) for \eqref{Eqn_ODE} or
\eqref{Split} is stable, depends on both the scheme and the problem's
right-hand side $\mat{L}$. A classical approach (for non-\imex schemes) in
stability analysis (chapter 7, \cite{LeVeque2007}) is to separate stability
into a property of the scheme and another property of the problem's
right-hand side, as follows. For a linear scheme, the region of absolute
stability $S\subset \mathbb{C}$ is the set of all $z = \delt\lambda$,
where $\delt$ is the time step, for which the
numerical solution remains bounded when applied to the test equation
$u_t=\lambda u$.  Similarly, one can define a region of unconditional
stability
$S_\text{u} = \{z\in\mathbb{C}\,:\, \mu z\in S\,\; \forall \mu \ge 0\}$
as the largest cone contained within $S$. If the eigenvalues of
$\mat{L}$ lie in $S_\text{u}$, then the scheme is unconditionally stable.
This concept decouples the scheme stability analysis from the detailed
properties of $\mat{L}$, relying on its spectrum $\sigma(\mat{L})$ only.
Moreover, it allows one to make stability statements
about whole classes of problems. For instance, if $S_\text{u}$ is the cone
$|\theta-\pi| < \alpha$,
where $0 < \alpha < \pi/2$
and $\theta$ is the polar angle (i.e., the scheme is $A(\alpha)$ stable),
then the scheme is unconditionally stable for all problems where $\mat{L}$
is negative definite. Conversely, we know that the same scheme is not
unconditionally stable if $\mat{L}$ is skew-symmetric.

In this paper, an analogous concept is developed for the \imex framework.
This extension is not straightforward, because one now has two right-hand
side operators $\mat{A}$ and $\mat{B}$ that, in general, do not commute and
thus do not share a set of common eigenvectors (see
\Srm\ref{subsec:existing_results} for references to the commutative case).

While the fundamental idea of \emph{stability criteria} for \imex schemes has
been presented before (see \Srm\ref{subsec:existing_results}), here we present
sufficient criteria for unconditional stability that are less restrictive
than prior work. The stability set $\mathcal{D}$ that we introduce depends
only on the coefficients of the \imex schemes, and not the matrices
$\mat{A}$ and $\mat{B}$ in the splitting \eqref{Split}. Moreover, we
devise new high order \imex schemes with \emph{very large} stability regions
that can stabilize splittings of the form \eqref{Split} which are unstable
with current schemes (see \Srm\ref{Sec_Examples}).
%
\subsection{Motivating applications}
\label{subsec:motivating_applications}
While the ideas developed here apply to an abstract ODE system \eqref{Split},
particular interest lies in systems that arise from a method of lines
(chapter 9.2, \cite{LeVeque2007}) discretization (e.g., via finite
differences, finite elements, or spectral) of linear PDE problems. Two
important applications are (let $\nabla_h$ denote the spatial discretization
of $\nabla$ in an appropriate basis with smallest length scale $h$):
\begin{enumerate}[(i)]
 \item Variable coefficient diffusion with
 \[
  \mat L \vec u = \nabla_h \cdot \big( d( x) \; \nabla_h u \big)\;
  \quad\mbox{where}\;\;d(x) > 0\;.
 \]
 Here $\mat{L}$ can be split into a constant coefficient diffusion $\mat{A}$
 and a variable coefficient diffusion $\mat{B}$. Then, fast solvers
 \cite{GreengardRokhlin1987, Trefethen2000} can treat $\mat{A}$ efficiently.
 However, $\mat{B}$ remains stiff, because it scales the same as $\mat{A}$
 (i.e., like $1/h^2$). See \Srm\ref{Subsec_var_coeff} for more details.
 \item Non-local operators, such as the Stokes operator in the linearized
 Navier-Stokes equations, whose discretization either yields a dense matrix
 or requires the addition of extra variables through the introduction of
 Lagrange multipliers,
 \[
	\mat L \vec u = \nu \nabla_h^2 u - \nabla_h p
	\quad\text{and constraint}\quad
	\nabla_h \cdot u = 0\;.
 \]
 A splitting where $\nu \nabla_h^2 u$ is implicit can create a stiff explicit
 $\nabla_h p$, \cite{JohnstonLiu2004,LiuLiuPego2007,ShirokoffRosales2010}.
\end{enumerate}

\smallskip\noindent
The theory in this paper does not directly apply to cases where
$\vec L(\vec u, t)$ is nonlinear or time-dependent, as arising for instance
with discretizations of the Cahn-Hilliard equation \cite{CahnHilliard1958}.
However, the ideas presented below for linear splitting may nevertheless be
useful in stabilizing more general splittings as well.
%
\subsection{Existing results and the new contributions in context}
\label{subsec:existing_results}
The simplest \imex scheme that can achieve unconditional stability
is a first order in time combination of forward and backward Euler steps.
The application to (\ref{Split}) yields
\begin{align}\label{EulerImex}
 \frac{1}{\delt} \big( \vec{u}_{n+1} - \vec{u}_n \big) =
 \mat{A}\vec{u}_{n+1} + \mat{B}\vec{u}_n + \vec{f}(n\delt)\;.
\end{align}
Here $\delt > 0$ is the time step, and $\vec{u}_n$ is the numerical solution
at time $t = n\delt$.

First order in time schemes that achieve unconditional stability originated
with Douglas and Dupont \cite{DouglasDupont1970b}. Other first order
approaches are:
   (i)   iterative schemes for steady state elliptic
         problems \cite{ConcusGolub1973};
   (ii)  variable coefficient diffusion with spectral methods (chapter 9, 
         \cite{GottliebOrszag1977});
   (iii) non-linear convex--concave splittings for the Cahn-Hilliard
         equation \cite{Eyre1998};
   (iv)  non-local explicit terms \cite{AnitescuLaytonPahlevani2004};
   (v)   Hele-Shaw flows \cite{Glasner2003};
   (vi)  phase-field models
\cite{BertozziJuLu2011,ElseyWirth2013,ShengWangDuWangLiuChen2010,Smereka2003};
   (vii) viscosity-pressure splittings in incompressible Navier-Stokes
         \cite{JohnstonLiu2004, LiuLiuPego2007}.   

A disadvantage of first order approaches is that, in addition to the low order,
large error constants have been reported for stable splitting choices in
dissipative equations \cite{ChristliebJonesPromislowWettonWilloughby2014}, as
well as dispersive equations~\cite{Ceniceros2002}.

Better accuracy requires higher order \imex time stepping methods. Two of the
most commonly used approaches, which can be applied to (\ref{Split}), are:
\begin{itemize}\itemsep0pt
 \item\textbf{CN-AB:} Implicit Crank-Nicolson for $\mat{A} \vec{u}$, and
  explicit Adams-Bashforth extrapolation for $\mat{B} \vec{u}$.
 \item\textbf{SBDF (Semi-implicit Backward Differentiation Formula):} 
  Implicit BDF for $\mat{A} \vec{u}$,
  and explicit Adams-Bashforth extrapolation for $\mat{B} \vec{u}$.
\end{itemize}

\smallskip\noindent
  For second order schemes, unconditional stability,
  or at least the absence of a stiff time step restriction, have
  been reported in practice for the semi-implicit
  treatment of the incompressible Navier-Stokes equations
  \cite{KarniadakisIsraeliOrszag1991, KimMoin1985}
  and the Cahn-Hilliard equation
  \cite{BadalassiCenicerosBanerjee2003}.
  Rigorous proofs that guarantee unconditional stability for second order
  \imex schemes such as SBDF or CN-AB have been given for
  convex--concave splittings of gradient flow systems
  \cite{GlasnerOrizaga2016, GuanLowengrubWangWise2014, YanChenWangWise2015},
  a coupled Stokes-Darcy system \cite{LaytonTrenchea2012} and a system
  with an explicit treatment of non-local terms \cite{Trenchea2016}.  See also
  \cite{ElseyWirth2013, XuLiWu2016} for an interpretation of some convex--concave
  splittings as fully implicit schemes with a rescaled time step.

Higher order semi-implicit schemes that guarantee unconditional stability are
not as well studied as their first and second
  order counterparts.  Some third order schemes for the Navier-Stokes equations
  have been found that do not require a diffusion-restricted time step
  \cite{KarniadakisIsraeliOrszag1991, LiuLiuPego2010}.
  General sufficient conditions on $\mat{A}$ and $\mat{B}$ guaranteeing
  unconditional stability for any order of SBDF have been outlined in
  \cite{AkrivisCrouzeixMakridakis1999} and related works
  \cite{Akrivis2013, AkrivisCrouzeixMakridakis1998}.
  Specifically
\cite{Akrivis2013,AkrivisCrouzeixMakridakis1998,AkrivisCrouzeixMakridakis1999}
  assume that $\mat{A}$ is negative definite and also allow for $\mat{B}$ to be
  nonlinear.
  The results in \cite{AkrivisCrouzeixMakridakis1999} applied
  to the case where $\mat{B}$ is a matrix, guarantee unconditional
  stability for an SBDF scheme of
  order $1 \leq r \leq 6$,\footnote{See equations (1.4)--(1.5),
  Theorem 2.1 and also Remark 2.3 in
  \cite{AkrivisCrouzeixMakridakis1999}.}
  if
\begin{align}\label{PreviousWork}
 \| (-\mat{A})^{-1/2} \mat{B}  (-\mat{A})^{-1/2} \|_2 < (2^r-1)^{-1}.
\end{align}
In related work, a set of new second order \imex coefficients was introduced 
in \cite{AkrivisKarakatsani2003}, allowing for a weaker upper bound in 
(\ref{PreviousWork}) --- it can be made arbitrarily close to $1$. The
unconditional stability criteria devised here are more general than previous
bounds such as (\ref{PreviousWork}). Instead of prescribing norm bounds, we
introduce the concept of unconditional stability diagrams for \imex schemes.
The new diagrams generalize the previous work on \imex stability regions
\cite{FrankHundsdorferVerwer1997} (see also \cite{Koto2009}) to (i) the case
of unconditional stability, and (ii) the case where $\mat{A}$ and $\mat{B}$
do not commute.  We then prescribe a set of new \imex coefficients and show
that they can achieve unconditional stability for some problems which 
violate (\ref{PreviousWork}) by orders of magnitude.  
See also chapter IV of \cite{HundsdorferVerwer2003}
for an overview of different splitting methods for ODE integration. 
Other techniques for specific problems are:
   (i) explicit RK schemes with very large stability
   regions for parabolic problems \cite{AbdulleMedovikov2001}, 
   (ii) semi-implicit deferred correction methods \cite{Minion2003}, and
   (iii) semi-implicit schemes when an integration factor (matrix exponential)
   is easily evaluated  \cite{JuZhangZhuDu2014, MilewskiTabak1999}.

This paper is organized as follows. In \Srm\ref{sec:prelim}--\ref{sec:lmm} 
we introduce \imex LMMs, the new criteria for unconditional stability, and
the definition of the unconditional stability region. In
\Srm \ref{sec:new_coefficients} we define new \imex coefficients, characterize
their unconditional stability region, and examine their effect on the
approximation error. Finally, \Srm \ref{Sec_Examples} demonstrates how a
small implicit term may stabilize a large explicit term. It also provides an
example showing how the new coefficients may be used to stabilize splittings
(\ref{Split}) that arise from a variable coefficient diffusion problem.  
We conclude with tables of the new \imex coefficients in 
\Srm \ref{Supp_coeff_tables} so that they may be used by practitioners.
%
\section{Mathematical foundations} \label{sec:prelim}
The purpose of this paper is to examine \imex LMMs (linear multistep methods) 
for splittings of the form (\ref{Split}), where $\mat A \vec u$ is treated 
implicitly, and $\mat B\vec u$ explicitly. Moreover, we are particularly 
interested in the case where both $\mat A$ and $\mat B$ are stiff, 
i.e., each term alone would 
result in severely limited time steps (due to stability) when treated 
explicitly. The goal is to first devise simple sufficient conditions that 
guarantee unconditionally stability of a time stepping scheme when applied 
to (\ref{Split}).  We will then devise new \imex schemes that allow one to 
satisfy the simple unconditional stability conditions, thereby guaranteeing 
an unconditionally stable scheme.

Here we restrict $\mat{A}\/$ to be real, self-adjoint, and negative definite.
Thus: $\mat{A}^T = \mat{A}$, and $\langle \vec x, \mat A \vec x\rangle < 0$
for all $\vec{x} \neq \vec{0}$.  We use the notation 
\[
 \langle \vec{x}\/,\,\vec{y} \rangle = \overline{\vec{x}}^T \vec{y}\/,
 \quad\mbox{and}\quad
 \|\vec{x}\|^2 = \langle \vec{x}\/,\,\vec{x} \rangle \;\; \mbox{for}\;\;
 \vec{x}\/,\,\vec{y} \in \mathbb{C}^{\sizeN}\/.
\]
Note that the restriction above, which is needed for the theoretical
presentation in this paper, is not as limiting as it might seem. A
self-adjoint, negative definite, matrix $\mat{A}$ yields desirable properties
for the efficient solution of linear systems (chapter IV, lecture 38,
\cite{TrefethenBau1997}) with coefficient matrices of the form
$(\mat I - \gamma \mat A)$,  with $\gamma > 0$. The need to solve such linear
systems arises in the time stepping of LMMs, as well as for implicit
Runge-Kutta schemes.
Hence, even if the matrix $\mat{L}$ is not symmetric (e.g.: the discretization
of a dispersive wave problem), it may still be advantageous to take $\mat {A}$
to be symmetric and negative definite, with $\mat{B} := \mat{L}-\mat{A}$.

Let $\vec u_\timen$ be the numerical solution of (\ref{Split}) at time
$t = \timen \delt$, where $\delt$ is the time step, and let
$\vec f_\timen = \vec f(\timen \delt)$. Then a LMM with $s \geq 1\/$ steps
takes the form
\begin{eqnarray} \label{fulltimestepping} 
 \frac{1}{\delt}\sum_{j = 0}^{s} a_j \; \vec u_{\timen+j} =
    \sum_{j = 0}^{s} \Big( c_j \; \mat A \vec u_{\timen+j} +
    b_{j} \; \mat B \vec u_{\timen+j} + b_j  \vec f_{\timen+j} \Big)\/,
\end{eqnarray}
where $(a_j\/,\,b_j\/,\,c_j)\/$, with $0 \leq j \leq s\/$, are the time
stepping coefficients. Here we will assume that $b_s = 0\/$ and
$a_s, c_s \neq 0\/$, so that the method is implicit in $\mat{A}$ and explicit
in $\mat{B}$ --- i.e., it is an \imex time stepping scheme. To accompany
equation (\ref{fulltimestepping}), one must also supply $s\/$ initial vectors
$\vec{u}_0\/,\,\vec{u}_1\/,\,\ldots\,\vec{u}_{s-1}\/$.

We wish to avoid any unnecessarily small time step restriction, and therefore
demand that the scheme (\ref{fulltimestepping}) be \emph{unconditionally
stable}. That is: the solutions to (\ref{fulltimestepping}), with
$\vec{f} = 0\/$, remain bounded for arbitrarily large time steps $\delt > 0$.
This leads to:
%
\begin{definition} (Unconditional stability)
 A scheme (\ref{fulltimestepping}) is unconditionally stable if: when
 $\vec f = 0\/$, there exists a constant $C$ such that
 \[
      \|\vec{u}_{\timen}\| \leq C\,\max_{0 \leq j \leq s-1} \|\vec{u}_j\|\/,
      \quad \textrm{for all}\;\;\timen \geq s, \; \delt > 0
      \quad \textrm{and}\;\; \vec{u}_j \in \mathbb{R}^{\sizeN}\/,
      \;\;\textrm{where}\;\;  0 \leq j \leq s-1\/.
 \]
 Note that $C\/$ may depend on the matrices $\mat{A}$, $\mat{B}\/$, and the
 coefficients $(a_j, b_j, c_j)$, but is independent of the time step $\delt\/$,
 the time index $\timen\/$, and the initial vectors
 $\vec{u}_j$, $0 \leq j \leq s-1$.
\end{definition}
   Unconditional stability is a strong requirement for \imex LMMs, 
   and requires the following caveat: unconditional stability is a
coupled property of \emph{both} the set of \imex coefficients
  $(a_j\/,\,b_j\/,\,c_j)\/$ and the matrices $(\mat{A}\/,\,\mat{B})$.
   Hence:
\begin{itemize}
 \item A given set of coefficients, $(a_j\/,\,b_j\/,\,c_j)\/$, may yield
    unconditional stability for some splittings $(\mat{A}\/,\,\mat{B})\/$,
    and not others.
 \item If the splitting $(\mat{A}\/,\,\mat{B})\/$ arises from the spatial
 discretization of a PDE, then a given set of coefficients
 $(a_j\/,\,b_j\/,\,c_j)\/$ may not yield 
 unconditional stability for all model parameters.   
\end{itemize}
   If the matrices $\mat{A}$ and $\mat{B}$ commute and are diagonalizeable,
   then the stability of (\ref{fulltimestepping}) can be examined by
   using the spectra, $\sigma(\mat{A})$ and $\sigma(\mat{B})$.
   In this paper we \emph{do not} assume that $\mat{A}\/$ and
   $\mat{B}\/$ commute. Hence we cannot rely on the existence of common
   eigenvectors, and must develop a different approach to study the
   stability of (\ref{fulltimestepping}), as follows:
\begin{itemize}
 \item We introduce an unconditional stability region/diagram
    $\mathcal{D}$, which is computable in terms of the scheme coefficients
    $(a_j\/,\,b_j\/,\,c_j)\/$ \emph{only}.
 \item We introduce a region in the complex plane that generalizes the
    notion of spectrum, and depends on the matrix splitting
    $(\mat{A}\/,\,\mat{B})\/$ \emph{only}.
\end{itemize}
   This approach gives a pathway to the design of splittings that
  are guaranteed to be stable for a fixed set of \imex coefficients;
  or to the choosing of \imex coefficients for which a given
  splitting $(\mat{A}\/,\,\mat{B})\/$ yields a stable scheme.
  In fact, in this paper we introduce a new class of \imex
  coefficients that may be chosen to stabilize a given splitting.
  For these new schemes the coefficients yield diagrams that
  permit (arbitrarily) large regions of unconditional stability.

%
%
\section{Stability for linear multistep methods} \label{sec:lmm}
In this section we review the stability criteria
for \imex linear multistep methods (LMMs) defined by equation 
(\ref{fulltimestepping}).  
Following a standard procedure 
(chapter III.4, \cite{HairerNorsettWanner1987}), one may recast the linear recursion relation 
(\ref{fulltimestepping}) with matrix coefficients, as a single vector recursion 
on an $s \times \sizeN$ vector:
\begin{align}\label{MatrixRecurrence}
	\vec V^\timen = \mat W \vec V^{\timen-1}, \quad 
	\textrm{where  }\; \vec V^\timen := \begin{pmatrix}
	\vec u_{\timen+s}, \; \vec u_{\timen+s - 1}, \; \ldots , \; \vec u_{\timen+1}
	\end{pmatrix}^T  \in \mathbb{R}^{s\sizeN }.
\end{align}
Here $\mat W$ is a matrix with block structure:
\begin{align}\label{LargeMatrixIteration}
	\mat W =  \begin{pmatrix}
		a_s - \delt c_s \mat A & 0 & 0 & \ldots & 0 \\
		 0 & \mat I &   0 & \ldots & 0 \\
 		 0 & 0 & \mat I &   \ldots & 0 \\
  		 \vdots &  &  & \ddots & 0 \\
   		 0 &  0 & 0 & \ldots  & \mat I 
	\end{pmatrix}^{-1}
	\begin{pmatrix}
		 \mat C_{s-1} & \mat C_{s-2} & \ldots & \mat C_1 & \mat C_0 \\
		 \mat I &  0 & \ldots & 0 & 0  \\
 		 0 &  \mat I &   \ldots & 0 & 0 \\
 		 \vdots &  & \ddots & 0 & 0 \\
   		 0 &  0 &   \ldots  & \mat I & 0 
	\end{pmatrix},	
\end{align}
where $\mat I$ is the $\sizeN \times \sizeN$ identity matrix, and
\[ \mat C_j = \delt c_j \mat A + \delt b_j \mat B - a_j \mat I, 
\quad 0 \leq j \leq s - 1. \]
%
%
%
Recall (chapter III.4, \cite{HairerNorsettWanner1987}, 
chapter V.1, \cite{WannerHairer1991}) 
that equation 
(\ref{MatrixRecurrence}), and hence the scheme (\ref{fulltimestepping}), 
is stable for a given $\delt$ if every semisimple\footnote{An eigenvalue 
$\zeta$ is semisimple if its algebraic multiplicity equals its geometric 
multiplicity.} eigenvalue of $\mat W$ satisfies $|\zeta| \leq 1$, and every 
non-semisimple eigenvalue satisfies $|\zeta| < 1$.  
In the case when $\mat A$ and $\mat B$ do not commute,
    the eigenvalues of $\mat W$ depend on both:
    (i) the matrices $\mat A$ and $\mat B$, and
    (ii) the \imex time stepping coefficients $(a_j, b_j, c_j)$.
    Hence the eigenvalues of $\mat W$ do not provide a way to
    characterize unconditional stability in a way analogous
    to that for non-\imex schemes: Some set depending on $\mat L$
    only (e.g., its spectrum) must be included within some set that is
    defined by the scheme coefficients only (the unconditional
    stability set). In what follows we devise a strategy to get
    around this problem, so that conditions that guarantee
    unconditional stability of \imex schemes can be formulated
    in a language similar to the one for non-\imex schemes, or
    for \imex schemes with commutative splits (though the set
    depending on $\mat L = \mat A + \mat B$ is no longer a
    spectrum).

Let $\vec V^* \neq \vec 0$ be an eigenvector of $\mat{W}$
with eigenvalue $\zeta$. Then, due to the structure of the bottom $(s-1)$ 
matrix blocks in $\mat W$, $\vec V^* \in \mathbb{C}^{ s \sizeN }$
has the form 
\begin{align} \label{Big_Eigenvector}
	\vec V^* = \begin{pmatrix}
	\zeta^{s-1} \vec v, \; \zeta^{s-2} \vec v, \; \ldots, 
	\; \zeta \vec v, \; \vec v
	\end{pmatrix}^T, 
	\quad \textrm{where } \vec v \neq \vec 0, \; \vec v \in \mathbb{C}^\sizeN.
\end{align}
The characteristic equation for $\mat W$ can be rewritten in the form
\[ 
	\det (\mat W - \zeta \mat I) = 0 \quad \Longleftrightarrow \quad 
	\det\Big( \frac{1}{\delt} a(\zeta) \; \mat I -  c(\zeta) \; \mat A 
	- b(\zeta) \; \mat B \Big) = 0. 
\]
%
%
%
where
\[a(z) = \sum_{j = 0}^s a_j z^j, \quad 
b(z) = \sum_{j = 0}^{s-1} b_j z^j, \quad
c(z) = \sum_{j = 0}^{s} c_j z^j\]
are polynomials determined by the time stepping coefficients
$(a_j, b_j, c_j)$, $0 \leq j \leq s$.

Hence if $\zeta$ is an eigenvalue of $\mat W$ (with possible algebraic 
multiplicity greater than one), then there always exists at least 
one $\vec V^*$ from
(\ref{Big_Eigenvector}) with $\vec v$ satisfying:
\begin{align} \label{eqn_NLeigenvalue0} 
	\mat T(\zeta) \vec v = 0, \quad \textrm{where }\; 
	\mat T(z) := \Big( \frac{1}{\delt} a(z) \; \mat{I} -  c(z) \; \mat A 
	-  b(z) \; \mat B\Big).
\end{align}
Note that one may also arrive at equation (\ref{eqn_NLeigenvalue0}) by 
substituting the normal mode ansatz $\vec u_\timen = \zeta^\timen \vec v$ 
into the general linear \imex time-stepping scheme (\ref{fulltimestepping}).  

Clearly if $\mat T(z)$ is singular for $|z| < 1$, then any eigenvector 
$\vec V^*$ of $\mat W$ has every eigenvalue (regardless of algebraic 
multiplicity) $|\zeta| < 1$. Conditions on $\mat T(z)$ for 
the stability of (\ref{fulltimestepping}) can then be stated as follows:

\begin{proposition}\label{Stability2}
	If, for a fixed $\delt > 0$, the matrix $\mat T(z)$ is non-singular  
for all $|z| \geq 1$, i.e., $\det \mat T(z) \neq 0$ for $|z| \geq 1$, then the 
scheme (\ref{fulltimestepping}) is stable.
\end{proposition}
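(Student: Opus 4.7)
My plan is to prove the contrapositive and show that under the hypothesis $\mat W$ can have \emph{no} eigenvalue $\zeta$ with $|\zeta|\ge 1$. This is strictly stronger than what the stability definition demands --- semisimple eigenvalues on the unit circle are permitted --- so proving it sidesteps any casework on the Jordan structure of $\mat W$ entirely.

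To execute the contrapositive, I would assume for contradiction that $\zeta$ is an eigenvalue of $\mat W$ with $|\zeta|\ge 1$. The excerpt has already done the key algebraic work just above (\ref{eqn_NLeigenvalue0}): exploiting the $\mat I$ subdiagonal blocks of (\ref{LargeMatrixIteration}) by block row/column reduction collapses the $s\sizeN \times s\sizeN$ determinant $\det(\mat W - \zeta \mat I)$ down to the single $\sizeN \times \sizeN$ determinant $\det \mat T(\zeta)$, producing the equivalence $\det(\mat W - \zeta \mat I)=0 \iff \det \mat T(\zeta)=0$. Since $\zeta$ is an eigenvalue of $\mat W$, the left-hand side vanishes; hence $\det \mat T(\zeta)=0$, directly contradicting the hypothesis that $\mat T(z)$ is non-singular on $|z|\ge 1$.

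With the contradiction in hand, every eigenvalue of $\mat W$ lies strictly inside the open unit disk. All Jordan blocks then decay geometrically under iteration, so $\sup_\timen \|\mat W^\timen\|$ is finite, and the vector recursion (\ref{MatrixRecurrence}) immediately yields $\|\vec V^\timen\|\le C\|\vec V^0\|$ with $C$ independent of $\timen$. Reading off components gives the uniform bound of $\|\vec u_\timen\|$ by $\max_{0\le j\le s-1}\|\vec u_j\|$ required for stability (here $C$ is allowed to depend on the fixed $\delt$, $\mat A$, $\mat B$, which is consistent with the proposition's statement ``for a fixed $\delt>0$'').

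I do not anticipate any real obstacle. The only delicate step is the determinant equivalence, which is already established above as a genuine \emph{biconditional}; were it only a one-way implication, a separate argument --- for instance, building an eigenvector of $\mat W$ from any $\vec v \in \ker \mat T(\zeta)$ via the ansatz (\ref{Big_Eigenvector}) --- would be required to go from the determinant condition back to an honest eigenvalue, but the block-structure reduction already supplies both directions for free. The remaining manipulations are standard spectral-radius reasoning on the companion-style matrix $\mat W$.
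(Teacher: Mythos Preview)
Your proposal is correct and follows essentially the same route as the paper. The paper does not include a formal \texttt{proof} environment for this proposition; instead, the argument is contained in the discussion immediately preceding the statement: the determinant equivalence $\det(\mat W-\zeta\mat I)=0 \iff \det\mat T(\zeta)=0$ is established, the sentence beginning ``Clearly if $\mat T(z)$ is singular \ldots'' draws the conclusion that all eigenvalues of $\mat W$ lie strictly inside the unit disk, and the earlier paragraph citing Hairer--N{\o}rsett--Wanner supplies the passage from that spectral condition to stability. Your write-up is simply a more explicit packaging of the same three steps (you spell out the contrapositive and the spectral-radius bound on $\|\mat W^\timen\|$ rather than citing the reference), but there is no substantive difference in approach.
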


\begin{remark}
	Proposition~\ref{Stability2} is not sharp as we have omitted
	the possibility for $\det \mat{T}(\zeta) = 0$ with $|\zeta| = 1$.  
\end{remark}
%
\subsection{The stability region $\mathcal{D}$}\label{Subsec_stabilityregion}
The $\sizeN \times \sizeN$ matrix equation (\ref{eqn_NLeigenvalue0}) still
couples together both the matrices $(\mat A\/,\,\mat B)$ to the scheme
coefficients $(a_j, b_j, c_j)$.  To decouple the time stepping stability
analysis (i.e., the time stepping coefficients) from the details of the ODE
being solved (i.e., the matrices $\mat{A}\/$ and  $\mat{B}\/$), we multiply
(\ref{eqn_NLeigenvalue0}) by the positive definite matrix $(-\mat A)^{p-1}$,
where $p \in \mathbb{R}$ --- $p$ real is all that is needed for the analysis
below to hold. 
In the examples in \S\ref{Sec_Examples},  we will eventually focus on
$p = 1\/$, as it is observed that this choice provides sufficient estimates
for the test problems we consider.  The stability theory obtained with other
values of $p\neq 1$ may still however be of use in the numerical treatment
of other PDEs, distinct from those in \S\ref{Sec_Examples}.
Thus:   
\[
 \frac{1}{\delt} a(\zeta) (-\mat A)^{p-1} \vec v =
 -c(\zeta) (-\mat A)^p \vec v + b(\zeta) (-\mat A)^{p-1} \mat B \vec v\/.
\]
Dotting through with $\vec v$ and setting
\begin{align} \label{Range} 
 y = -\delt \frac{\langle \vec v, (-\mat A)^{p} \vec v \rangle}{\langle
      \vec v, (- \mat A)^{p-1} \vec v \rangle }\/,  \quad \quad
 \mu = \frac{\langle \vec v, (-\mat A)^{p-1} \mat B \vec v\rangle }{\langle
      \vec v, (-\mat A)^{p} \vec v\rangle}\/, 
\end{align}
we obtain the equation\footnote{The polynomial (\ref{Model_Eq}) with
   $\mu = 0$ was used in convergence proofs in
\cite{Akrivis2013,AkrivisCrouzeixMakridakis1998,%
AkrivisCrouzeixMakridakis1999, Crouzeix1980}.
   A similar equation $a(\zeta) = \lambda c(\zeta) + \mu b(\zeta)$ was
   obtained in \cite{AscherRuuthWetton1995} for commuting matrices
   $\mat{A}\/$ and $\mat{B}\/$, and studied as a model equation
   for stability in \cite{FrankHundsdorferVerwer1997} to estimate explicit
   time step $\delt$ restrictions.
   However, note that here we \emph{do not} assume that $\mat{A}\/$
      and $\mat{B}\/$ commute.}
\begin{align} \label{Model_Eq} 
 a(\zeta) = y\; c(\zeta)  - y \mu \; b(\zeta).
\end{align}
   Since $(-\mat A)$ is positive definite, $y$ may take
   any value $y < 0$ as $\delt$ varies over the allowable values
   $\delt > 0$, with any $\vec{v} \neq 0\/$ fixed.  The following 
   definition is then justified by the result in Proposition~\ref{Stability2}.
\begin{definition} \label{Def_Stability}
 (Stability) The polynomial equation (\ref{Model_Eq}) is stable,
 for a given $y < 0$ and $\mu \in \mathbb{C}$, if every solution
 satisfies $|\zeta| < 1$.
\end{definition}
\begin{definition} \label{Def_regionUstab}
(Unconditional stability region)
We define the region of unconditional stability $\mathcal{D}$, as the values
of $\mu$ so that  (\ref{Model_Eq}) is stable for all
$y \in \negNum \cup \{-\infty\}$.  Formally, define the following sets
\begin{align}\nonumber
 \mathcal{D}_y &:= \{ \mu \in \mathbb{C} : (\ref{Model_Eq})
    \textrm{ is stable for a fixed } y \in \negNum \}\/, \\ \nonumber
 \mathcal{D}_{-\infty} &:= \{ \mu \in \mathbb{C} : c(\zeta)-\mu b(\zeta)
    \textrm{ has stable roots}\}\/,
\end{align}
   \[ \mathcal{D} = \bigcap_{y \in \negNum\cup\{-\infty\}} \mathcal{D}_y\/.\]
\end{definition}  
Note that $\mathcal{D}$ depends only on the \imex time-stepping coefficients
and not on the matrices $\mat A, \mat B$. Moreover,
$\mathcal{D}$ may be empty for some schemes.  
%
\subsection{Numerical range and sufficient condition for unconditional
   stability}\label{SubSection_StabilityRegions} 
The exact realizable values of $\mu$ defined by the expression in
(\ref{Range}), for a given splitting $(\mat{A}, \mat{B})$ and time stepping 
coefficients, are determined through
the normal modes $\vec v$.  To find these values of $\mu$, 
which form a discrete, finite set in the complex plane, one must solve the 
fully coupled eigenvalue problem given by (\ref{eqn_NLeigenvalue0}).
A better and simpler approach is to overestimate the region in the complex plane 
where the values of $\mu$ reside.  Specifically, the values of $\mu$ belong 
to the complex set obtained by allowing $\vec v$ to vary over all possible 
vectors. That is:
\[ \mu \in W_p\/, \quad \textrm{where }
   W_p := \Big\{ \langle \vec v, (-\mat A)^{p-1} \mat B \vec v\rangle :
          \langle \vec v, (-\mat A)^{p} \vec v\rangle = 1 \Big\}\/. \]
Using a straightforward change of variables
$\vec v = (-\mat A)^{\frac{p}{2}} \vec x $, and the fact that $\mat A$ is
symmetric, the set $W_p$ can be identified as:
\[ W_p = W\Big( (-\mat A)^{\frac{p}{2} - 1} \mat \; \mat B \;
    (-\mat A)^{-\frac{p}{2}} \Big)\/. \]
Here $W(\mat X)$ denotes the \emph{numerical range} (also known as 
the \emph{field of values}) of a matrix
$\mat X \in \mathbb{C}^{\sizeN \times \sizeN}$ and is defined by
\begin{align} \label{Def_NumericalRange} 
 W(\mat X) := \{ \langle \vec x, \mat X \vec x \rangle :
            \| \vec x \| = 1, \vec x \in \mathbb{C}^{\sizeN}\}\/. 
\end{align}
See \Srm\ref{Supp_properties_W} for a list of standard properties for $W(\mat{X})$. 
One then arrives at a sufficient condition for unconditional stability for 
equation (\ref{fulltimestepping}):
%
\begin{theorem} [Sufficient condition for unconditional stability]
 Suppose that a matrix splitting $(\mat{A}\/,\,\mat{B})$ has sets $W_p$
 for $p \in \mathbb{R}$
 and that the LMM time stepping coefficients $(a_j, b_j, c_j)$ have an
 unconditional stability region $\mathcal{D}$. Then, if there exists a
 $p\in \mathbb{R}$ such that $W_p \subseteq \mathcal{D}$, the scheme
 (\ref{eqn_NLeigenvalue0}) is unconditionally stable.
\end{theorem}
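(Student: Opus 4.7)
The plan is to verify the sufficient condition of Proposition~\ref{Stability2} at every $\delt > 0$ by contradiction. Fix a $p \in \mathbb{R}$ with $W_p \subseteq \mathcal{D}$, fix an arbitrary $\delt > 0$, and suppose there exists $\zeta$ with $|\zeta| \geq 1$ such that $\det \mat T(\zeta) = 0$. Then a nonzero $\vec v \in \mathbb{C}^{\sizeN}$ exists with $\mat T(\zeta) \vec v = \vec 0$. Since $-\mat A$ is symmetric positive definite, so are $(-\mat A)^{p-1}$ and $(-\mat A)^p$; in particular the denominators appearing in (\ref{Range}) are strictly positive. Taking the inner product of $(-\mat A)^{p-1} \mat T(\zeta) \vec v = \vec 0$ with $\vec v$ and dividing through by $\langle \vec v, (-\mat A)^{p-1} \vec v\rangle$ then reproduces exactly the scalar polynomial equation (\ref{Model_Eq}), with $y,\mu$ as in (\ref{Range}).

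Next I would locate the pair $(y, \mu)$ within the regions the theorem refers to. Because $\delt > 0$ and both inner products in the definition of $y$ are positive, one has $y \in \negNum$. Rescaling $\vec v$ so that $\langle \vec v, (-\mat A)^p \vec v\rangle = 1$ identifies $\mu$ as an element of $W_p$ directly from its definition in (\ref{Range}). The hypothesis $W_p \subseteq \mathcal{D}$ yields $\mu \in \mathcal{D}$, and by Definition~\ref{Def_regionUstab} $\mathcal{D} \subseteq \mathcal{D}_y$ for this particular $y \in \negNum$, so $\mu \in \mathcal{D}_y$. Definition~\ref{Def_Stability} then forces every root of (\ref{Model_Eq}) at these fixed values of $y,\mu$ to satisfy $|\zeta| < 1$, which contradicts $|\zeta| \geq 1$. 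Hence $\det \mat T(z) \neq 0$ on $|z| \geq 1$ for every $\delt > 0$, and Proposition~\ref{Stability2} delivers stability at each $\delt$.

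The main subtlety I anticipate is promoting this pointwise-in-$\delt$ stability to unconditional stability in the sense of Definition~2.1, which demands a constant $C$ that is \emph{independent} of $\delt$. This is where the $y = -\infty$ clause in Definition~\ref{Def_regionUstab} carries its weight: as $\delt \to \infty$ the quantity $y$ tends to $-\infty$, and one must keep the roots of (\ref{Model_Eq}) uniformly inside the open unit disk along that limit. A natural way to package this is to rewrite (\ref{Model_Eq}) as $(1/y)\, a(\zeta) = c(\zeta) - \mu \, b(\zeta)$, parametrize by $(1/y, \mu)$ over a compact set of the form $[-r, 0] \times \overline{W_p}$, and invoke continuity of the roots in these parameters together with $\mathcal{D}_{-\infty}$ to cover the endpoint $1/y = 0$. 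A uniform gap $|\zeta| \le 1 - \epsilon$ then yields a uniform power-bound on the block iteration matrix $\mat W$ in (\ref{MatrixRecurrence}), giving the $\delt$-independent constant $C$ required by Definition~2.1.
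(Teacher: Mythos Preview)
Your first two paragraphs are exactly the paper's argument, which is not set off as a separate proof block but is the content of \S\ref{Subsec_stabilityregion}--\S\ref{SubSection_StabilityRegions} leading up to the theorem: assume a null vector of $\mat T(\zeta)$ with $|\zeta|\ge 1$, multiply by $(-\mat A)^{p-1}$, take the inner product with $\vec v$, and arrive at (\ref{Model_Eq}) with $(y,\mu)$ as in (\ref{Range}); then $\mu\in W_p\subseteq\mathcal D\subseteq\mathcal D_y$ forces $|\zeta|<1$, contradicting the assumption and invoking Proposition~\ref{Stability2}. At the level the paper operates, your proof is correct and essentially identical.

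Your third paragraph goes beyond the paper, which does not explicitly address the uniformity of $C$ in $\delt$ demanded by Definition~2.1. Your compactness idea via $(1/y,\mu)\in[-r,0]\times\overline{W_p}$ correctly handles the limit $\delt\to\infty$ (i.e.\ $1/y\to 0^-$), which is precisely where $\mathcal D_{-\infty}$ earns its keep. But it does not cover $\delt\to 0^+$: there $y\to 0^-$, so $1/y\to -\infty$, which escapes any interval $[-r,0]$. In that limit (\ref{Model_Eq}) degenerates to $a(\zeta)=0$, which has a root at $\zeta=1$, so no uniform gap $|\zeta|\le 1-\epsilon$ can hold across all $\delt>0$. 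The small-$\delt$ end must instead be handled by zero stability (the root at $1$ is simple and the remaining roots of $a$ lie strictly inside the unit disk) together with continuity of $\mat W$ in $\delt$ near $\delt=0$; combining that with your compactness argument on $\delt\in[\delt_0,\infty]$ then yields the $\delt$-independent constant.
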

%
\begin{remark}
 Different values of $p$ may modify the size of $W_p$ in the complex plane.
 The sufficient condition for unconditional stability only requires one
 value of $p$ to satisfy $W_p \subseteq \mathcal{D}$, (even if other
 values of $p$ violate $W_p \subseteq \mathcal{D}$).
\end{remark}
%

%
\section{New \imex coefficients}\label{sec:new_coefficients}
%
%
%
\subsection{Definition of the new \imex coefficients}\label{SubSection_NewImEx}
The property of unconditional stability is not limited to LMMs, however
here we focus on LMMs only.
Any \imex LMM where the number of steps equals the order of the scheme 
$s = r$, is completely defined by specifying the polynomial $c(z)$.  
For instance given $s = r$ and a fixed $c(z)$, the order conditions define the 
polynomials $a(z),  b(z)$ and subsequently all time stepping coefficients.  
Therefore, the roots\footnote{Since rescaling the \imex coefficients 
$(a_j, b_j, c_j)$ by an overall constant does not modify a scheme, one can 
take without loss of generality the leading coefficient of $c(z)$ to be $1$.} 
of the polynomial $c(z)$ can also be used to uniquely define any \imex scheme 
when $r = s$.  
The new \imex coefficients proposed in this paper will be prescribed by the
location 
of the roots of $c(z)$.  In particular, regions of unconditional stability 
$\mathcal{D}$ depend strongly on the location of the roots of $c(z)$, and 
become large when the roots of $c(z)$ become close to $1$ (see also
\Srm\ref{Supp_roots}). 
Although there are many options for parameterizing how the roots of $c(z)$
approach $1$, we choose the simplest approach and \emph{lock} all the roots
together.
%
\begin{definition} \label{NewImExCoeff} (New ImEx Coefficients) 
For orders $1 \leq r \leq 5$, and $0 < \param \leq 1$, the new ImEx 
coefficients $(a_j, b_j, c_j)$, for $0 \leq j \leq r$, are defined as the 
following polynomial coefficients:
\begin{align} \label{NewImex_c}
	 &\textrm{(Implicit coeff.)} \quad 
	 c(z) = (z - 1 + \param)^r, \\ \label{NewImex_b}
	 &\textrm{(Explicit coeff.)} \quad 
	 b(z) = (z - 1 + \param)^r - (z- 1)^r, 
\end{align}
The time stepping polynomial $a(z)$ is concisely written as the $r$-th order 
Taylor polynomial centered at $z = 1$ of the generating function $f(z)$,
\begin{align}\label{NewImex_a}
	\textrm{(Derivative coeff.)} \hspace{2mm} 
	a(z) = \sum_{j = 1}^r \frac{f^{(j)}(1)}{j!} (z- 1)^j, \hspace{2mm} 
	f(z) = (\ln z) (z - 1 + \param)^r.
\end{align}
\end{definition}
%
Note that once $c(z)$ is chosen, $a(z)$ and $b(z)$ are uniquely determined. 
For more on this, see Proposition~\ref{Order_conditions} below.
In \Srm \ref{Supp_coeff_tables} we report the \imex coefficients
$(a_j, b_j, c_j)$ as polynomial functions of $\param$. In the case when
$\param = 1$, the new coefficients recover the combined SBDF -- backward
differentiation formula (for the implicit $c(z)$) and  Adams-Bashforth (for
the explicit $b(z)$). For $\param < 1$ the roots of $c(z)$ shift towards
$z = 1$.
The new coefficients bear some similarity to the one-parameter, high order,
multistep schemes with large absolute stability regions studied in
\cite{JeltschNevanlinna1981, JeltschNevanlinna1982}.  We stress, however,
that our use of the \imex coefficients in Definition~\ref{NewImExCoeff} is 
of a fundamentally different nature than the non-ImEx investigation found 
in \cite{JeltschNevanlinna1981, JeltschNevanlinna1982}.
Specifically, we select a $\param$ value that is strictly bounded away from
$0$, based on the \imex splitting $(\mat{A}\/,\,\mat{B})$ of $\mat{L}$, which
yields an unconditionally stable method.
Moreover, a subsequent error investigation indicates that $\param$ should be
selected as large as possible, while still maintaining unconditional
stability.
\begin{remark}
 We limit Definition~\ref{NewImExCoeff} to orders $r \leq 5$. SBDF schemes
 ($\param = 1$) with orders $r \geq 7$ are not zero stable. Furthermore, the
 characterization of $\mathcal{D}$ for $r = 6$ is not contained within the
 theory presented in the following subsection. Specifically, the Numerical
 Observation~\ref{NumObservation} (see \Srm\ref{Subsec_stab_region}) fails
 for $r = 6$ and $\param = 1$.
\end{remark}
%
\begin{proposition}\label{Order_conditions}
 For all $0 < \param \leq 1$ and orders $1 \leq r \leq 5$, the ImEx
 coefficients in Definition~\ref{NewImExCoeff} are zero-stable and satisfy
 the $r$-th order conditions. 
\end{proposition}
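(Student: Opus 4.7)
The proposition splits into an order-of-accuracy statement and a zero-stability statement, and my plan is to treat them sequentially, reducing both to concrete properties of the polynomial $a(z)$.

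For the order conditions, I would invoke the standard characterization that the ImEx LMM \eqref{fulltimestepping} is of order $r$ if and only if its two constituent LMMs---the implicit pair $(a,c)$ and the explicit pair $(a,b)$---are each of order $r$, equivalently
\[
a(z)-(\ln z)\,c(z)=O\bigl((z-1)^{r+1}\bigr),\qquad a(z)-(\ln z)\,b(z)=O\bigl((z-1)^{r+1}\bigr)\quad\text{as }z\to 1.
\]
The first relation is immediate from \eqref{NewImex_a}: $a(z)$ is by construction the degree-$r$ Taylor polynomial at $z=1$ of $f(z)=(\ln z)\,c(z)$, and its constant term vanishes since $f(1)=0$. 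The second follows by adding $(\ln z)\bigl(c(z)-b(z)\bigr)=(\ln z)(z-1)^r=O((z-1)^{r+1})$ to the first relation, using the defining identity $c(z)-b(z)=(z-1)^r$ from \eqref{NewImex_b}.

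For zero stability, $a(z)$ already has $z=1$ as a root (no constant term), and this root is simple because $a'(1)=f'(1)=\delta^r>0$ for every $\delta\in(0,1]$. Factoring $a(z)=(z-1)\tilde a(z)$ with $\deg\tilde a=r-1$, the task reduces to showing all roots of $\tilde a$ lie in the open unit disk, uniformly in $\delta\in(0,1]$. At $\delta=1$ one has $c(z)=z^r$ and $f(z)=(\ln z)\,z^r$, so $a(z)$ coincides with the BDF-$r$ characteristic polynomial, which is classically zero-stable for $r\le 6$ and hence throughout the range $r\le 5$ of interest.

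To propagate zero stability to all $\delta\in(0,1]$ I would argue by continuity. The leading coefficient of $a(z)$ is
\[
\frac{f^{(r)}(1)}{r!}=\sum_{k=1}^{r}\binom{r}{k}\frac{(-1)^{k-1}}{k}\delta^k=\int_{0}^{\delta}\frac{1-(1-t)^r}{t}\,dt,
\]
which is strictly positive on $(0,1]$; hence $\deg a=r$ is constant in $\delta$ and the nontrivial roots of $\tilde a$ depend continuously on $\delta\in(0,1]$. To exclude a crossing of $|z|=1$ as $\delta$ decreases from $1$, it is enough to verify that $a(e^{i\theta})\ne 0$ for all $\theta\in(0,2\pi)$ and $\delta\in(0,1]$; combined with zero stability at $\delta=1$, this forces the $r-1$ nontrivial roots to remain strictly inside the unit disk throughout. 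This verification is the main technical obstacle, and I would carry it out case by case: for $r\in\{1,2\}$ it is a direct calculation (for $r=2$ the sole nontrivial root is $(4-3\delta)/(4-\delta)\in(1/3,1)$), and for $r\in\{3,4,5\}$ the Schur--Cohn/Jury test applied to $\tilde a(z)$ yields finitely many polynomial inequalities in $\delta$ whose validity on $(0,1]$ is checked symbolically from the explicit coefficient tables in \Srm\ref{Supp_coeff_tables}.
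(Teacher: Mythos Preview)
Your order-condition argument is essentially the paper's own: both identify $a(z)$ as the $r$-th order Taylor polynomial of $(\ln z)\,c(z)$ at $z=1$, and both use the identity $c(z)-b(z)=(z-1)^r$ to transfer the implicit order condition to the explicit one. The only cosmetic difference is direction---the paper \emph{derives} the formula for $b(z)$ from the order conditions, whereas you take it as given and \emph{verify}---but the content is the same.

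For zero stability, your proposal is genuinely different from, and considerably stronger than, the paper's. The paper explicitly states that it lacks an analytic proof and instead presents numerical plots of the roots of $a(z)$ over $0<\delta\le 1$ (Figure~\ref{Zero_Stability}), treating the observed containment inside the unit disk as verification. Your route---anchoring at $\delta=1$ via the known BDF zero stability, establishing that $\deg a\equiv r$ through the integral formula for the leading coefficient, handling $r\le 2$ by explicit factorization, and for $r\in\{3,4,5\}$ reducing to finitely many polynomial inequalities in $\delta$ via the Schur--Cohn test on $\tilde a(z)$---would, if carried through, yield an honest analytic proof where the paper has none. The trade-off is that the Schur--Cohn inequalities for $r=4,5$ (quartic $\tilde a$) are algebraically heavy and you have not actually exhibited them; your proposal correctly flags this as the main technical obstacle, but be aware that the paper simply sidesteps it. One minor point: once you invoke Schur--Cohn directly on $\tilde a$, the preceding continuity\,/\,no-crossing argument becomes redundant, since Schur--Cohn already certifies all roots lie in the open disk for each fixed $\delta$.
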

%
See \Srm\ref{Supp_Prop_Order_conditions} for the verification of 
Proposition~\ref{Order_conditions}.
%
\subsection{Stability regions for the new \imex coefficients}
\label{Subsec_stab_region}
\hspace*{0em}
The region $\mathcal{D}$ was introduced in the context of the sufficient
conditions for unconditional stability. As we will see later (in
\Srm\ref{SubSection_NecessaryConditions}) it also plays a role in the
necessary conditions for unconditional stability. In this section we
characterize the geometry of $\mathcal{D}$ for the \imex coefficients in
Definition~\ref{NewImExCoeff}. This geometry (i.e., the size and shape of
$\mathcal{D}$ in the complex plane) fixes classes of splittings
$(\mat A, \mat B)$ that are, or are not, unconditionally stable.  Roughly
speaking, for small $\param$ values, $\mathcal{D}$ approaches the union of
(i)  a large circle with radius $\sim (r \delta)^{-1}$ and center
     $\sim -(r\delta)^{-1}\/$, and
(ii) a triangular region, symmetric relative to the real axis, with its tip
     on the positive real axis. See Figure~\ref{NewImExStabilityRegions}.

We first focus on describing the set $\mathcal{D}_{-\infty}$, since by
definition the unconditional stability region $\mathcal{D}$ is a subset of
$\mathcal{D}_{-\infty}$, i.e. $\mathcal{D} \subseteq \mathcal{D}_{-\infty}$. 
However, we show later that this subset inclusion is in fact an equality, so
that $\mathcal{D} = \mathcal{D}_{-\infty}$.  Thus one should keep in mind that
statements characterizing $\mathcal{D}_{-\infty}$ are statements about
$\mathcal{D}$.  The main result regarding $\mathcal{D}_{-\infty}$ is
summarized by the following theorem.
\begin{theorem}\label{CharacterizationUStab} (The set $\mathcal{D}_{-\infty}$)
The set $\mathcal{D}_{-\infty}$ is simply connected, contains the origin
$\mu = 0$, and has a boundary parameterized by the curve
\begin{align}\label{ExactBoundary}
 &\partial \mathcal{D}_{-\infty} =
   \Big\{ \frac{(z-1 + \param)^r}{(z-1 + \param)^r-(z-1)^r} : |z| = 1, \;
   \mathrm{arg}\; z_0 \leq \mathrm{arg} \; z \leq \; 
   2\pi - \mathrm{arg}\; z_0 \Big\},\\  \nonumber
 &\textrm{where: } z_0 = 1, \hspace*{13.4em} \textrm{for order } r = 1,
  \textrm{and}\\ \label{ExactStart}
 &\phantom{\textrm{where: }} 
   z_0 = \frac{2-\param - 2(1-\param)\cos(\pi/r) 
   e^{\imath \pi/r}}{2-\param - 2\cos(\pi/r) e^{\imath \pi/r}}, 
   \quad \textrm{for orders } 2 \leq r \leq 5.
\end{align}
Moreover, let $m_r$ (resp. $m_l$) be the right-most (resp. left-most) point
of $\partial\mathcal{D}_{-\infty}$. Then $m_r$ (resp. $m_l$) is obtained at the
parameter value $z = z_0$ (resp. $z = -1$). Thus
\[ \begin{array}{lll}
 \mbox{for } r = 1, & m_l = \frac{-(2-\param)}{\param} &
    \mbox{and}\quad m_r = 1, \\
 \mbox{for } 2 \leq r \leq 5, & 
    m_l = \frac{-(2-\param)^r}{2^r-(2-\param)^r} &
    \mbox{and}\quad 
    m_r = \frac{(2-\param)^r}{(2-\param)^r + 2^r \cos^r(\pi/r)}.
\end{array} \]
Note that both $m_l$ and $m_r$ are on the real axis.
\end{theorem}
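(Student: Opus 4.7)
The plan is to characterize $\mathcal{D}_{-\infty}$ through the zeros of the polynomial
\[
  P_\mu(\zeta) := c(\zeta) - \mu b(\zeta) = (1-\mu)(\zeta-1+\param)^r + \mu(\zeta-1)^r,
\]
since by definition $\mu \in \mathcal{D}_{-\infty}$ iff every zero of $P_\mu$ satisfies $|\zeta|<1$. At $\mu = 0$ the unique root is $\zeta = 1-\param$ with multiplicity $r$, and $0 < \param \le 1$ gives $|1-\param|<1$; hence $0 \in \mathcal{D}_{-\infty}$. Because stability is an open condition and roots depend continuously on $\mu$, $\mathcal{D}_{-\infty}$ is open. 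Moreover $\deg b = r-1 < r = \deg c$, so as $|\mu|\to\infty$ one root of $P_\mu$ escapes to infinity, forcing $\mathcal{D}_{-\infty}$ to be bounded. Therefore $\partial \mathcal{D}_{-\infty}$ consists of those $\mu$ for which $P_\mu$ has at least one root on $|\zeta|=1$. Solving $P_\mu(\zeta)=0$ for $\mu$ gives the rational map $\phi(\zeta) := c(\zeta)/b(\zeta) = (\zeta-1+\param)^r/[(\zeta-1+\param)^r-(\zeta-1)^r]$, and hence $\partial \mathcal{D}_{-\infty} \subseteq \phi(\{|\zeta|=1\})$.

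To locate the real intersections of $\phi(\{|\zeta|=1\})$ with $\mathbb{R}$, I would exploit the conjugation symmetry $\phi(\bar\zeta)=\overline{\phi(\zeta)}$. Real values of $\phi$ on the unit circle occur at $\zeta = \pm 1$ or where $[(\zeta-1+\param)/(\zeta-1)]^r$ is real; the latter, restricted to $|\zeta|=1$, reduces to a linear equation in $\zeta$ whose non-trivial solutions form the conjugate pair $z_0, \bar z_0$ given by (\ref{ExactStart}). A short computation from the explicit form yields
\[
  \frac{z_0-1+\param}{z_0-1} = \frac{(2-\param)\,e^{-\imath\pi/r}}{2\cos(\pi/r)},
\]
from which both $|z_0|=1$ and $\phi(z_0) = (2-\param)^r/[(2-\param)^r + 2^r\cos^r(\pi/r)] = m_r$ follow by direct algebra. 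Similarly $\phi(-1) = -(2-\param)^r/[2^r - (2-\param)^r] = m_l$ for $r\ge 2$, while the $r=1$ case is immediate since $z_0=1$ gives $\phi(1)=1=m_r$ and $\phi(-1)=-(2-\param)/\param = m_l$.

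The main obstacle is proving that the parameterization in (\ref{ExactBoundary}) uses only the arc $\arg z_0 \le \arg\zeta \le 2\pi - \arg z_0$, and that $\mathcal{D}_{-\infty}$ is simply connected. The strategy is an argument-principle / root-tracking argument: starting at $\mu=0$ (all $r$ roots of $P_\mu$ inside $|\zeta|<1$) and moving $\mu$ outward across $\phi(\{|\zeta|=1\})$, count how many roots of $P_\mu$ cross $|\zeta|=1$. On the long arc from $z_0$ to $\bar z_0$, exactly one root crosses outward and $\mu$ leaves $\mathcal{D}_{-\infty}$, so this arc traces $\partial\mathcal{D}_{-\infty}$. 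On the short arc through $\zeta=1$, the corresponding root has already exited the unit disk, so this arc lies in the complement of $\mathcal{D}_{-\infty}$ and contributes no boundary. Rigorously establishing this for each $r\in\{1,\ldots,5\}$ and all $\param\in(0,1]$ reduces to a monotonicity analysis of $\theta \mapsto \arg\phi(e^{\imath\theta})$ on the long arc; it is precisely the statement whose numerical analog is flagged as failing at $r=6$ (Numerical Observation~\ref{NumObservation}), which explains the restriction on $r$. Once this is in hand, $\mathcal{D}_{-\infty}$ is identified as the bounded Jordan region enclosed by the long arc, and simple connectedness follows immediately.
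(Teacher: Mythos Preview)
Your approach via the boundary locus $\phi(\{|\zeta|=1\})$ and root tracking is genuinely different from the paper's, and your computations of $z_0$, $m_r$, and $m_l$ are correct. The paper instead solves $P_\mu(\zeta)=0$ \emph{explicitly}: writing $\varphi(\mu)=(\mu/(\mu-1))^{1/r}$, the roots are $z_j(\mu)=1+\param/(\xi_j\varphi(\mu)-1)$ with $\xi_j=e^{2\pi\imath j/r}$, and a short calculation shows $|z_j|<1\iff\mathrm{Re}(\xi_j\varphi(\mu))<1-\param/2$. The crucial observation is that on the wedge $|\arg\varphi(\mu)|\le\pi/r$ the $j=0$ inequality implies the others, so $\mathcal{D}_{-\infty}=\varphi^{-1}(\mathcal{T})$ for a triangle (strip if $r=2$) $\mathcal{T}$. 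Simple connectedness, the boundary parameterization, and the arc restriction then drop out of the conformal geometry of $\varphi$, with no root-counting needed. For the extremality of $m_l,m_r$ the paper does not argue via ``real points of the curve'' but shows $\mathcal{T}^r$ lies inside an explicit disk (half-plane for $r=2$) with diameter $[z_e^r,z_m^r]$ and invokes M\"obius properties; this requires verifying a polynomial inequality $h_r(\theta)\ge0$, done case-by-case for $r=3,4,5$.

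Your proposal has a genuine gap and a factual error. The gap: the ``monotonicity analysis of $\theta\mapsto\arg\phi(e^{\imath\theta})$ on the long arc'' is never carried out, yet it is precisely the content of the theorem---without it you have neither simple connectedness, nor the arc restriction, nor extremality of $m_l,m_r$ (merely finding where $\phi$ is real on $|\zeta|=1$ does not show those values are extremal until you know the boundary is a simple closed curve). The factual error: you identify this missing step with Numerical Observation~\ref{NumObservation}. That observation concerns $G(\param)>0$ and is used only in Proposition~\ref{NestedSets} to show $\mathcal{D}=\mathcal{D}_{-\infty}$; it plays no role in Theorem~\ref{CharacterizationUStab}, whose proof in the paper is fully rigorous. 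The restriction $r\le5$ in the theorem enters through the explicit verification of the $h_r$ inequalities, not through any numerical assumption.
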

%
\begin{proof}
 For $r = 1$ the proof is straightforward as $\partial \mathcal{D}_{-\infty}$
 is a circle for all $0 < \param \leq 1$. The idea for the proof when
 $2 \leq r \leq 5$ is to show that 
 $\mathcal{D}_{-\infty} = \varphi^{-1}(\mathcal{T})$ is the preimage of a set
 $\mathcal{T}$ (which is a triangle for $r \geq 3$ and a strip for $r = 2$)	
 under the mapping of a complex function $\varphi(z)$.  
 The results in the theorem then follow from basic calculus arguments, and
 the conformal properties of complex mappings. 
	
 The set $\mathcal{D}_{-\infty}$ consists of the values $\mu \in \mathbb{C}$ 
 that ensure that the solutions $z\in \mathbb{C}$ to the following polynomial
 equation are stable (see Definition~\ref{Def_Stability}):
 \begin{align}\label{Simplified_Polynomial}
  c(z) - \mu b(z) = 0 \quad \Longleftrightarrow \quad 
  (z - 1 + \param)^r - \mu \Big( (z-1+\param)^r - (z - 1)^r \Big) = 0.
 \end{align}
 Note that $0 \in \mathcal{D}_\infty$, since $c(z)$ has a single root:
 $z=1-\delta$ (with multiplicity $r$). 
 As a direct result of the simple structure of the polynomials $c(z)$ and
 $b(z)$, the equation \eqref{Simplified_Polynomial} can be solved explicitly to write the
 solutions $z_j(\mu)$ (for $0 \leq j \leq r-1$) in terms of $\mu$ as:
 \begin{align}\label{z_roots}
  z_j(\mu) = 1 + \frac{\param}{ \rzeta_j \varphi(\mu) - 1}, 
  \quad \textrm{where}
  \quad \rzeta_j = e^{\frac{\imath 2\pi j}{r}}, \quad 0 \leq j \leq r-1.
 \end{align}
 Here $\varphi(\mu)$ is the complex-valued function defined using a branch cut
 taken along the negative real axis:
 \begin{align}
  \varphi(\mu) := \Big( \frac{\mu}{\mu - 1} \Big)^{1/r}, \quad \textrm{where } 
  \big( R e^{\imath \theta} \big)^{1/r} := R^{1/r} e^{\frac{\imath \theta}{r}}, 
  \quad (-\pi < \theta \leq \pi, \; R \geq 0).
 \end{align}
 Observe that $\varphi(\mu)$ is the composition of a M\"{o}bius transformation
 (which has the property that it is a one-to-one mapping of the compactified
 complex plane to itself, with the identification that the point
 $1 \rightarrow \infty$ and $\infty \rightarrow 1$), with the $r$-th root
 function. Hence, $\varphi(\mu) : \mathbb{C} \rightarrow \mathcal{W}$ where
 \[
  \mathcal{W} = \Big\{ z \in \mathbb{C} : z = 0, \textrm{ or }
   -\frac{\pi}{r} < \mathrm{arg}z \leq \frac{\pi}{r} \Big\}. 
 \]
 Next, we note that the modulus constraints $|z_j| < 1$ restrict the range
 of $\varphi(\mu)$ to the intersection of $r$ half-planes given by the
 following inequalities:
 \begin{align} \label{Inequality}
  \Big|1 + \frac{\param}{\rzeta_j \varphi(\mu) - 1}\Big| < 1
    \quad \Longleftrightarrow  \quad
  \mathrm{Re}(\rzeta_j \varphi(\mu) ) < 1 - \frac{\param}{2}.
 \end{align}
 Clearly, the inequality \eqref{Inequality} must be satisfied by all roots
 $0 \leq j \leq r- 1$.  Satisfying the inequality \eqref{Inequality} for
 $j = 0$, however, will automatically guarantee the satisfaction of the
 remaining $1 \leq j \leq r-1$ inequalities. To make this correspondence
 precise, we introduce the set $\mathcal{T}$ (which is a triangle for
 $r \geq 3$, a strip for $r = 2$ and half-plane	for $r = 1$), obtained by
 taking the intersection of $\mathcal{W}$ with the $j = 0$ inequality in
 \eqref{Inequality},
 \begin{align}
  \mathcal{T} = \Big\{ z \in \mathcal{W} : 
  \mathrm{Re}(z) < 1 - \frac{\param}{2} \Big\}
 \end{align}
 Figure \ref{calT_Gpos} (left) shows the triangle $\mathcal{T}$, as well as
 the rotated triangles $\rzeta_j \mathcal{T}$, for $r = 3$. A simple use of
 inequalities,\footnote{Specifically: if $w = R e^{\imath \theta}$ with
    $R < (1 - \param/2) \sec(\theta)$ so that
    $\textrm{Re}(w) < 1 - \param/2$, then 
    $\mathrm{Re}(\rzeta_j w) = R \cos(\theta + 2\pi j /r) < (1 -\param/2)$,
    since $\cos(\theta + 2\pi j /r ) \leq \cos(\theta)$ for
    $|\theta| \geq \pi/r$.}
 whose geometric interpretation is highlighted in Figure~\ref{calT_Gpos}
 (left), shows that if $w \in \mathcal{T}$, then
 $\textrm{Re}(\rzeta_j w) < 1 - \frac{\param}{2}$. Hence, if
 $\varphi(\mu) \in \mathcal{T}$, then $\mu \in \mathcal{D}_{-\infty}$. That is:
 $\mathcal{D}_{-\infty} = \varphi^{-1}(\mathcal{T})$ is the preimage of
 $\mathcal{T}$ under the mapping $\varphi(z)$.
 The sets $\mathcal{D}_{-\infty}$, for the parameter value $\param = 1$
      and orders $1\leq r \leq 3$, are shown in
      Figure~\ref{BoundaryLocus}.

 The properties of $\mathcal{D}_{-\infty}$ now follow by observing that the set
 $\varphi^{-1}(\mathcal{T})=M(\mathcal{T}^r)$ is the image under the M\"{o}bius
 transformation $M(z) = z/(z-1)$ of the set $\mathcal{T}^r$, where
 $\mathcal{T}^r = \{ z^r : z \in \mathcal{T}\}$ is the $r$-th power of
 $\mathcal{T}$. 
 %
 %
 Below, we will use the following simple properties (chapter 3,
      \cite{Ahlfors1979}) of the M\"{o}bius transformation $M(z)$ in the
      Riemann sphere, with the understanding that
      $M(1) = \infty$ and $M(\infty) = 1$.
 \begin{enumerate}
  \item[M1.] The real axis is invariant under $M(z)$.
  \item[M2.] If $D$ is a closed disk centered on the real axis, with
    $\textrm{Re}(D) < 1$, then $M(D)$ is also a disk centered on the real axis
    with $\mathrm{Re}(M(D) ) < 1$.
  \item[M3.] The half-plane $\mathrm{Re}(z) \leq 1$ is invariant under $M(z)$.
    Any half-plane $\mathrm{Re}(z) \leq\alpha < 1$ ($\alpha\in\mathbb{R}$) is
    mapped to a disk $D$ with center on the real axis and $\mathrm{Re}(D)< 1$.
  \item[M4.] $M$ is a continuous map on the Riemann sphere, and
    $M=M^{-1}$.
 \end{enumerate}
 Note that $\mathcal{D}_{-\infty} =\varphi^{-1}(\mathcal{T}) =M(\mathcal{T}^r)$
 is simply connected, since $M$ is continuous and $\mathcal{T}^r$ is simply
 connected.
 To obtain the formula for the boundary $\partial\mathcal{D}_{-\infty}$, we
 observe that the line segments $\theta = \pm \pi/r$ on $\partial \mathcal{T}$ are
 mapped (under the $r$-th power, $\mathcal{T} \rightarrow \mathcal{T}^r$) to
 to identical line segments
 along the negative real axis. Further, these segments are contained
 in the interior of $\mathcal{T}^r$.
 Hence the boundary of $\mathcal{T}^r$, and subsequently the boundary
 $\partial\mathcal{D}_{-\infty}=\varphi^{-1}( \ell_r)$,  is the preimage of the
 line or line segment which is the right side of $\mathcal{T}$. Here $\ell_r$ is defined
 as:
 \begin{align*}
   &\textrm{For } r = 2: \quad \ell_2 =
      \Big\{ \mathrm{Re}(z) = 1 - \param/2 \Big\}, \\
   &\textrm{For } r \geq 3: \quad \ell_r =
      \Big\{(1-\tau)\bar{z}_{e} + \tau z_{e} : \; 0 < \tau \leq 1, \;
      z_{e} = (1-\param/2) \sec(\pi/r) e^{\imath \pi/r} \Big\}.
 \end{align*}
 Substituting $\varphi(\ell_r)$ into \eqref{z_roots} for $j = 0$, yields the
 root locus parameterization of the boundary $\partial\mathcal{D}_{-\infty}$
 stated in the theorem. The value $z_0$ in the theorem statement corresponds
 to substituting the endpoint $\bar{z}_e$ of $\ell_r$ for 
 $\mu =\varphi^{-1}(z_e)$ into
 the formula for $z_0(\mu)$ in \eqref{z_roots}
 \[  
  z_0 = \frac{2-\param - 2(1-\param)\cos(\pi/r) e^{\imath \pi/r}}{
              2-\param - 2\cos(\pi/r) e^{\imath \pi/r}}, 
        \quad \textrm{for } 2 \leq r \leq 5. 
 \]
 In the above expression, and for our subsequent calculations below, it is
 understood that for $r = 2$, $z_e$ is taken as
 $z_e = (1 - \param/2) + \imath \infty$. 	

	Lastly, to verify the result for the right and left-most 
	endpoints of $\partial \mathcal{D}_{-\infty}$, our goal is to 
	show that $\mathcal{T}^r$ is contained in a suitably chosen
	disk ($r\geq 3$) or half-plane ($r = 2$) and to use 
	properties (M1--M3).
	First denote the midpoint 
	of $\ell_r$ as $z_m = (1-\param/2)$.	
	Then the only values of $\partial\mathcal{T}^r$
	along the real axis are $z_m^r$ and $z_e^r$. Hence by
	property (M1),	$m_l := \varphi^{-1}(z_m)$, and 
	$m_r := \varphi^{-1}(z_e)$ are the only values of 
	$\partial \mathcal{D}_{-\infty}$ along the real axis. 
	To show	that $m_l$ and $m_r$ are the left-most and right-most
	points of $\mathcal{D}_{-\infty}$ for $r = 2$, note that
	$\mathcal{T}^r$ is contained within the half-plane 
	$\mathrm{Re}(z) \leq z_m^2$, and contains the point along the 
	negative real axis $-\infty \in \mathcal{T}^r$. Hence, by 
	property (M3), $m_r = 1$ is the rightmost point, and by combining
	property (M1) and (M3), $m_l$ is the left-most point 
	of $\partial \mathcal{D}_{-\infty}$.
 For $r \geq 3$, it is sufficient to show that $\mathcal{T}^r$ is contained
 in the disk $D = \{z \in \mathbb{C} : |z-z_d| \leq R_d\}$ centered at 
 $z_d =\frac{1}{2}(z_e^r+z_m^r)$ with a radius $R_d =\frac{1}{2}(z_m^r-z_e^r)$,
 and right and left endpoints $z_m^r$ and $z_e^r$, respectively. This is
 because properties (M1) and (M2) imply that $m_r=M(z_m^r)$ and $m_l=M(z_e^r)$
 will be preserved as the right and left-most points of
 $\partial \mathcal{D}_{-\infty}$ under the transformation $M(z)$. To show
 $\mathcal{T}^r \subseteq D$, write the boundaries $\partial \mathcal{T}^r$
 and $\partial D$ in polar coordinates $r\,e^{\imath\theta}$, with
 $r = f(\theta)$ and  $r = g(\theta)$ respectively.
 Then, with $\beta_r = \sec^r(\pi/r)$,
 \begin{align*}
  f(\theta) &= (1-\param/2)^r \sec^r(\theta/r) , \quad \mbox{and}\\
  g(\theta) &= (1-\param/2)^r \Big(\frac{1}{2}(1-\beta_r) \cos(\theta) +
               \sqrt{\beta_r + \Big(\frac{1}{2}(1-\beta_r) \cos(\theta)
               \Big)^2} \Big),
 \end{align*}
 By symmetry across the real axis, it is sufficient to show that
 $f(\theta) \leq g(\theta)$ for $0 \leq \theta \leq \pi$.  This is true
 (i.e. after manipulating $f(\theta) \leq g(\theta)$), provided that the
 following inequality holds for $0 \leq \theta \leq \pi$,
 \begin{align*}
  h_r(\theta) := \beta_r - \sec^{2r}(\theta/r) 
  + \sec^r(\theta/r) \cos(\theta) \big(1 - \beta_r \big) \geq 0.
 \end{align*}
 Expanding $\cos(\theta)$ in powers of $\cos(\theta/r)$ via the binomial
 series, a direct computation of $h_r(\theta)$ (on $0 \leq \theta \leq \pi$)
 yields
 \begin{align*}
  h_3(\theta) &=
     \big(\sec^2(\theta/3) - 1\big) \big(4-\sec^2(\theta/3)\big)
     \big(5+\sec^2(\theta/3)\big) \geq 0\/,\\
  h_4(\theta) &=
     \big(\sec^2(\theta/4) - 1\big) \big(2 - \sec^2(\theta/4)\big)
     \big(10 + 3\sec^2(\theta/4)+ \sec^4(\theta/4) \big) \geq 0\/.
 \end{align*}
 For $h_5(\theta)$ we write:
 \begin{align*}
  h_5(\theta) &=
    \big(\sec^2(\theta/5) - 1\big)\,\tilde{h}_5\big(\sec^2(\theta/5)\big),\\
  \textrm{where} \quad \tilde{h}_5(x) &=
    -x^4 - x^3 - x^2 - (5\beta_5 - 4)x - 16 + 15\beta_5\/.
 \end{align*}
 We claim now that $\tilde{h}_5(x) \geq 0$ for $1 \leq x \leq \sec^2(\pi/5)$.
 For this, note that $\beta_5 > \sec^4(\pi/6) = 16/9$ which shows that
 $\tilde{h}_5(1) > 10(16/9 - 3/2) > 0$. By construction, we also know that
 the boundary $\partial \mathcal{T}^5$ and $D$ touch at $\theta = \pi$, which
 implies $f(\pi) = g(\pi)$. This can then be used to show that
 $\tilde{h}_5(\sec^2(\pi/5)) = 0$. Finally, applying Descartes' rule of signs
 to the derivative $\tilde{h}_5'(x)$ shows that $\tilde{h}_5'(x)$ has no roots
 for $x > 0$. Hence, $\tilde{h}_5(x)$ is decreasing, and thus
 $\tilde{h}_5(x) \geq 0$ on $1 \leq x \leq \sec^2(\pi/5)$.
\end{proof}
%
%
%
%
\begin{figure}
\begin{minipage}{0.65\textwidth}
 \includegraphics[width=0.53\textwidth]{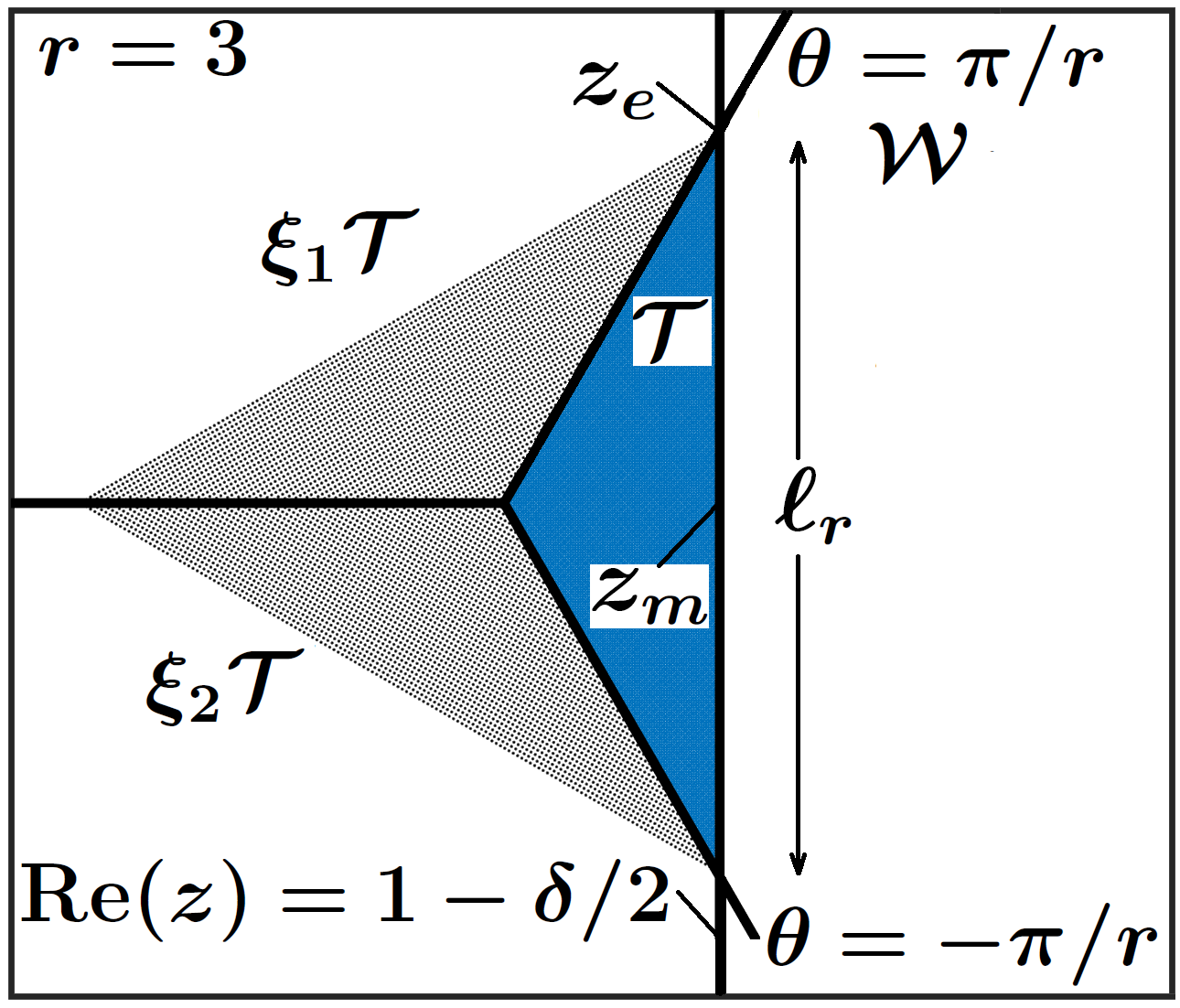}
 \hfill
 \parbox[b]{0.44\textwidth}{\small The set $\mathcal{T}$
  (darker shaded region) in relation to $\mathcal{W}$, for $r = 3$. The
  rotated sets $\rzeta_j \mathcal{T}$ (lighter shaded regions) satisfy the
  constraint inequality in equation \eqref{Inequality},
  $\mathrm{Re}( \rzeta_j \mathcal{T} ) < 1 - \param/2$. The set
  $\mathcal{D}_{-\infty}$ is given by 
  $\mathcal{D}_{-\infty} = \varphi^{-1}(\mathcal{T})$.\\\rule{0mm}{0mm}}
\end{minipage}
\hfill
\begin{minipage}{0.31\textwidth}
 \includegraphics[width = 0.98\textwidth]{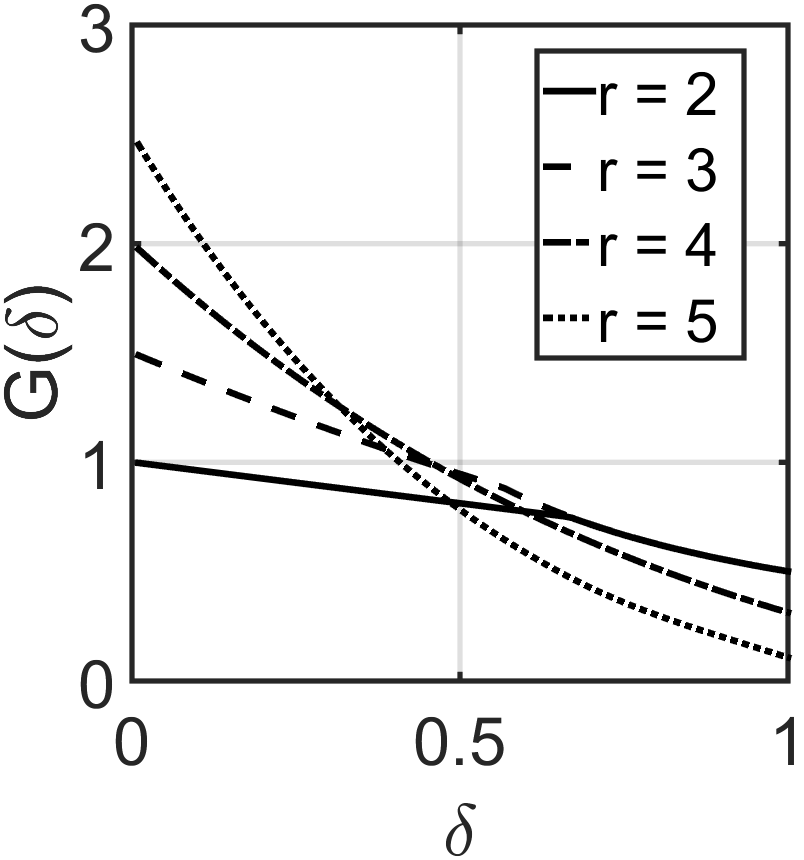}
\end{minipage}
\caption{Left: the set $\mathcal{T}$.\; Right: plot of $G(\param)$, as
 defined by equation \eqref{GFunction}.}\vspace*{-1.25em} \label{calT_Gpos}
\end{figure}
%
\begin{figure}[htb!]
\centering
\includegraphics[width = 0.31\textwidth]{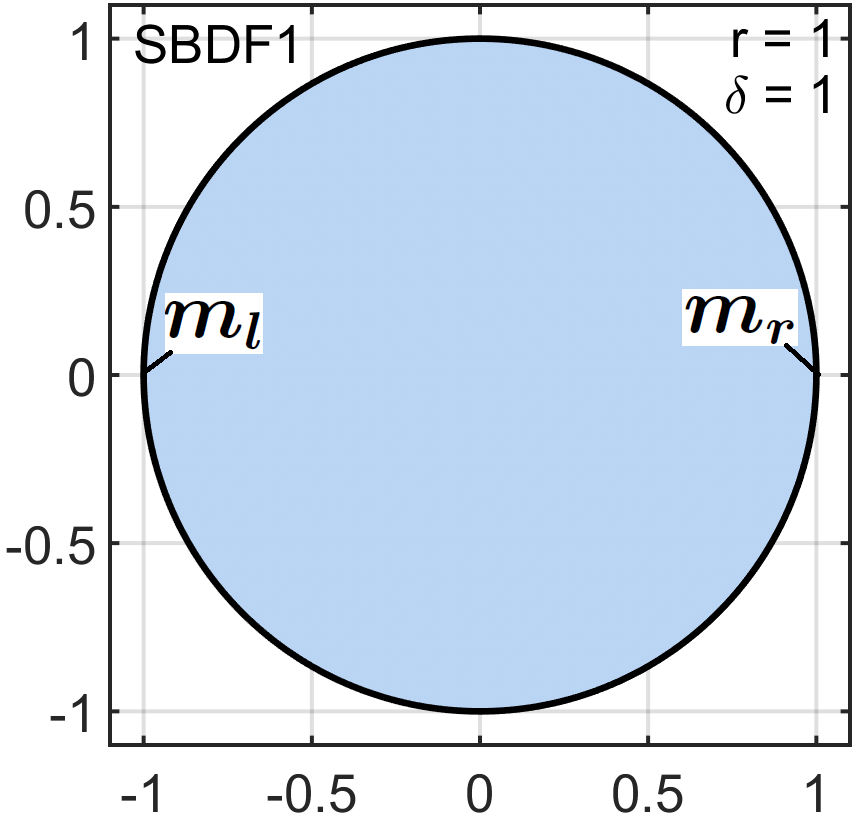} 
\includegraphics[width = 0.31\textwidth]{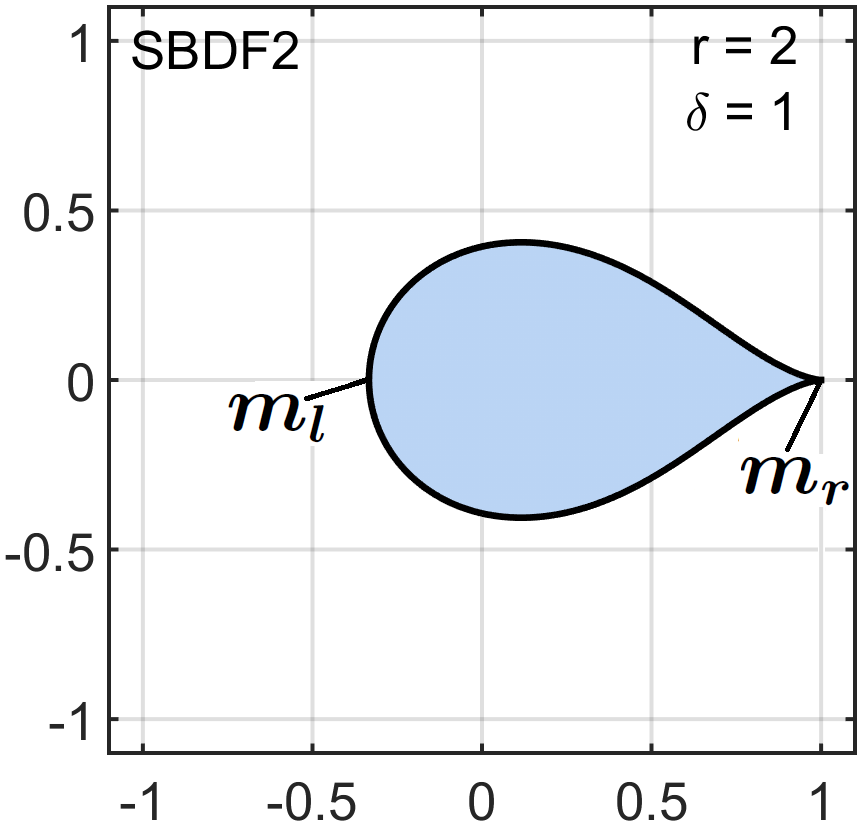} 
\includegraphics[width = 0.31\textwidth]{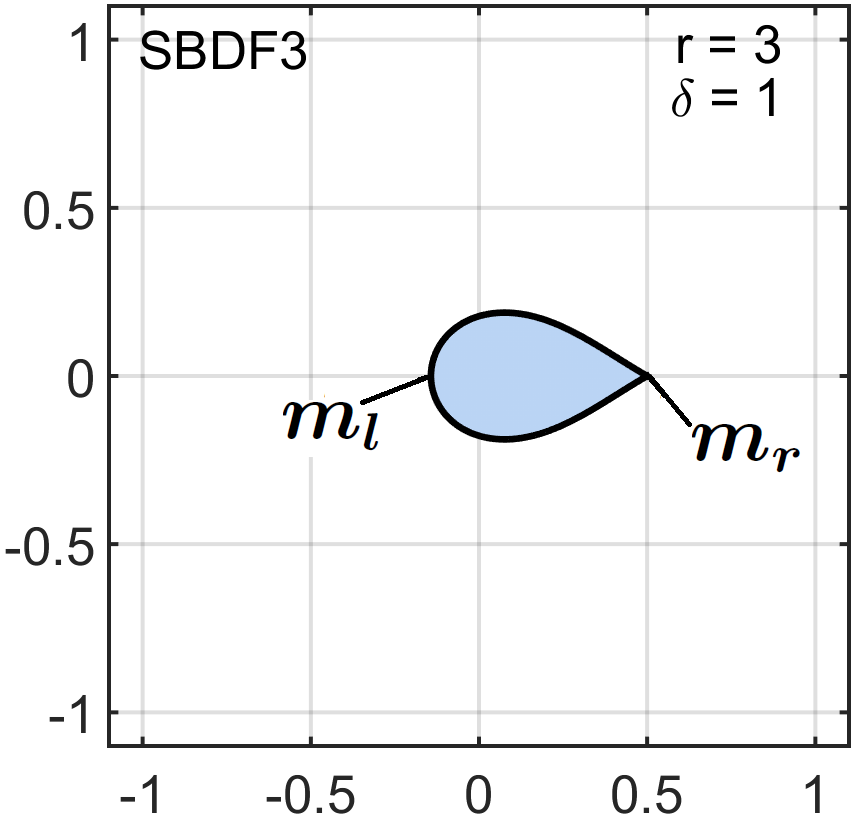} 
\caption{The sets $\mathcal{D}_{-\infty}$ (which by virtue of 
 Proposition~\ref{NestedSets} equal $\mathcal{D}$) are shown shaded. The
 parameters are: $\param = 1$ (SBDF schemes) and orders $r = 1,2,3$ (left
 to right). Formulas for the boundary are given by
 Theorem~\ref{CharacterizationUStab}.} 
 \vspace*{-1.25em}
 \label{BoundaryLocus}
\end{figure}
%
   Figure~\ref{BoundaryLocus} illustrates
   Theorem~\ref{CharacterizationUStab} by plotting the sets
   $\mathcal{D}_{-\infty}$ for the well-known SBDF schemes.
Using the characterization of $\mathcal{D}_{-\infty}$ in 
Theorem~\ref{CharacterizationUStab}, we are now in a position to show that not
only is $\mathcal{D} \subseteq \mathcal{D}_{-\infty}$, but that this inclusion
is also an equality: $\mathcal{D} = \mathcal{D}_{-\infty}$.

To first illustrate that $\mathcal{D} = \mathcal{D}_{-\infty}$, in
Figure~\ref{Nested_stability_regions} we plot $\mathcal{D}_y$ for different
values of $y$, using the \emph{boundary locus} (chapter 7.6,
\cite{LeVeque2007}) method. Specifically, $\mathcal{D}_y$ is a region whose
boundary is a subset of the locus
\begin{align}\label{Gamma_y}
 \vGamma_y := \Big\{ \frac{1}{b(z)}\big( c(z) - y^{-1} a(z) \big): 
   |z| = 1 \Big\}, \quad
 \vGamma_{-\infty} := \Big\{\frac{c(z)}{b(z)} : |z| = 1 \Big\}.
\end{align}
Equation (\ref{Gamma_y}) is obtained by isolating $\mu$ in equation
(\ref{Model_Eq}) and letting $z$ vary over the unit circle.
Figure~\ref{Nested_stability_regions} shows the nested stability regions 
$\mathcal{D}_y$ for orders $r= 3, 4, 5$ and fixed parameter value $\param=1$.
In the figure, the solid curve traces out $\vGamma_y$ corresponding to the
boundary locus for $\mathcal{D}_y$. The dashed curves show as a reference
$\vGamma_y$ for different $y$ values. Although the plots are only for one
value of $\param$, the limiting behavior $\mathcal{D} = \mathcal{D}_{-\infty}$
is observed for all $0 < \param \leq 1$.

We now show that the set equality $\mathcal{D} =\mathcal{D}_{-\infty}$  is a
direct consequence of the fact that the function $G(\param)$ (defined below
for the \imex schemes  in Definition~\ref{NewImExCoeff}) is positive. Note
that $G(\param)$, roughly speaking, is a measure of the distance of
$\vGamma_y$ to the set $\mathcal{D}_{-\infty}$ --- and it is the key to showing
that $\mathcal{D} = \mathcal{D}_{-\infty}$.
\begin{align}\label{GFunction}
 G(\param) := \inf_{y < 0} \min_{w \in \vGamma_{y}} \Big[ \Big(
   \mathrm{Re}\big( \varphi(w) \big) -
   (1 - \param/2) \Big) (1-y) \delta^{-2}  \Big].
\end{align}
This function may be numerically computed, which leads to:
%
\begin{numobservation}\label{NumObservation}
 Numerical computations (shown in Figure~\ref{calT_Gpos}, right) indicate
 that: for $0 < \param \leq 1$ and $2 \leq r \leq 5$, $G(\param) > 0$.
\end{numobservation}
%
This fact is introduced as an assumption below, in
Proposition~\ref{NestedSets}.

The positive factor $(1-y) \delta^{-2}$ in equation \eqref{GFunction}
is included to re-scale the difference between $\mathrm{Re}(\varphi(w))$ and
$(1-\param/2)$, which vanishes as $y\rightarrow -\infty$ or $\param \rightarrow 0$.
This re-scaling helps to visually verify that $G(\param)$ does not change
sign, even as $y\rightarrow -\infty$ or $\param \rightarrow 0$. To computationally
handle the infinite interval $-\infty < y < 0$, we introduce the change of
variables $\tilde{y} = (1 - y)^{-1}$, so that $0 < \tilde{y} < 1$.
For each fixed value of $\tilde{y}$, we parameterize $\vGamma_{y}$ as the
image of the unit circle, which then allows us to compute $G(\param)$ as
a double minimization over two real variables on bounded intervals.
%
\begin{proposition}\label{NestedSets} 
 (The set $\mathcal{D} = \mathcal{D}_{-\infty}$)
 (i) For $r = 1$ and $0 < \param \leq 1$, $\mathcal{D} = \mathcal{D}_{-\infty}$.
 (ii) For $2 \leq r \leq 5$, assume that: $G(\param) > 0$, $0 < \param \leq 1$.
 Then
 \begin{align}\label{NumericalTheorem}
  \mu \in \mathcal{D}_{-\infty} \quad\Longrightarrow\quad
  \mu \in \mathcal{D}_{y} \textrm{ for any } y \in \negNum. 
 \end{align}
 In other words, for every $y \in \negNum$ the set $\mathcal{D}_y$ contains 
 the limiting set $\mathcal{D}_{-\infty}$.  As a result, the unconditional
 stability region is $\mathcal{D} = \mathcal{D}_{-\infty}$.
\end{proposition}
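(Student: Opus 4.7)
My plan is to prove the equality by establishing $\mathcal{D}_{-\infty}\subseteq\mathcal{D}_y$ for every $y\in\negNum$; combined with the inclusion $\mathcal{D}\subseteq\mathcal{D}_{-\infty}$ that is immediate from Definition~\ref{Def_regionUstab}, this gives $\mathcal{D}=\mathcal{D}_{-\infty}$. For $r=1$, I would dispatch the claim by direct computation: Definition~\ref{NewImExCoeff} yields $a(z)=\param(z-1)$, $b(z)=\param$ and $c(z)=z-1+\param$, so the single root of \eqref{Model_Eq} is the explicit rational function $\zeta(\mu,y)=1+y\param(1-\mu)/(\param-y)$, and the condition $|\zeta|<1$ for every $y<0$ can be reduced, by an elementary modulus computation, to $\mu$ lying in the disk described by Theorem~\ref{CharacterizationUStab}.

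For $2\le r\le 5$, I would use a homotopy/argument-principle approach. First I would secure two anchor points: (i) $0\in\mathcal{D}_{-\infty}$, which is immediate because $c(z)=(z-1+\param)^r$ has its only root $1-\param\in(-1,1)$; and (ii) $0\in\mathcal{D}_y$ for every $y<0$, equivalent to A$(\alpha)$-stability (on the negative real axis) of the purely implicit recursion $a(\zeta)=yc(\zeta)$. For $\param=1$ this is the classical fact that BDF$_r$ is A$(\alpha)$-stable for $r\le 6$, and I would transfer it to $0<\param<1$ by a continuity argument in $\param$: zero-stability holds throughout by Proposition~\ref{Order_conditions}, and any root escape through $|\zeta|=1$ during the homotopy would force $\mu=0\in\vGamma_y(\param)$, which can be excluded by an explicit examination of $\vGamma_y$.

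The core of the argument is then to extract the geometric consequence of the hypothesis $G(\param)>0$. By Theorem~\ref{CharacterizationUStab}, $\mathcal{D}_{-\infty}=\varphi^{-1}(\mathcal{T})=\{\mu:\mathrm{Re}(\varphi(\mu))<1-\param/2\}$, using that $\varphi$ takes values in $\mathcal{W}$. The definition \eqref{GFunction} then gives, for every $y<0$ and every $w\in\vGamma_y$, the inequality $\mathrm{Re}(\varphi(w))\ge 1-\param/2+G(\param)\param^{2}/(1-y)>1-\param/2$, so $w\notin\mathcal{D}_{-\infty}$; equivalently $\vGamma_y\cap\mathcal{D}_{-\infty}=\emptyset$. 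Since the roots $\zeta$ of \eqref{Model_Eq} depend continuously on $\mu$ and crossings of $|\zeta|=1$ occur only when $\mu\in\vGamma_y$, the number of roots inside the open unit disk is locally constant on $\mathbb{C}\setminus\vGamma_y$. The set $\mathcal{D}_{-\infty}$ is connected (Theorem~\ref{CharacterizationUStab}) and disjoint from $\vGamma_y$, so it lies in a single component of the complement; since $0$ belongs to both $\mathcal{D}_{-\infty}$ and $\mathcal{D}_y$, that component is the stable one, and $\mathcal{D}_{-\infty}\subseteq\mathcal{D}_y$.

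The step I expect to be the main obstacle is (ii), namely verifying $0\in\mathcal{D}_y$ uniformly in $y<0$ and $0<\param\le 1$. While shrinking $\param$ intuitively moves the roots of $c(z)$ deeper inside the unit disk, the coefficients of $a(z)$ from \eqref{NewImex_a} vary nontrivially with $\param$ via the order conditions, so ruling out a spurious unit-circle crossing during the $\param$-homotopy is not purely formal and will likely require a short direct examination of the root locus $\vGamma_y$ near $\mu=0$. Everything else in the proof is relatively soft topology, but this single continuity step is what keeps the argument from being entirely self-contained.
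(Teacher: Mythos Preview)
Your proposal is correct in outline and uses the same key ingredient as the paper---the hypothesis $G(\param)>0$ is unpacked in both arguments into the separation $\vGamma_y\cap\mathcal{D}_{-\infty}=\emptyset$ via the characterization $\mathcal{D}_{-\infty}=\varphi^{-1}(\mathcal{T})$---but the homotopy you run is in a different variable, and this is what creates the obstacle you flag.

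You fix $y$ and perform a continuity argument in $\mu$ along the connected set $\mathcal{D}_{-\infty}$, anchored at $\mu=0$; this forces you to verify $0\in\mathcal{D}_y$ for every $y<0$ and every $0<\param\le 1$, i.e., that the purely implicit recursion $a(\zeta)=y\,c(\zeta)$ has all roots strictly inside the unit disk. The paper instead fixes $\mu\in\mathcal{D}_{-\infty}$ and homotopes in $y$: with the substitution $\tilde{y}=(1-y)^{-1}$ it defines $P(z;\tilde{y})=c(z)-\mu b(z)+\tfrac{\tilde{y}}{1-\tilde{y}}a(z)$ and tracks the root count $F(\tilde{y})$ via the argument principle from $\tilde{y}=0$ (i.e., $y=-\infty$) to arbitrary $\tilde{y}\in(0,1)$. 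At $\tilde{y}=0$ the anchor $F(0)=r$ is free---it is exactly the hypothesis $\mu\in\mathcal{D}_{-\infty}$---and $F$ can change only at a value of $\tilde{y}$ where $\mu\in\vGamma_y$, which the separation rules out. Thus the paper's $y$-homotopy eliminates your step~(ii) entirely: no appeal to A$(\alpha)$-stability of the implicit scheme, no $\param$-continuation from the BDF case, and no examination of $\vGamma_y$ near $\mu=0$ is needed. What your route buys in return is a slightly more transparent picture at fixed $y$ (the stable component of $\mathbb{C}\setminus\vGamma_y$), but at the cost of an auxiliary fact the paper never has to touch.
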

%
\begin{proof} (Proposition~\ref{NestedSets})
 For (i), the proof is straightforward as $\mathcal{D}_y$ is a disk centered 
 at $1-(\param^{-1}-y^{-1})$ with radius $\param^{-1}-y^{-1}$. For (ii)
 the proof involves two steps. First, we use a standard continuity argument
 to show that if $\mu \in \mathcal{D}_{-\infty}$, but
 $\mu \notin \mathcal{D}_{y_0}$ for some $y_0 < 0$, then there is an
 intermediate $y$-value ($-\infty < y < y_0$) where $\mu$ must lie on the
 boundary locus	$\mu \in \vGamma_y$.  Next we show that $\vGamma_y$ is
 bounded away from $\mathcal{D}_{-\infty}$ when $y < 0$. It then follows that
 $\mu \in \mathcal{D}_{y}$ whenever $\mu \in \mathcal{D}_{-\infty}$.

 To proceed with the first step, we define the following polynomial function
 based on equation \eqref{Model_Eq}
 \begin{align}
  P(z; \tilde{y}) := c(z) - \mu b(z) + \frac{\tilde{y}}{1-\tilde{y}} a(z).
 \end{align}
 Here $y =1 - \tilde{y}^{-1}$, so that $0 < \tilde{y} < 1$
 (resp. $\tilde{y} = 0$) corresponds to $y < 0$ (resp. $y = -\infty$), which
 will be useful in the subsequent continuity argument.  
	To minimize additional notation,
	we will continue to use $\mathcal{D}_{y}$ and 
	$\vGamma_{y}$ as sets, and $\tilde{y}$ as the 
	parameter in the polynomials, with the understanding that 
	$y=1 - \tilde{y}^{-1}$.	
	Then $\mathcal{D}_{y}$ is defined as $\mu \in \mathbb{C}$
	such that $P(z;\tilde{y})$ has $r$ roots inside the unit circle,
	or alternatively: (i) $P(z; \tilde{y}) \neq 0$ on the unit circle
	$|z| = 1$, and (ii) the function $F(\tilde{y}) = r$, where
	$F(\tilde{y})$ counts the number of roots $|z| < 1$ via the Cauchy
	integral formula:
 \[
   F(\tilde{y}) := \frac{1}{2\pi \imath} 
   \oint_{|z| = 1} \frac{P_z(z; \tilde{y})}{P(z;\tilde{y})}\du z.
 \]
 Now, $F(\tilde{y})$ is continuous as a function of $\tilde{y}$, 
	and also a constant, as long as it is defined. 
	The only way $F(\tilde{y})$ may change values is if 
	$P(z;\tilde{y}) = 0$ vanishes for some $|z| = 1$ on the unit 
	circle, which implies $\mu \in \vGamma_{y}$. Hence, if for a 
	given $\mu$, $F(0) = r$ and $F(\tilde{y}_0) \neq r$,
	then there must exist a point $0 < \tilde{y} < \tilde{y}_0$
	such that $\mu \in \vGamma_{y}$.

 To show that $\vGamma_{ y}$ does not intersect $\mathcal{D}_{-\infty}$ for
 $0 < \tilde{y} < 1$, we exploit the fact that the mapping $\varphi(z)$,
 defined in Theorem~\ref{CharacterizationUStab}, simplifies the shape of 
 $\varphi(\mathcal{D}_{-\infty})=\mathcal{T}$. Specifically, $\varphi(z)$ is
 a one-to-one mapping of $\mathbb{C}$ to the wedge $\mathcal{W}$, so that it
 is sufficient to show that the mappings of $\mathcal{D}_{-\infty}$ and
 $\vGamma_{y}$ under $\varphi(z)$ do not intersect, i.e.
 $\varphi(\vGamma_{y})$ does not intersect $\mathcal{T}$, for
 $0 < \tilde{y} < 1$.  Since $\mathcal{T}$ is contained	within the half-plane
 $\mathrm{Re}(z) < 1 - \param/2$, we arrive at the following observation: if
 \begin{align}\label{Positive_Function}
  \mathrm{Re}\big( \varphi(w ) \big) - (1-\param/2) > 0,
  \quad \textrm{for all } 0 < \tilde{y} < 1, \; w \in \vGamma_{y},
 \end{align}
 then $\varphi(\vGamma_{y})$ and $\mathcal{T}$ do not intersect.
	Multiplying	the left-hand side of the 
	inequality \eqref{Positive_Function} by 
	the positive factor $\param^{-2} \tilde{y}^{-1}=\param^{-2}(1-y) > 0$, 
	and minimizing over $0 < \tilde{y} < 1, \; w \in \vGamma_{y}$, 
	yields the function $G(\param)$.  Hence, we arrive at the 
	conclusion that $\varphi(\vGamma_{y})$ and $\mathcal{T}$ 
	do not intersect whenever $G(\param) > 0$, which together
	with the first step of the proof, implies
	$D_{-\infty} \subseteq \mathcal{D}_y$ for all $y < 0$.
\end{proof}
%
%
\begin{figure}[htb!]
\includegraphics[width = 0.32\textwidth]{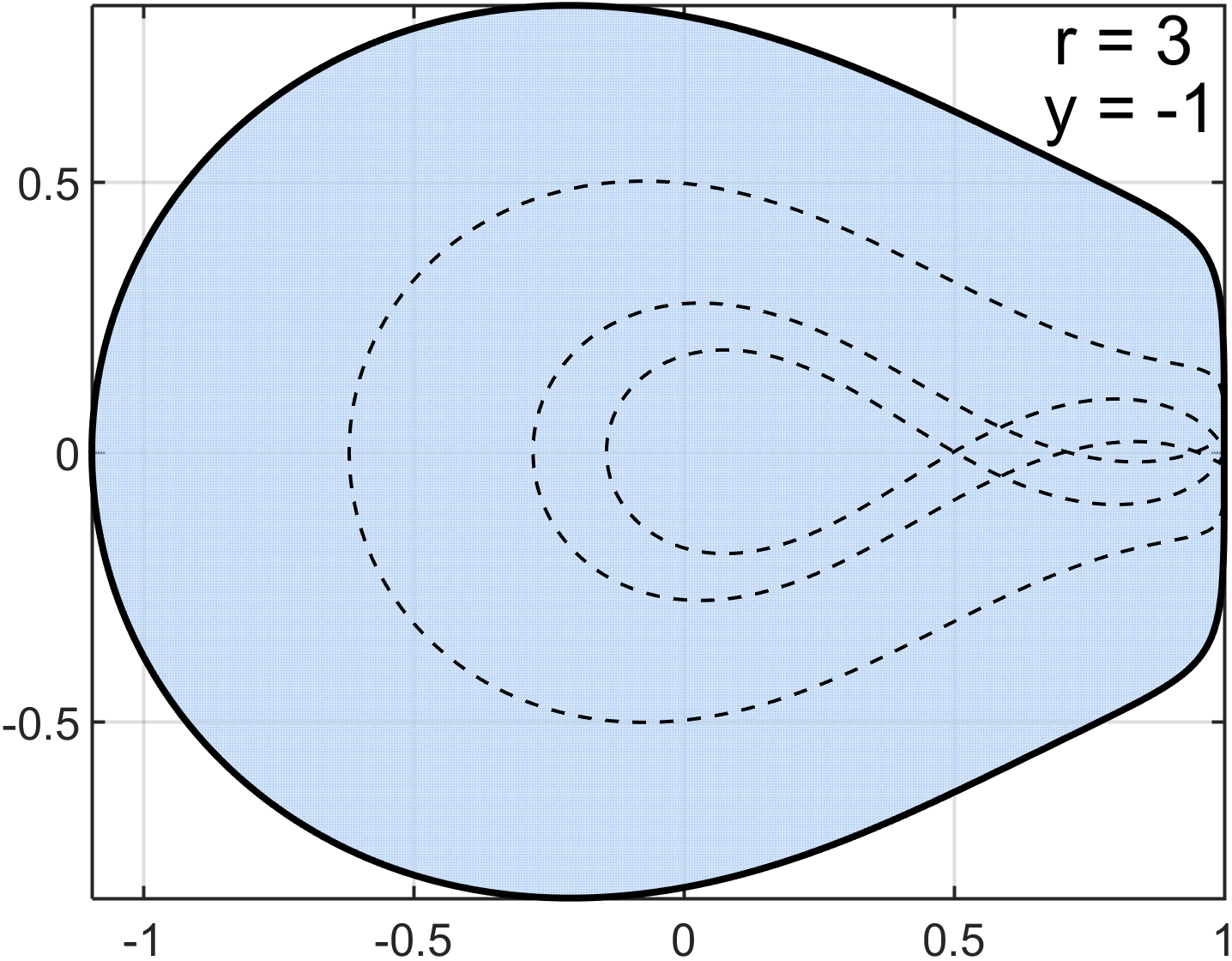} \hfill
\includegraphics[width = 0.32\textwidth]{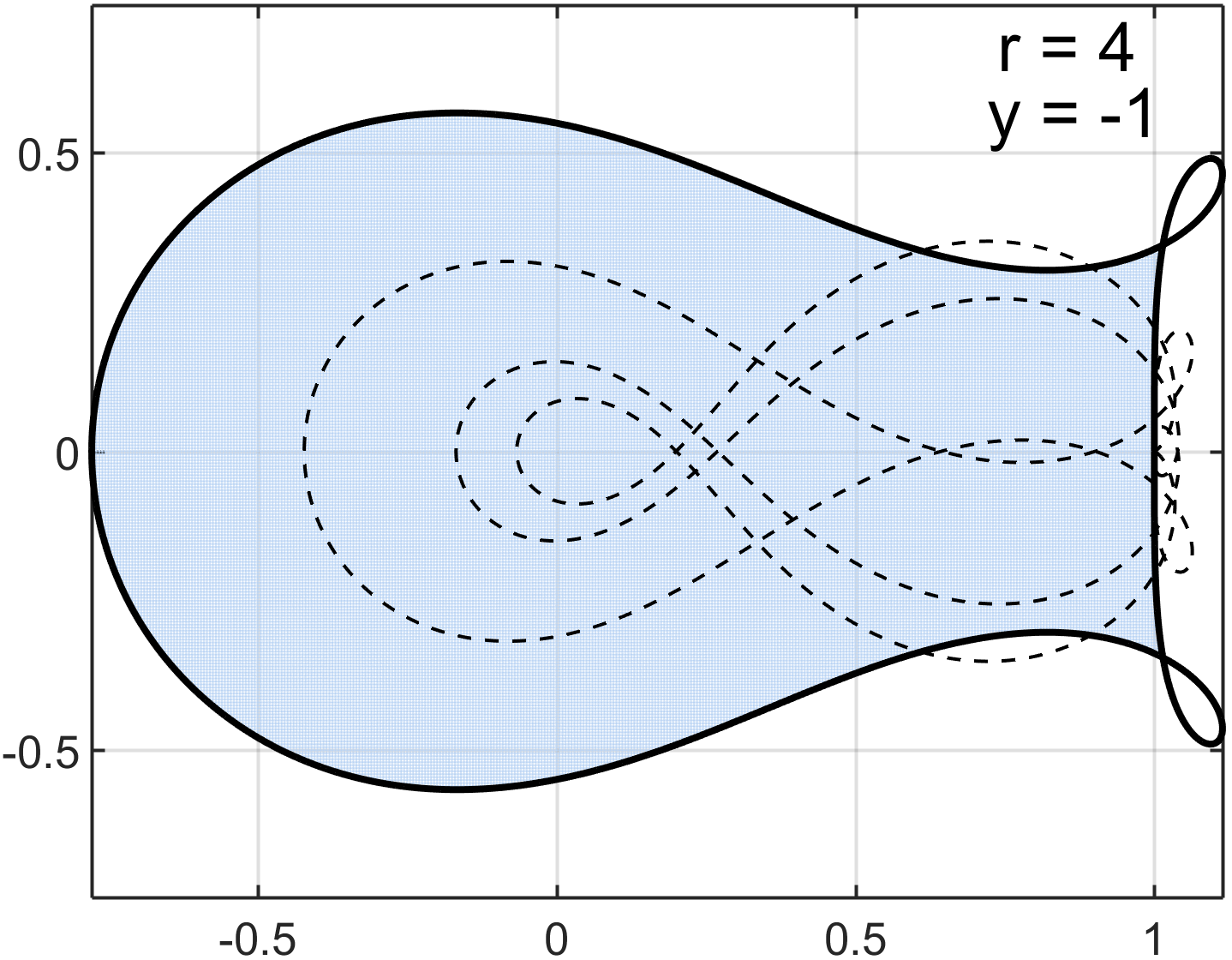} \hfill
\includegraphics[width = 0.32\textwidth]{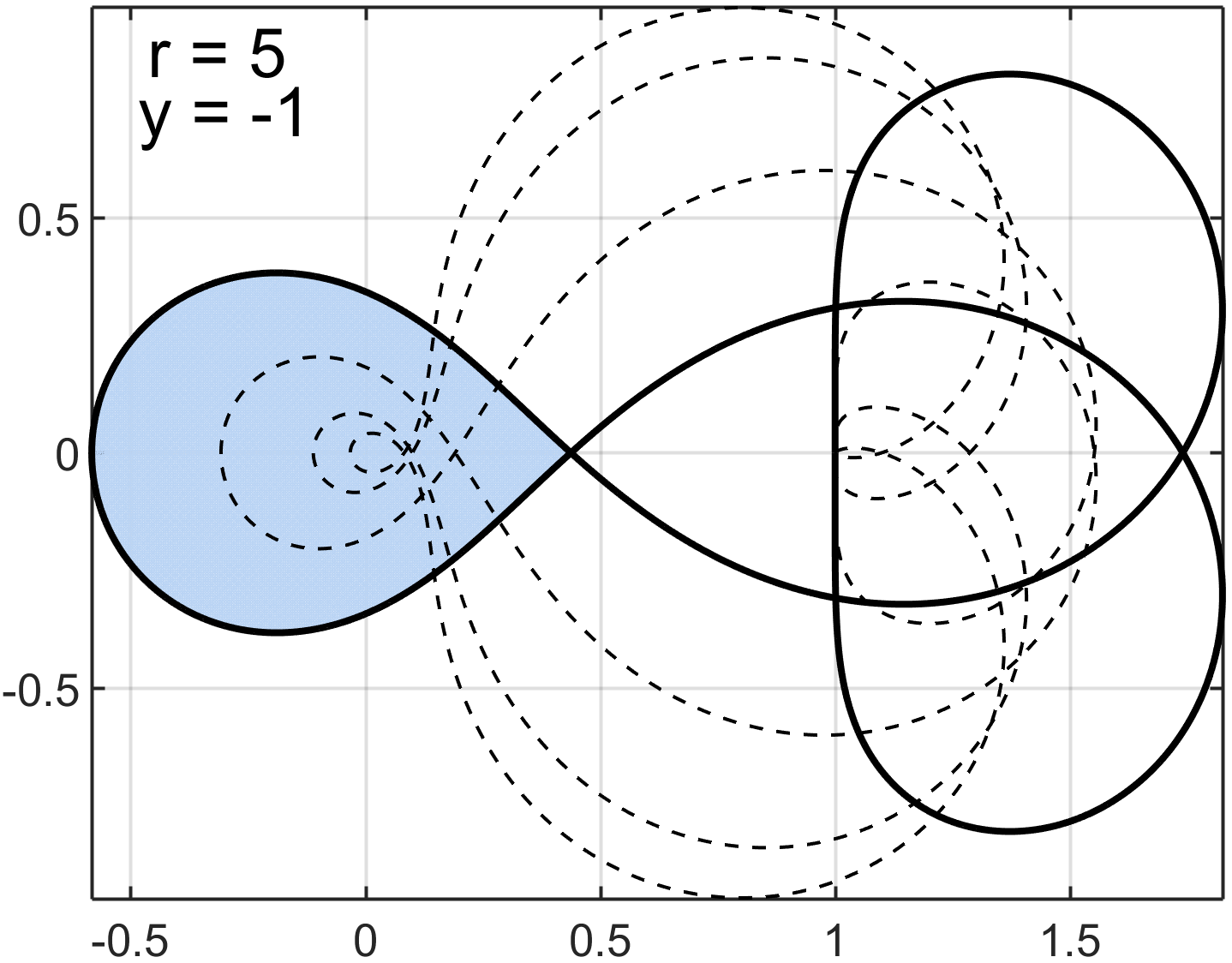}

\vspace*{1em}
\includegraphics[width = 0.32\textwidth]{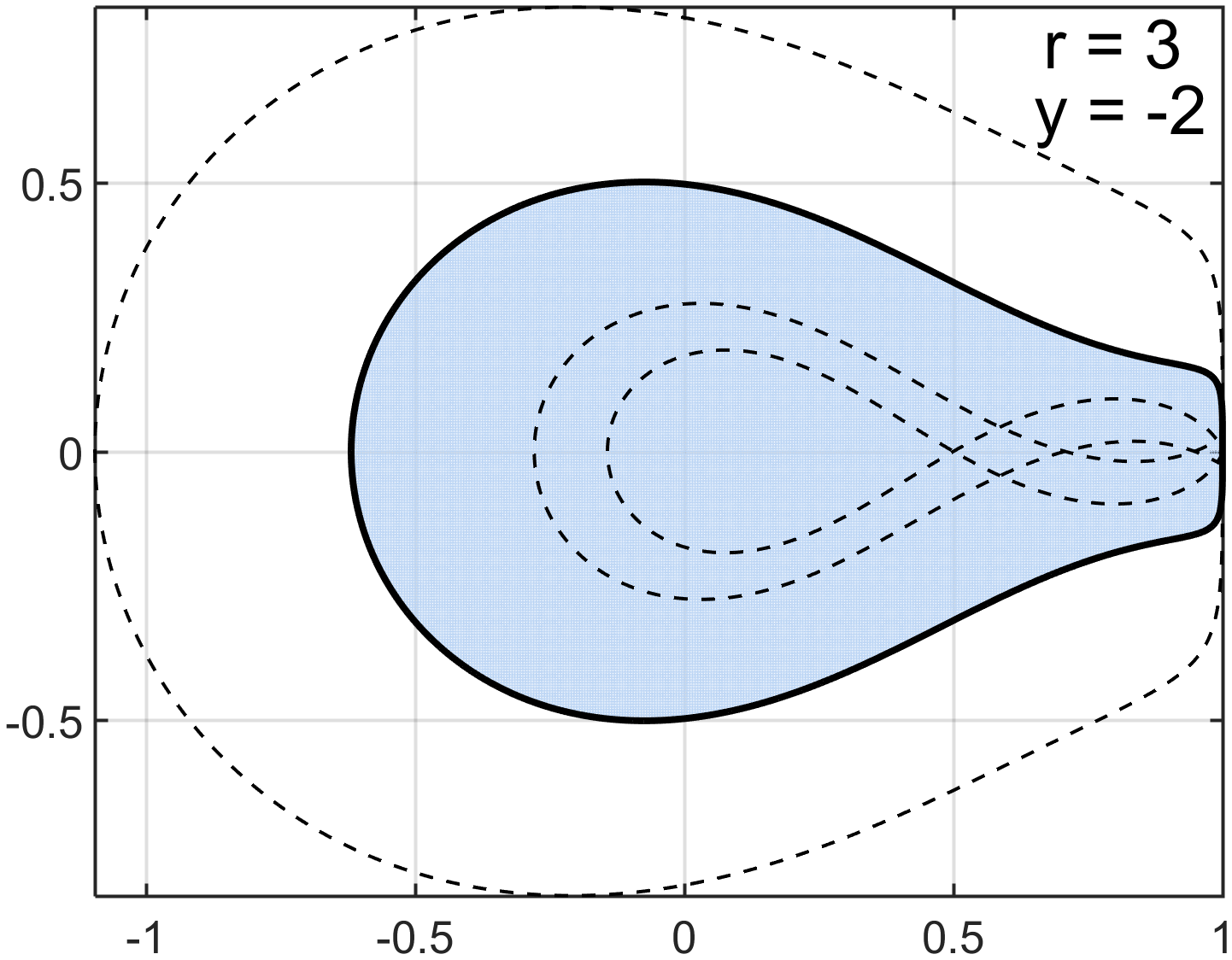} \hfill
\includegraphics[width = 0.32\textwidth]{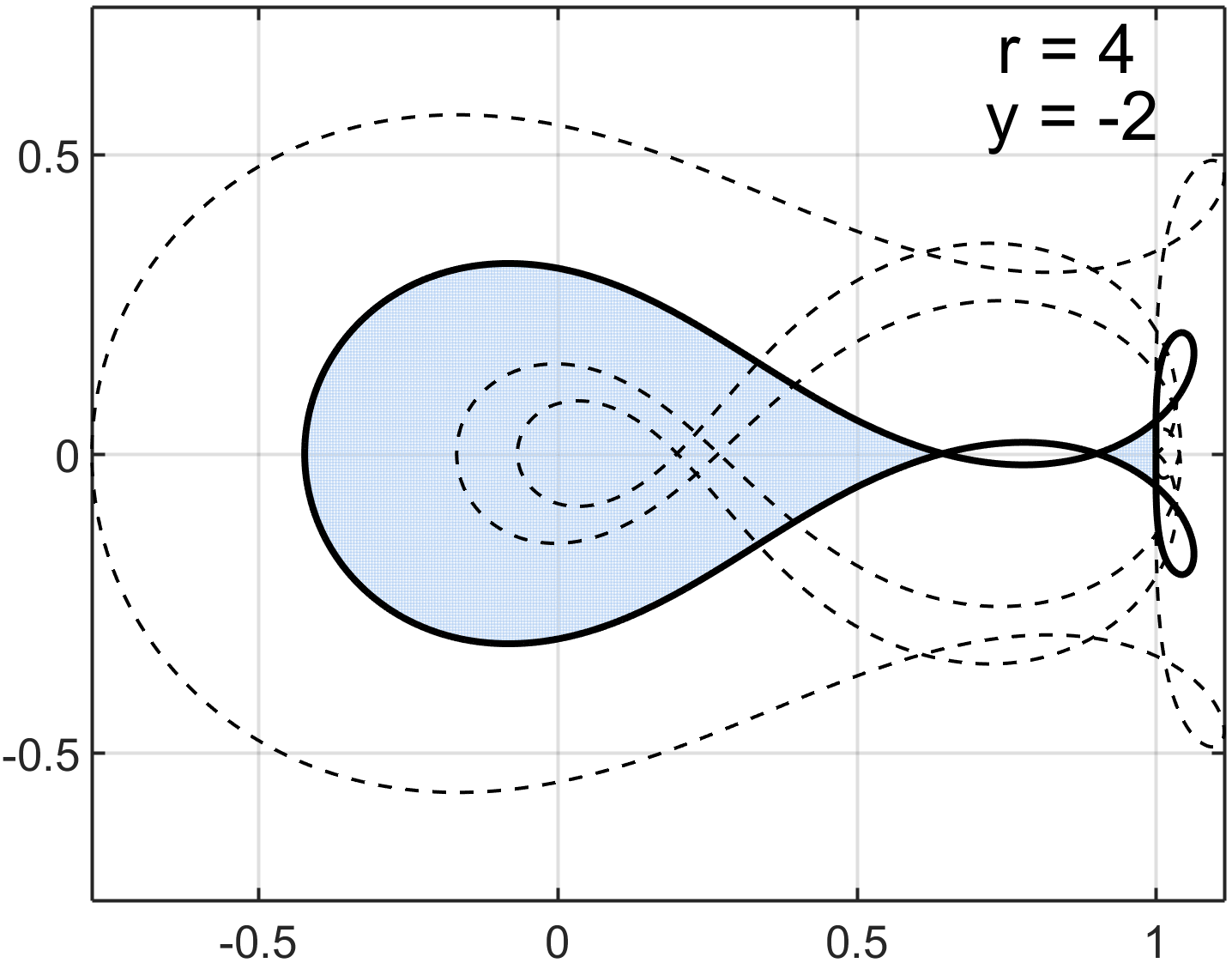} \hfill
\includegraphics[width = 0.32\textwidth]{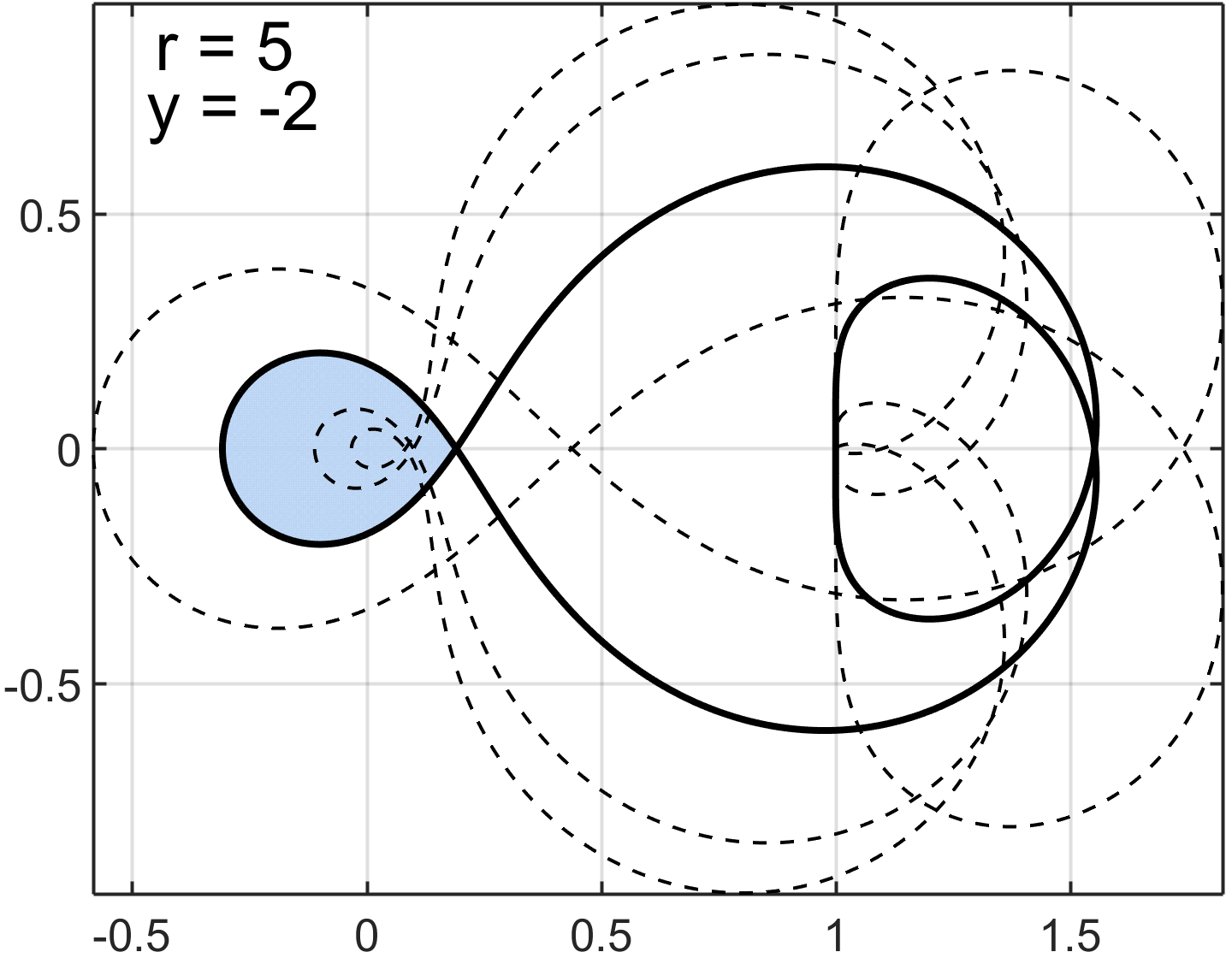}

\vspace*{1em}
\includegraphics[width = 0.32\textwidth]{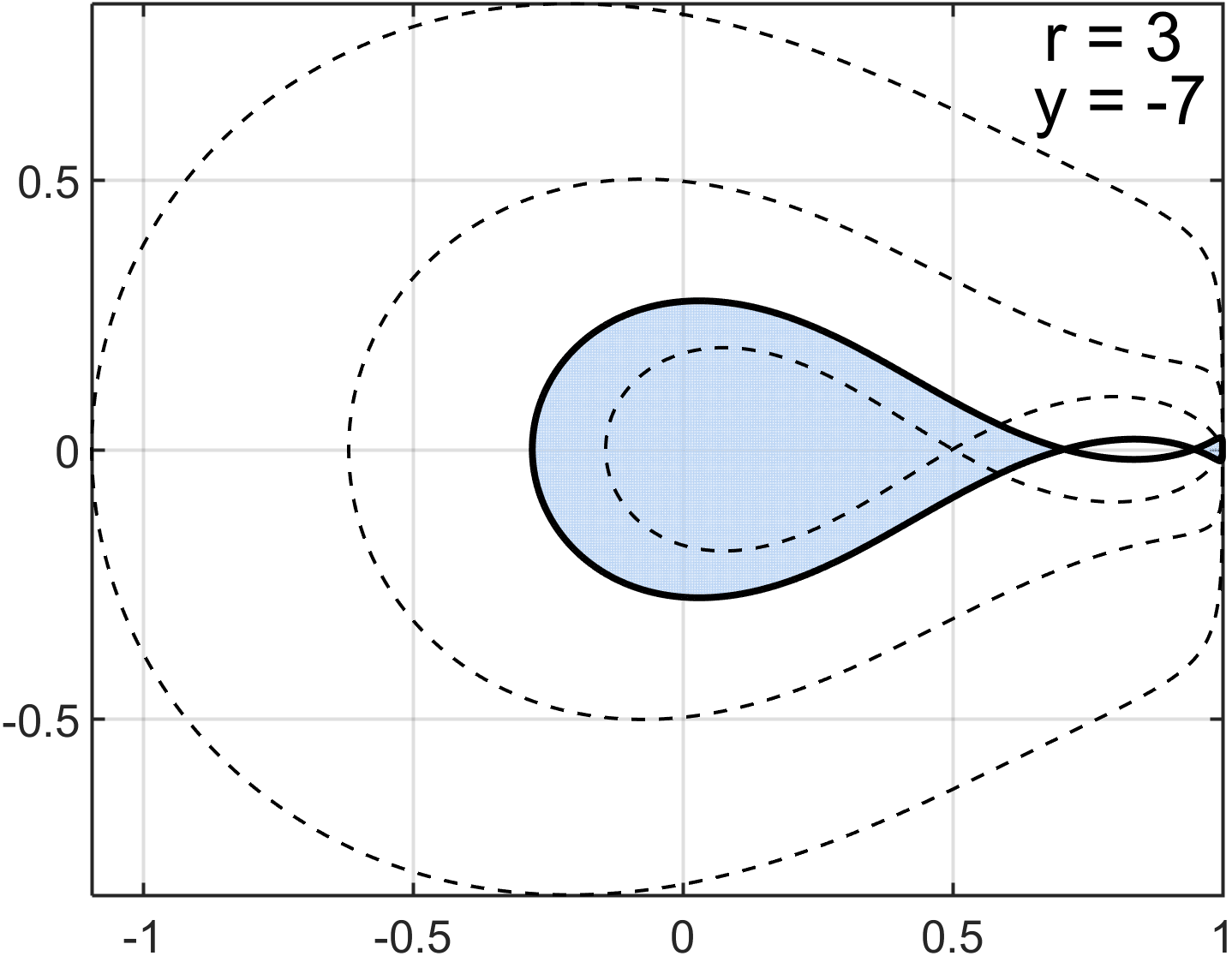} \hfill
\includegraphics[width = 0.32\textwidth]{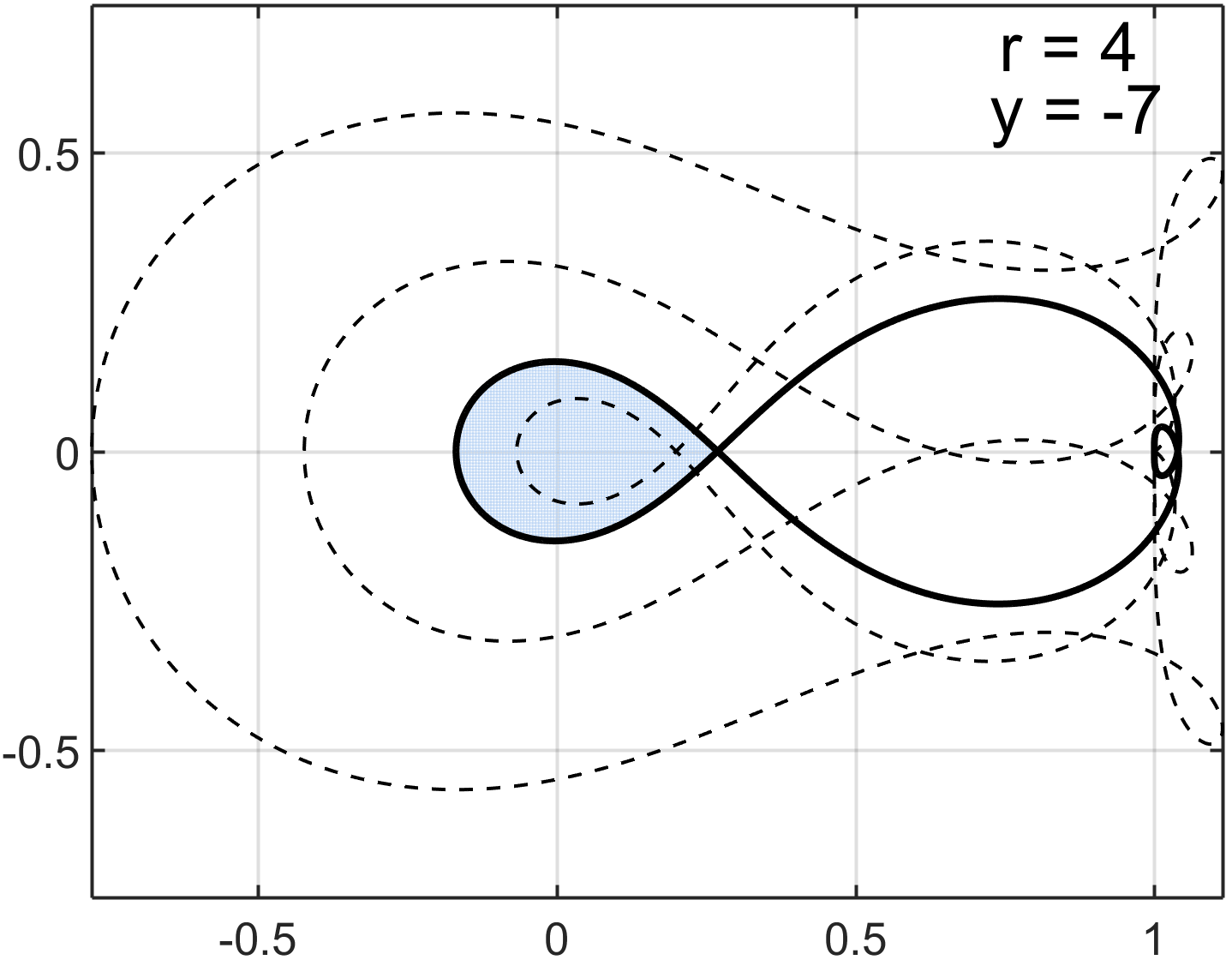} \hfill
\includegraphics[width = 0.32\textwidth]{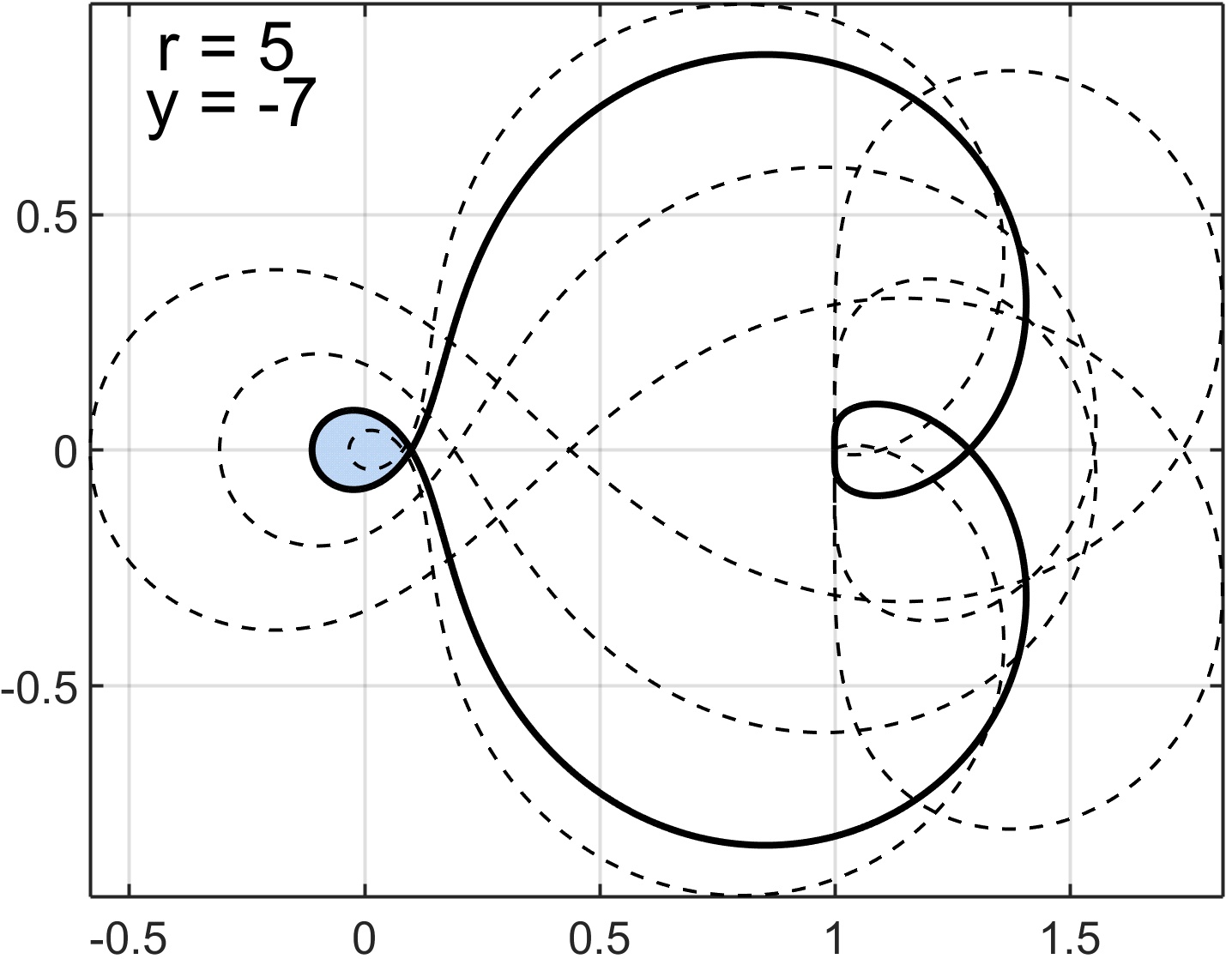}

\vspace*{1em}
\includegraphics[width = 0.32\textwidth]{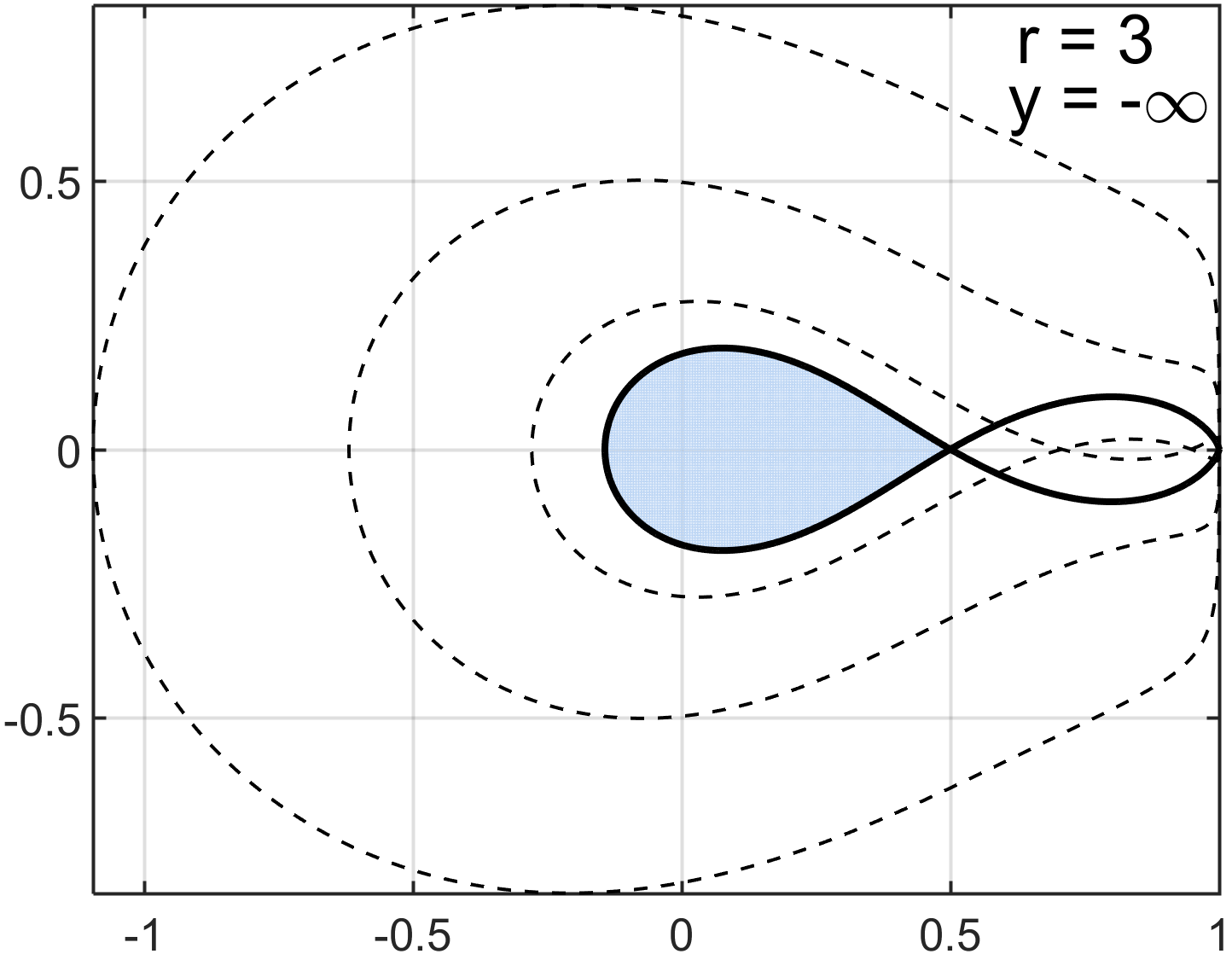} \hfill
\includegraphics[width = 0.32\textwidth]{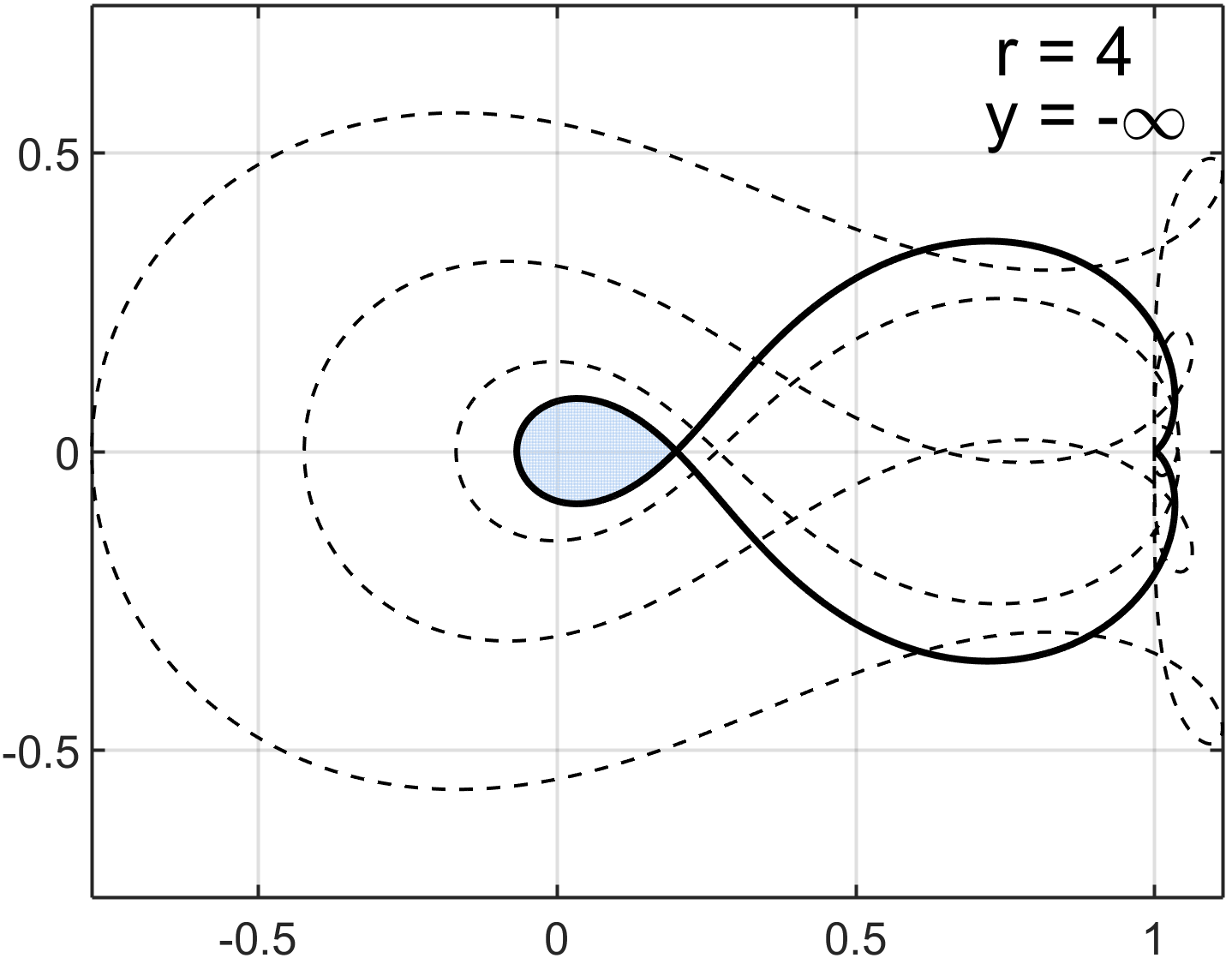} \hfill
\includegraphics[width = 0.32\textwidth]{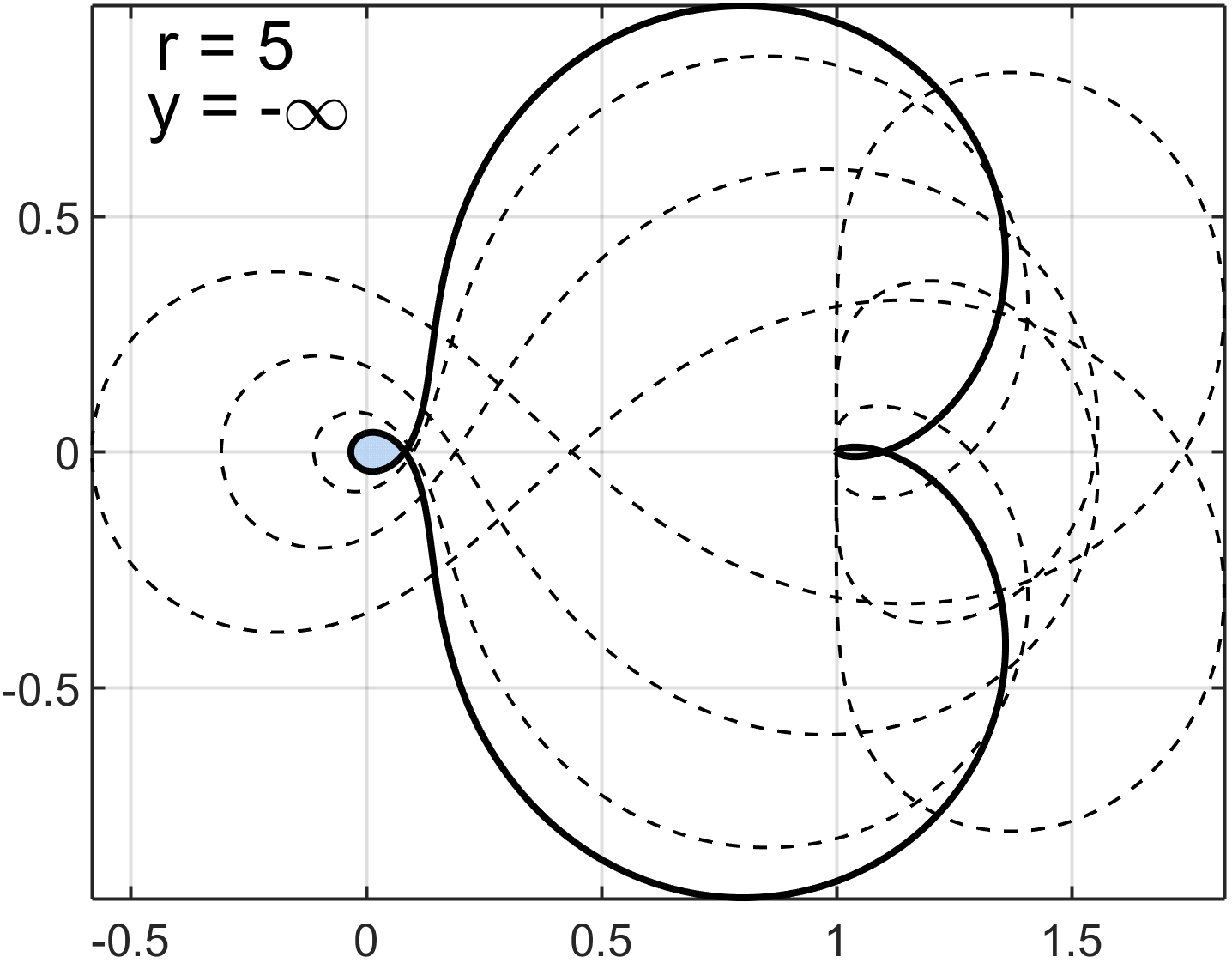}
\caption{Visualization of Proposition~\ref{NestedSets}: 
	$\mathcal{D}_{-\infty}$ is contained in $\mathcal{D}_y$ for all $y < 0$.
   Plot of the boundary locus $\vGamma_y$ (black curves) and the stability
   regions $\mathcal{D}_{y}$ (shaded regions) for: $y = -1, -2, -7, -\infty$
   (top to bottom), orders $r = 3, 4, 5$ (left to right), and fixed
   parameter value $\param = 1$. In each plot, the dashed lines
   $\vGamma_{-1}, \vGamma_{-2}, \vGamma_{-7}, \vGamma_{-\infty}$ are shown
   for reference. Note that the inclusion
   $\mathcal{D}_{-\infty} \subseteq \mathcal{D}_y$ is valid for all
   $y \in \negNum$.
} \vspace*{-1.25em}
\label{Nested_stability_regions}
\end{figure}
%

	With the exact boundary locus description in 
   Theorem~\ref{CharacterizationUStab}, and the subsequent result that
   $\mathcal{D} = \mathcal{D}_{-\infty}$, one may provide an asymptotic
   description of $\mathcal{D}$ in the limit $\param \ll 1$.
%
\begin{remark}\label{AsymptoticD} (Asymptotic $\mathcal{D}$) 
 Define the circle $C$ as
 \begin{align*}
  C &= \Big\{ z \in \mathbb{C} : \Big|z + \frac{1}{r\param} -  
       \frac{r+1}{2r}\Big| \leq \frac{1}{r \param} \Big\}.	
 \end{align*}
 Taking the asymptotic limit $\param \ll 1$ and values of
	$|z - 1| \gg \param$ in formula (\ref{ExactBoundary}) for 
	$\partial \mathcal{D}$ (which correspond to points in $\mathcal{D}$ 
	away from the right-most values 
	along the real axis), the exact boundary 
	$\partial \mathcal{D}$ approaches the circle $\partial C$:
	$\mathcal{D} \approx C + \mathcal{O}(\param).$ 
	For $r = 1$, the domain $\mathcal{D} = C$ is a 
	circle for all $0 < \param \leq 1$. 
\end{remark}
%
The circle $C$ in Remark~\ref{AsymptoticD} is obtained via an asymptotic 
computation, i.e. $\param \rightarrow 0$, of (\ref{ExactBoundary}).
Specifically, note that the starting value of the locus description for
$\mathcal{D}_{-\infty}$ in Theorem~\ref{CharacterizationUStab} satisfies 
$|z_0 - 1| = \mathcal{O}(\param)$, so that the locus parameter $z$ almost 
traces through an entire circle. Consider points $|z - 1| \gg \param$ 
and expand $c(z)/b(z)$ in a Laurent series in powers of $\param$ 
about $z = 1$:
\begin{align} \nonumber
 \frac{c(z)}{b(z)} &= \frac{(z-1)^r + \param r 
    (z-1)^{r-1} + \dotsb}{ \param r (z-1)^{r-1} + 
    \param^2 \frac{r(r-1)}{2}(z-1)^{r-2} + \dotsb} \\ \label{ApproxCircle}
 &= \frac{1}{\param r} 
   \Big( \frac{(z-1) + \param r + \mathcal{O}(\param^2)}{1 + 
   \param \frac{(r-1)}{2}(z-1)^{-1} + \mathcal{O}(\param^2)} 
   \Big) = \frac{1}{r \param}(z - 1) + \frac{r+1}{2r} + \mathcal{O}(\delta).
\end{align}
For values $|z| = 1$, equation (\ref{ApproxCircle}) describes the boundary 
of the circle $C$ defined in Remark~\ref{AsymptoticD} with radius 
$\frac{1}{r\param}$ and center $\frac{r+1}{2r} - \frac{1}{r\param}$. Hence
$\partial \mathcal{D} \approx \frac{1}{r \param}(z - 1) + \frac{r+1}{2r} 
   + \mathcal{O}(\delta)$, for $|z - 1| \gg \mathcal{O}(\param)$.  
Figure~\ref{NewImExStabilityRegions} shows the regions $\mathcal{D}$ for
different parameter values $\param$ and orders $2 \leq r \leq 5$. In
particular, the figure illustrates how the regions $\mathcal{D}$ grow
larger with decreasing $\param$ values, and also approach the asymptotic
circle $C$.

Having precise estimates for the geometric properties of $\mathcal{D}$, 
such as the formulas for $m_r$, $m_l$ and $C$, is very useful for the 
design of unconditionally stable schemes.  Specifically the design
of an unconditionally stable scheme require a simultaneous 
choice of matrix splitting $(\mat{A},\; \mat{B})$, and time stepping
coefficients $(a_j, b_j, c_j)$. If one knows, either through
direct numerical computation or analytic estimates, $W_p$ for a matrix
splitting $(\mat{A},\; \mat{B})$, then the estimates for $m_r, m_l$ and
$C$ can be used to choose a $\param$ value large enough to guarantee that
$W_p \subseteq \mathcal{D}$.  Such a choice of $\param$ will then provide
the suitable time stepping coefficients that guarantee unconditional stability.
We highlight such an approach in several numerical examples in
\Srm\ref{Sec_Examples}, as well as in greater detail in a companion 
paper on the practical aspects of unconditional stability for multistep 
\imex schemes.

\begin{SCfigure}
\begin{tabular}{l l}
\includegraphics[width = 0.31\textwidth]{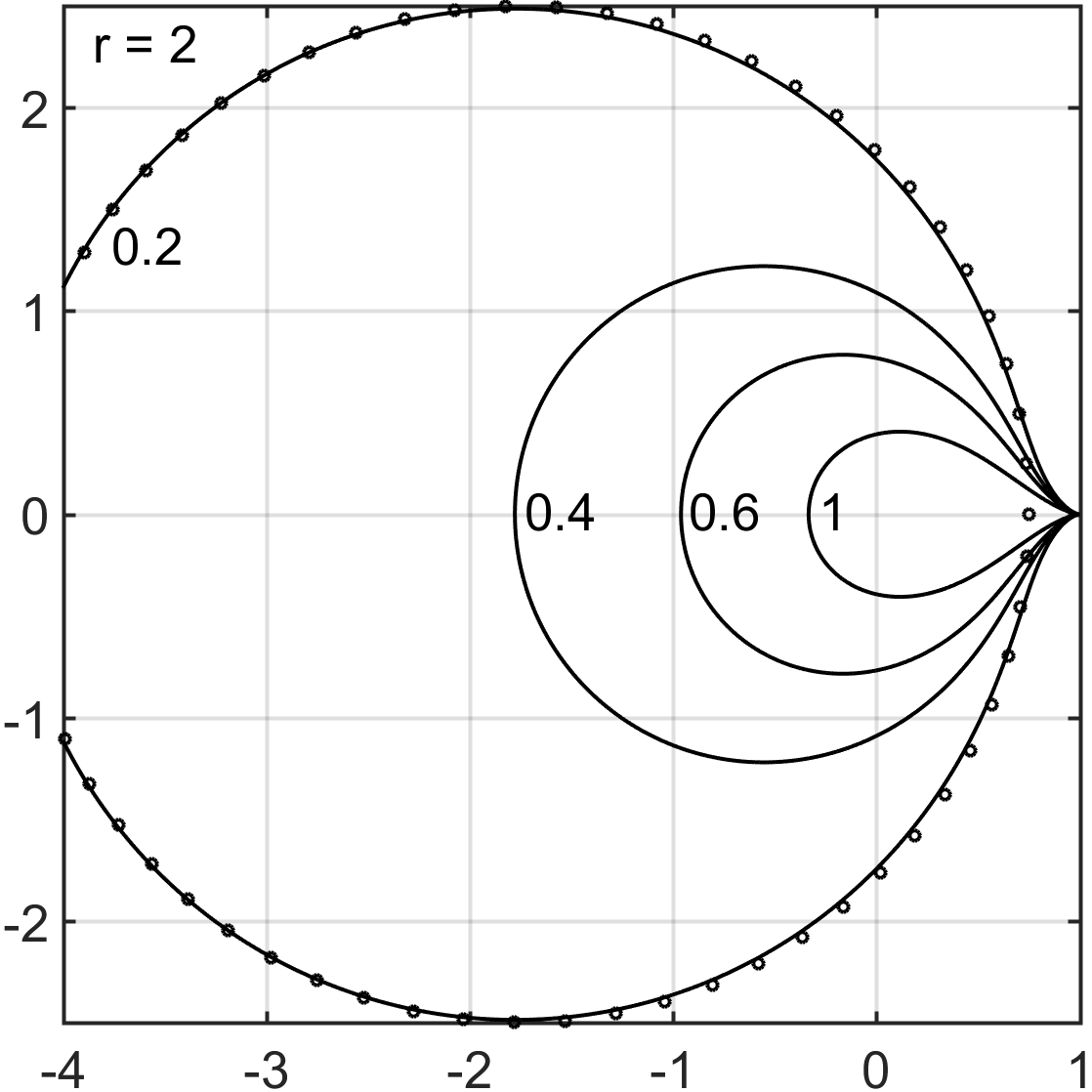}  & 
\includegraphics[width = 0.31\textwidth]{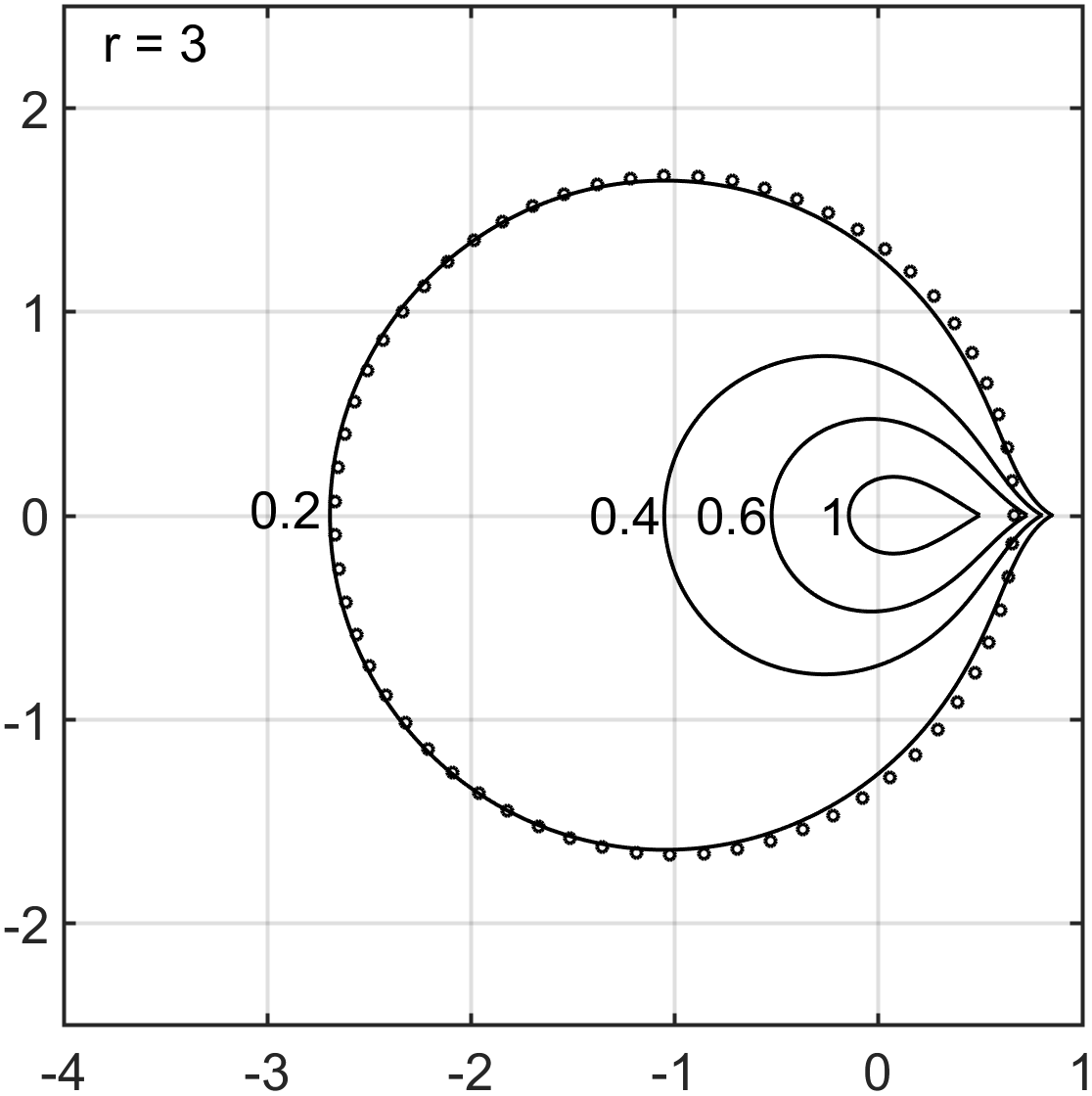}  \\
   \includegraphics[width = 0.31\textwidth]{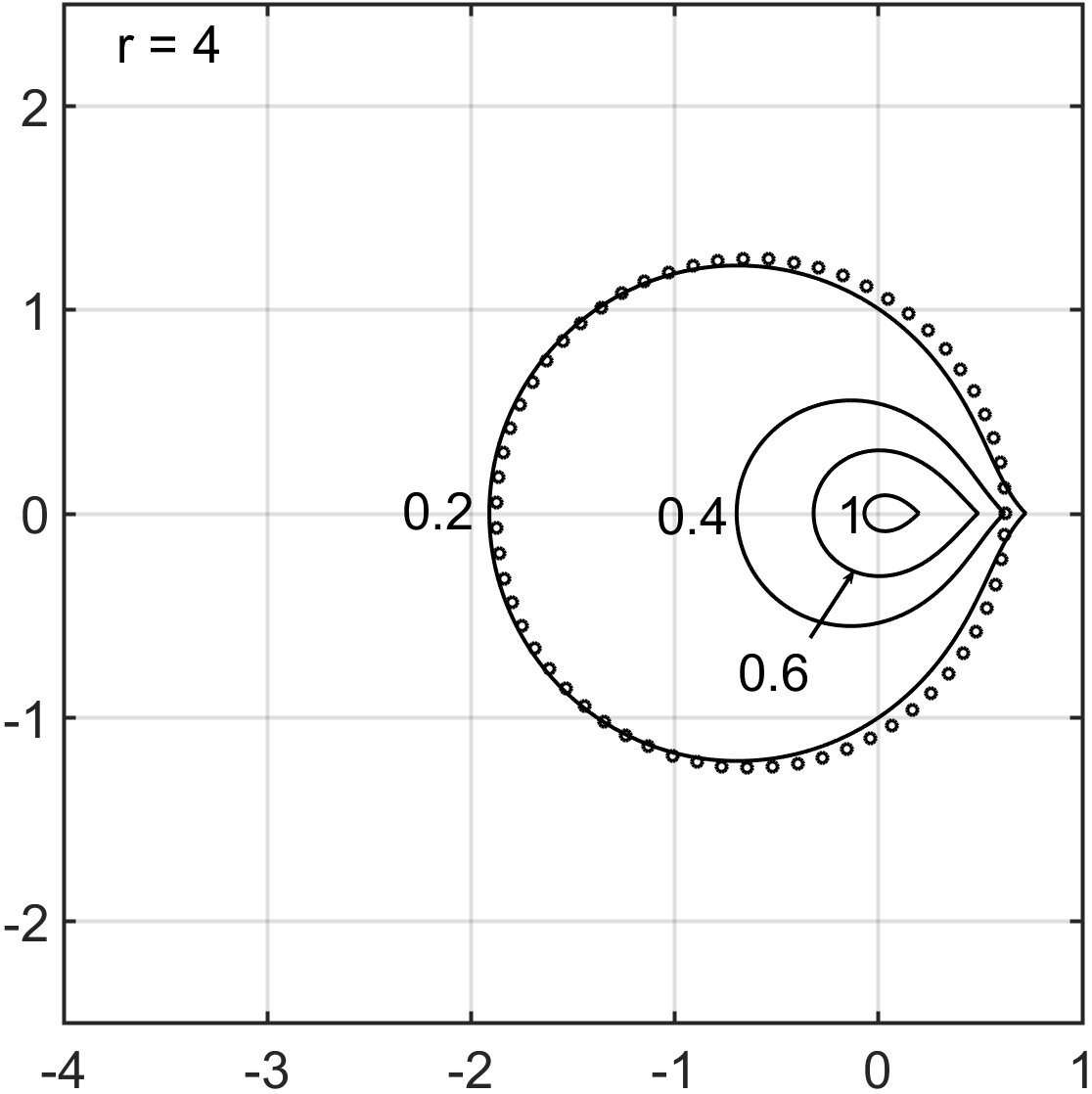} &
   \includegraphics[width = 0.31\textwidth]{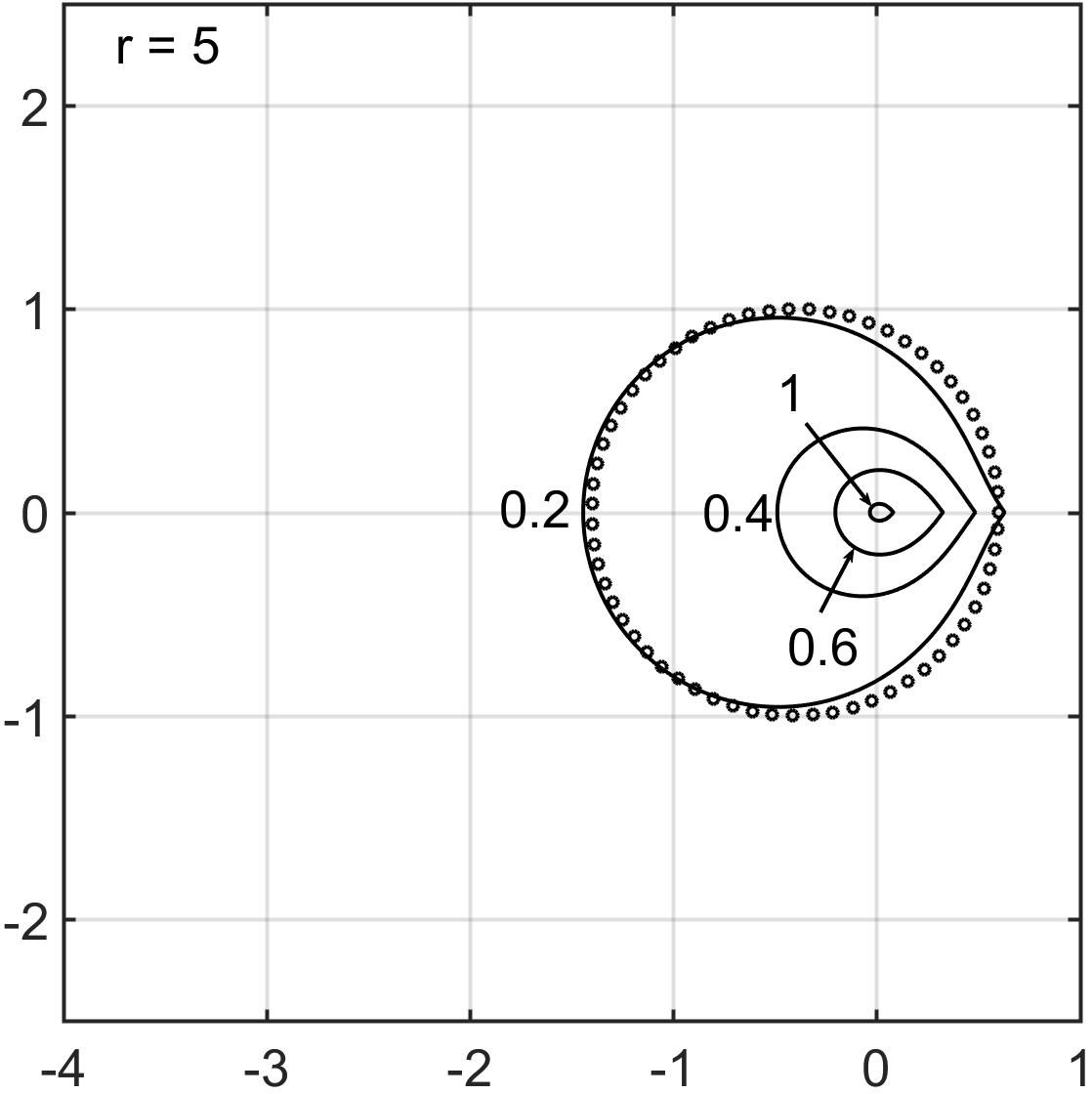}
\end{tabular}\vspace*{-0.75em}
\caption{Region of unconditional stability $\mathcal{D}$ for orders
 $r = 2, 3, 4, 5$.  In each sub-figure, the boundary $\partial\mathcal{D}$
 (solid line) is shown for parameter values $\param = 0.2, 0.4, 0.6, 1$
 ($\param = 1$ corresponds to SBDF).
 \\
 For small $\param \ll 1$, the stability region becomes arbitrarily large.
 With the exception of points near the positive real axis, it approaches
 the asymptotic circle $C$ defined in Theorem~\ref{CharacterizationUStab}. 
 The dots ($\circ$) show $C$ for $\param = 0.2$.}
\label{NewImExStabilityRegions}
\end{SCfigure}
%
\subsection{Necessary conditions for unconditional stability}
\label{SubSection_NecessaryConditions}
The sufficient conditions for unconditional stability 
$W_p \subseteq \mathcal{D}$, are not sharp, and we supplement 
them with additional necessary conditions. Let
\[ \sigma( (-\mat A)^{-1}\mat B) = \{ \mu \in \mathbb{C} : 
 \mu (-\mat{A}) \vec{u} = \mat{B} \vec{u}, 
 \vec{u} \neq \vec{0}\} \]
be the generalized eigenvalues of $(-\mat{A}), \mat{B}$. 
%
\begin{proposition}\label{NecCondition}(Necessary condition for unconditional
 stability)
	Given a set of \imex time stepping coefficients
    (Definition~\ref{NewImExCoeff}), and the corresponding stability diagram
    $\mathcal{D}$, then a necessary condition for unconditional stability of
    the scheme in (\ref{fulltimestepping}), is that the eigenvalues satisfy
    $\sigma( (-\mat A)^{-1}\mat B) \subseteq \mathcal{D} \cup \vGamma_{-\infty}$.
\end{proposition}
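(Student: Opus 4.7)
The plan is to prove the contrapositive. Assume some $\mu_0\in\sigma((-\mat A)^{-1}\mat B)$ lies outside $\mathcal{D}\cup\vGamma_{-\infty}$; I will exhibit, for all sufficiently large $\delt$, a root $|\zeta|>1$ of the characteristic equation $\det\mat T(\zeta)=0$, producing an unbounded normal mode of (\ref{fulltimestepping}) and contradicting unconditional stability.

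First I would translate the hypothesis into a statement about polynomial roots. By Proposition~\ref{NestedSets} we have $\mathcal{D}=\mathcal{D}_{-\infty}$, and $\vGamma_{-\infty}$ (defined in \eqref{Gamma_y}) contains the topological boundary $\partial\mathcal{D}_{-\infty}$; thus $\mathcal{D}\cup\vGamma_{-\infty}\supseteq\overline{\mathcal{D}_{-\infty}}$. Hence $\mu_0$ sits in the open exterior of $\overline{\mathcal{D}_{-\infty}}$, and by the definition of $\mathcal{D}_{-\infty}$ together with continuity of polynomial roots in $\mu$, the polynomial $c(z)-\mu_0\, b(z)$ must have at least one root $\zeta_\ast$ with the strict inequality $|\zeta_\ast|>1$.

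Next I would reformulate the scheme's singularity condition in a form amenable to a large-$\delt$ asymptotic analysis. By Proposition~\ref{Stability2} it suffices to exhibit a root $|\zeta|>1$ of $\det\mat T(\zeta)=0$. Since $\mat A$ is invertible, this is equivalent to the vanishing of
\[
p(z;\tau):=\det\bigl(c(z)\mat I-b(z)(-\mat A)^{-1}\mat B+\tau\,a(z)\mat A^{-1}\bigr),\qquad \tau=1/\delt.
\]
At $\tau=0$, diagonalizing $(-\mat A)^{-1}\mat B$ gives $p(z;0)=\prod_{i=1}^{N}\bigl(c(z)-\mu_i\, b(z)\bigr)$, where $\{\mu_i\}_{i=1}^N=\sigma((-\mat A)^{-1}\mat B)$ counted with multiplicity. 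Since $\mu_0$ is one of the $\mu_i$, $\zeta_\ast$ is a root of $p(\,\cdot\,;0)$.

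Finally I would invoke continuous dependence of polynomial roots. The leading $z^{Nr}$ coefficient of $p(z;\tau)$ equals $\det(\mat I+\tau a_r\mat A^{-1})$, where $a_r$ is the leading coefficient of $a(z)$; this is $1$ at $\tau=0$ and depends polynomially on $\tau$, so the degree of $p(\,\cdot\,;\tau)$ remains $Nr$ for $\tau$ in a neighborhood of the origin. Normalizing $p(\,\cdot\,;\tau)$ to monic form, its coefficients depend continuously on $\tau$, so there exists a root $\zeta(\tau)\to\zeta_\ast$ as $\tau\to 0^+$; the strict inequality $|\zeta_\ast|>1$ is therefore preserved for all $\tau$ small enough, i.e., for all $\delt$ large enough, completing the proof. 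The main technical point is precisely this root-continuity step, together with the verification that the degree of $p(\,\cdot\,;\tau)$ is stable at $\tau=0$ so that no root escapes to infinity in the limit; the explicit form of the leading coefficient $\det(\mat I+\tau a_r\mat A^{-1})$ handles this cleanly.
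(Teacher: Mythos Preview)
Your proof is correct and follows essentially the same strategy as the paper's: pass to the contrapositive, observe that an eigenvalue $\mu_0$ outside $\mathcal{D}\cup\vGamma_{-\infty}$ forces $c(z)-\mu_0\,b(z)$ to have a root strictly outside the unit disk, and then perturb from $\delt=\infty$ to large finite $\delt$. Two minor remarks: your phrase ``diagonalizing $(-\mat A)^{-1}\mat B$'' is stronger than needed (and not assumed) --- the product formula $p(z;0)=\prod_i\bigl(c(z)-\mu_i\,b(z)\bigr)$ follows from triangularization regardless; and your opening step via $\overline{\mathcal{D}_{-\infty}}$ is a slight detour, since $\mu_0\notin\mathcal{D}_{-\infty}$ and $\mu_0\notin\vGamma_{-\infty}$ directly give a root with $|\zeta_\ast|>1$ (no root can sit on $|z|=1$ because $c$ has no unit-modulus roots, so $b$ cannot vanish there simultaneously). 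Your treatment of the perturbation step --- checking that the leading coefficient $\det(\mat I+\tau a_r\mat A^{-1})$ is nonzero near $\tau=0$ so the degree is stable and no root escapes to infinity --- is more explicit than the paper's, which simply asserts the persistence of the unstable eigenvalue.
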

%
\begin{proof}
 The idea behind the necessary condition is that in the limit of
 large time steps $\delt \rightarrow \infty$, the nonlinear eigenvalue 
 problem (\ref{eqn_NLeigenvalue0}) governing stability, can be solved
 using the eigenvectors of the matrix $(-\mat A)^{-1}\mat B$. 
 As a result, a necessary condition for unconditional stability may 
 be placed on the eigenvalue spectrum
 $\mu \in \sigma((-\mat{A})^{-1}\mat{B})$.

 We first prove a slightly stronger statement. Let 
 \[
  \mathcal{A} := \{ \mu \in \mathbb{C} : (\ref{Simplified_Polynomial}) 
  \textrm{ has a solution } |z| > 1\}.
 \]
 Then $\sigma((-\mat{A})^{-1}\mat{B}) \subseteq \mathcal{A}^c$ is a necessary 
 condition for unconditionally stability.  This is because, in the limit 
 $\delt \rightarrow \infty$, the nonlinear eigenvalue problem
 (\ref{eqn_NLeigenvalue0}) becomes
 \begin{align}\label{ApproxEV}
  \mat{T}(z) \vec{u} = -c(z) \mat{A} \vec{u} - b(z) \mat{B} \vec{u} = 0.
 \end{align}
 Hence, an eigenvector $\vec{u}_{\mu}$ to $(-\mat A)^{-1}\mat B$ 
 with eigenvalue $\mu \in \sigma( (-\mat A)^{-1}\mat B)$ becomes 
 an eigenvector of (\ref{ApproxEV})
 \begin{align}\label{ApproxEVb}
  \mat{T}(z) \vec{u}_{\mu} = -(c(z) - \mu b(z) ) \mat{A} \vec{u}_{\mu} = 0.  
 \end{align}
 Thus the eigenvalues $z$ satisfy (\ref{Simplified_Polynomial}), 
 since $\mat{A}\vec{u}_{\mu} \neq 0$ because $\mat{A}$ is invertible.  
 If $\mu$ is also in $\mathcal{A}$, then at least one solution to 
 (\ref{ApproxEVb}) satisfies $|z| > 1$.  Finally, we note
 that any nonlinear eigenvalue $|z| > 1$, arising in
 the limit $\delt \rightarrow \infty$, will yield a slightly
 perturbed eigenvalue when $0 < \delt^{-1} \ll 1$.
 Thus, for any $\delt$ sufficiently large (but finite) an
 unstable eigenvalue satisfying $|z| > 1\/$ will exist.

 Finally we observe that $\mathcal{A}^c \subseteq 
 \mathcal{D} \cup \vGamma_{-\infty}$.
 The reason is that every $\mu \in \mathcal{A}^c$ has one of the following
 properties: 
 (i)  all solutions to (\ref{Simplified_Polynomial}) have 
      $|z| < 1$, implying $\mu \in \mathcal{D}$, or
 (ii) at least one solution to (\ref{Simplified_Polynomial}) has $|z| = 1$ 
      (with all the others $|z| < 1$), implying $\mu \in \vGamma_{-\infty}$.
\end{proof}
%
\begin{remark}\label{Nottight}
 Numerical experiments (such as the diagrams in
 Figure~\ref{Nested_stability_regions})	suggest that the set 
	$\mathcal{D} \cup \vGamma_{-\infty}$ in Proposition~\ref{NecCondition} can 
	be further reduced to include only the portion of $\vGamma_{-\infty}$ 
	that is the boundary $\partial \mathcal{D}$ and the single 
	point $\{1\}$. 	
\end{remark}
%
\begin{remark}\label{Limiting_D_Behavior}
 In the limit $\param \rightarrow 0$, $\mathcal{D}$ approaches the circle
 $C$ which encompasses an entire complex half-plane:
 \begin{align}
	\Big\{ \mu \in \mathbb{C} :  \mathrm{Re}(\mu) 
	< \frac{r+1}{2r}, 1 \leq r \leq 5 \Big\} 
	\subseteq \lim_{\param \rightarrow 0}\mathcal{D}.
 \end{align}
 The limiting $\mathcal{D}$ also contains the real half-line
 $(-\infty, (1 + \cos^r(\pi/r))^{-1})$, for $2 \leq r \leq 5$.
\end{remark}
%
\subsection{Numerical error dependence on $\param$ for the new \imex coefficients}\label{Sec_Error}
Up to now, the results appear to indicate that one should choose
$\param \ll 1$ (extremely small) to yield a large unconditional stability
region. In this section we describe why this is not a good strategy.  In
particular, we investigate the dependence of the 
\emph{global truncation error} (GTE) on $\param$ for the 
new \imex coefficients.  We do so by 
running numerical tests, and computing the \emph{error constants} 
which characterize the leading order asymptotic GTE behavior in $\delt$.

The GTE at time $t_n = n\delt$, is defined by 
$\| \vec{u}_n - \vec{u}^*(n\delt)\|_{\ell^\infty}$  and depends on $\mat{L}$, 
the time stepping coefficients, and the forcing $\vec{f}(t)$. Here
$\vec{u}^*(t)$ is the exact ODE solution to (\ref{Eqn_ODE}) at time $t$.
Formally, the new \imex schemes given in Definition~\ref{NewImExCoeff}
achieve $r$-th order 
accuracy, so that the $\textrm{GTE} = \mathcal{O}(\delt^{r})$.  The leading
order constant in the GTE depends on $\mat{A}, \mat{B}, \vec{f}$ and the 
time stepping coefficients (for error constants in an LMM see equation
(2.3), p.~373, in \cite{HairerNorsettWanner1987}).
In \imex schemes one may examine two 
separate error constants, an implicit $C_{I,r}$ (resp.\ explicit $C_{E,r}$) 
constant characterizing the error of a purely implicit (resp.\ explicit)
scheme where $\mat{B} = 0$ (resp.\ $\mat{A} = 0$):
\begin{align}\label{ErrConstants}
	C_{I,r} := \frac{R_{I,r}}{c(1)} = \param^{-r} R_{I,r}, \quad \quad
	C_{E,r} := \frac{R_{E,r}}{b(1)} = \param^{-r} R_{E,r}. 
\end{align}
Here we have used the fact that $c(1) = b(1) = \param^r$ for the new 
\imex schemes, while the constants $R_{I,r}, R_{E,r}$ quantify how 
much the $r$-th order coefficients (when $r = s$) fail to satisfy 
the $(r+1)th$ order conditions (\ref{OrderConditions})
\[
  R_{I, r} = \frac{1}{(r+1)!}
  \sum_{j = 0}^r\Big( a_j j^{r+1} - (r+1)c_j j^r\Big), \hspace{2mm}
  R_{E,r} = \frac{1}{(r+1)!} 
  \sum_{j = 0}^r\Big( a_j j^{r+1} - (r+1)b_j j^r\Big). 
\]
Even though $R_{I,r}, R_{E,r}$ depend on $\param$, both constants satisfy
$R_{I,r} = \mathcal{O}(1), R_{E,r} = \mathcal{O}(1)$, for all values of 
$0 < \param \leq 1$.  As a result, the asymptotic $\param \ll 1$ behavior 
on the GTE for the new \imex coefficients is 
$\textrm{GTE} = \mathcal{O}( \param^{-r} \delt^r)$.
	The numerical tests in \Srm\ref{Sec_Examples}, as well as those in
  \Srm\ref{Supp_gte_test}, confirm the estimate
  GTE $\sim \param^{-r} \delt^r$.
As a result of this scaling, we adopt the general philosophy: given a
splitting $(\mat{A}\/,\,\mat{B})$, choose $\param$ as large as possible
while maintaining unconditional stability.
%

\section{Two illustrative examples}\label{Sec_Examples}
In this section we highlight the potential of the new \imex coefficients to 
obtain unconditionally stable schemes. The first 
example (\Srm\ref{Subsec_A_smaller_B}) illustrates that a small implicit 
term can stabilize a larger explicit term. The second example 
(\Srm\ref{Subsec_var_coeff}) represents the numerical discretization of a 
variable coefficient diffusion equation. For this stiff problem, unconditional 
stability for orders $r > 2$ is beyond the capabilities of classical SBDF 
schemes; however, the new coefficients achieve the goal.

\subsection{A single variable ODE}\label{Subsec_A_smaller_B}
Consider the ODE
\begin{align}\label{Ex1_simpleODE}
	u_t = -10u = -u - 9u,
\end{align}
with splitting $\mat{A}u := -u$ and $\mat{B}u := -9u$.  For this simple
case, $(\mat{A},\mat{B})$ are numbers, or $1\times 1$ matrices.
An important observation is that $|\mat{A}| = 1$, while $|\mat{B}| = 9$,
i.e., the implicit term is 9 times smaller than the explicit term.

The set $W_1 = \{ -9 \}$ consists of one element, and it is also equal to the
generalized eigenvalue $\sigma( (-\mat{A})^{-1} \mat{B}) = \{-9\}$.
Therefore unconditional stability requires that $\{-9\} \subseteq \mathcal{D}$.
Using the fact that the left-most endpoint of $\mathcal{D}$ is given by
$m_l$ in formula in Remark~\ref{CharacterizationUStab}, one obtains
unconditional stability for an $r$-th order scheme, provided that
\[
	\frac{-(2-\param)^r}{2^r - (2-\param)^r} < -9, \quad \Longleftrightarrow \quad
	\param < 2\Big[ 1 - \big(\frac{9}{10} \big)^{1/r} \Big].
\]
Note that for a fixed $\param$ value, the unconditional stability regions
$\mathcal{D}$ become smaller with increasing $r$.  Setting $r = 5$ inside 
the inequality yields $\param < 0.0417$.  Therefore, a choice of the parameter
value $\param = 0.04$ inside the new \imex coefficients
guarantees that $W_1 \subseteq \mathcal{D}$ for $r = 5$, and hence
subsequently for all $1 \leq r \leq 5$.
Hence, the smaller implicit term stabilizes the instabilities generated by the explicit 
term, thus achieving unconditional stability.

\subsection{A PDE example: variable coefficient diffusion}\label{Subsec_var_coeff}

\begin{figure}[htb!]
\centering
  \begin{minipage}[b]{0.48\textwidth}
\includegraphics[width = 0.65\textwidth]{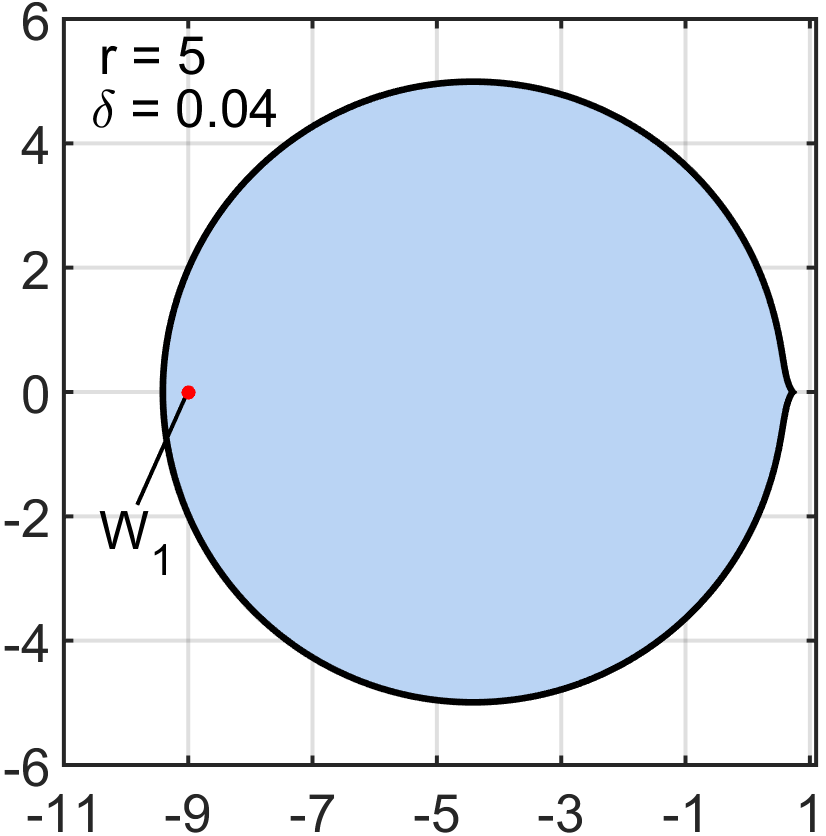}
\caption{ODE example (\ref{Ex1_simpleODE}). Note that $W_1$ (red $\circ$) is
contained in $\mathcal{D}$ (shaded region) for the parameter value
$\param = 0.04$ and order $r = 5$.  Using the new \imex 
coefficients with $\param = 0.04$ yields an unconditionally stable scheme.\\}
\label{Example_single_var_ODE}
  \end{minipage}
  \hfill
\begin{minipage}[b]{0.48\textwidth}   
		\includegraphics[width = 0.65\textwidth]{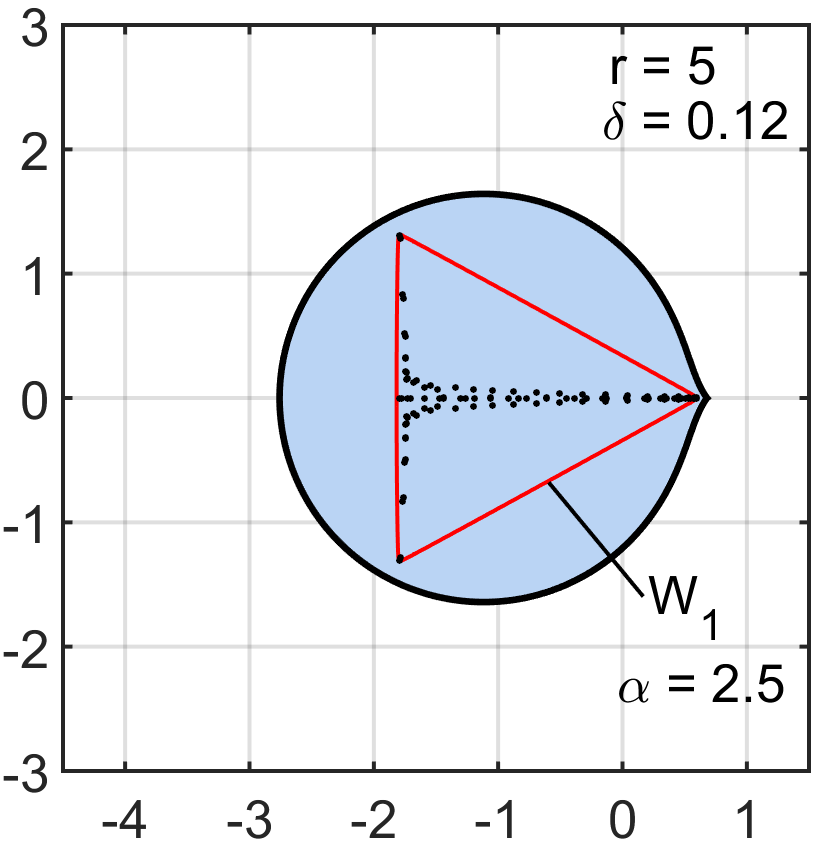}
\caption{PDE example for variable diffusion coefficient problem.
The stability diagram $\mathcal{D}$ (shaded region) is
for the parameter $\param = 0.12$ and order $r = 5$, and contains $W_1$ 
	(red curve shows the boundary). The black dots show 
	the generalized eigenvalues $\sigma( (-\mat{A})^{-1}\mat{B} )$.}
		\label{Example_var_coeff}
  \end{minipage}
\end{figure}

This example demonstrates how one might use the new \imex
coefficients, in conjunction with the sufficient conditions for unconditional
stability, to avoid a stiff time step restriction in
the spatial discretization of a PDE.   Specifically,
we numerically solve the variable coefficient diffusion equation
on the domain $\vOmega = (-1, 1)$:
\[
	u_t = \big( d(x) u_x \big)_x + f(x, t),
	\quad \quad \textrm{on } \vOmega \times (0, T],
\]
with Dirichlet boundary conditions,
$u = 0$, on $x \in \{-1, 1\}$.  Here $d(x) > 0$ is a spatially
dependent diffusion coefficient.

For the spatial discretization, we adopt a Chebyshev spectral method
(chapter 5--7, \cite{Trefethen2000}) using the $\sizeN + 2$
Chebyshev collocation points\footnote{Note that here the points 
$1 = x_0 > x_1 > \dotsb > x_{\sizeN + 1} = -1$ are in \emph{reverse} 
order, following the usage in \cite{Trefethen2000}.}
\[ x_j = \cos\Big( \frac{j \pi}{\sizeN + 1} \Big), \quad 0 \leq j \leq \sizeN + 1,
	\quad \textrm{with } \quad
	\vec u = \begin{pmatrix}
		u(x_1), \ldots, u(x_{\sizeN})
	\end{pmatrix}^T \in \mathbb{R}^{\sizeN}.\]
We also use the boundary conditions to set $u(x_0) = u(x_{\sizeN + 1}) = 0$ so
that there are only $\sizeN$ independent variables.  Let $\mat{D}_{\sizeN}$ 
be the spectral differentiation matrix, so that
$\mat{D}_{\sizeN} \vec{u} \approx u_x(x)$.
The matrix $\mat{L}$ is then built using the Dirichlet boundary
conditions by constructing
\[
	\mat{L} = \mat{D}_{\sizeN} \;
	\textrm{diag}\big( d(x_j) \big) \; \mat{D}_{\sizeN}.
\]
Here $\mat{L}$ acts on $\vec{u}$ at the $\sizeN$ grid
points $x_1, x_2, \ldots, x_{\sizeN}$ (see page 62, chapter 7 in 
\cite{Trefethen2000} for details).  
Note that due to collocation of the boundary conditions,
the matrix $\mat{L}$, as well as the Laplacian $(\mat{D}_{\sizeN})^2$,
are not symmetric.  However, the spectrum of $\mat{L}$ and $\mat{D}_{\sizeN}^2$
are still purely real, in contrast with the situation in truly asymmetric 
problems, such as advection-diffusion.

In practice, the semi-implicit time stepping of (\ref{fulltimestepping}),
using the schemes defined by Definition~\ref{NewImExCoeff},
requires both a choice of splitting $(\mat{A}, \mat{B})$ and a set of new
\imex coefficients fixed by a choice of $\param$.  For this example we 
consider a splitting where $\mat{A}$ is a scalar multiple of the symmetrized 
part of the discrete, spectral Laplacian:
\[
	\mat A = \frac{ \alpha }{2} \Big( (\mat{D}_{\sizeN})^2 + (\mat{D}_{\sizeN}^2)^T\Big),
	\quad
	\mat B = \mat{L} - \mat{A}
\]
with an $\alpha > 0$. Here the choice of $\mat{A}$
is negative definite and symmetric.

It is worth noting that in general,
$\mat{A}$ and $\mat{B}$ do not commute, therefore motivating the use of the 
new unconditional stability criteria. 
For this class of splittings, we focus on using the generalized numerical
range $W_1$.  The reason is that the size and shape of $W_1$ depends only
very weakly on $\sizeN$ for large $\sizeN$.  

There are now two free variables to choose: (i) $\alpha$, which fixes the relative
splitting of the (symmetric) implicit Laplacian to the explicit variable diffusion,
and (ii) $\param$, which fixes the \imex coefficients.  Ideally, one would like to
simultaneously choose $\alpha$ and $\param$ to obtain unconditional stability and
also minimize the overall error in the scheme.  We defer a detailed discussion
on how one may minimize the error for a companion paper on practical aspects
of unconditional stability. Here we state briefly how one may first choose
$\alpha$, followed by $\param$ to obtain unconditional stability.

Decreasing $\alpha$ moves the set
$W_1$ left in the complex plane --- into a region that may be stabilized by the
new \imex coefficients.  Specifically, we choose $\alpha$ small enough so that
the right-most point of $W_1$ is pushed to the left of the right-most
point of the limiting set $\mathcal{D}$ (see Remark~\ref{Limiting_D_Behavior} 
for the right-most point of $\mathcal{D}$).
Once $W_1$ is sufficiently far left, we choose a sufficiently small $\param$
value to ensure that $W_1 \subseteq \mathcal{D}$.
To compute $W_1$, we first build the matrix
$\mat X = (-\mat A)^{-\frac{1}{2} } \mat \;
		\mat B \; (-\mat A)^{-\frac{1}{2}}$,
followed by using the MATLAB \emph{Chebfun} routine	
\cite{DriscollHaleTrefethen2014} to	compute $W_1 = W(\mat X)$, based on a 
classical algorithm due to Johnson \cite{Johnson1978}.

Finally, we perform a convergence test using the variable diffusion coefficient
\begin{align*}
	d(x) &= 4 + 3 \cos(2\pi x).
\end{align*}

Figure~\ref{Example_var_coeff} shows the set $W_1$ for a variable
coefficient $d(x)$ and a value of $\alpha = 2.5$.  In addition, the
figure also shows a plot of the enclosing stability region
$\mathcal{D}$ for order $r = 5$ and the parameter value $\param = 0.12$.
Note that the unconditional stability region $\mathcal{D}$
becomes smaller as the order $r$ increases, so that $\param = 0.12$
 automatically guarantees unconditional stability for all orders
$1 \leq r \leq 5$.
For a convergence test, we use a manufactured solution approach and prescribe
a forcing function $f(x,t)$ to yield an exact solution:
\[	u^*(x, t) = \sin(20 t) \sin(2\pi x) e^{\sin(2\pi x)}.\]
The numerical test case is also chosen to satisfy the \emph{exact} initial data:
$\vec{u}_j = \vec{u}^*(x, j \delt)$ evaluated at the grid points, for 
$j = 0, -1, \ldots, -r+1$.  Table~\ref{VariableDiff_CvgTest3} shows the absolute $L^{\infty}(\vOmega)$ errors
for an integration time $t_f = 1$ and grid $\sizeN = 100$.  Convergence rates
for $1 \leq r \leq 5$ are observed as expected. Computations are done using
MATLAB with double precision floating point
arithmetic. Errors are limited to $10^{-9}$ for $r = 5$ due to machine
precision and round off errors.

\begin{remark}
	An important observation is that  
	the set $W_1$ remains bounded as $\sizeN \rightarrow \infty$. 
	This result is of great practical relevance: one 
	fixed value of $\param$ can yield a stability region that contains 
	$W_1$ for arbitrary $\sizeN$.  For instance, the 
	convergence results in Table~\ref{VariableDiff_CvgTest3} are all 
	computed using the same value of $\param$. 
	Therefore, the new time stepping schemes can be advantageous in PDE
	applications where the parameter $\param$ can be chosen for a particular
	splitting of the differential operators; and hold uniformly for
	any level of discretization of those operators (i.e., for a whole
	family of matrix splittings).	
\end{remark}
This example can be seen as a blueprint for many practical applications: the 
implicit part is simple and efficient to solve for (symmetric, constant 
coefficient), and the new \imex coefficients enable one to obtain a numerical 
approximation that is unconditionally stable, thus avoiding diffusive-type 
time step restriction associated with explicit methods.

\begin{table}[htb!]
\centering
\begin{small}
\begin{tabular}{ |@{~} c@{~} ||@{~} c@{~} |@{~} c@{~} ||@{~} c@{~} | @{~}c @{~}||@{~} c@{~} |@{~} c@{~} ||@{~} c@{~} |@{~} c@{~} ||@{~} c@{~} |@{~} c@{~} |}
\hline
 $\delt$ & Error & Rate & Error & Rate & Error & Rate & Error & Rate & Error & Rate  \\ \hline
         & $r = 1$ &  & $r = 2$ &  & $r = 3$ &  & $r = 4$ &  & $r = 5$ &  \\ \hline
$2^{-6}$ 	 & 	  2.1e+00 	 & 	 0.4 &	  1.4e+00 	 & 	 0.5 &	  1.0e+00 	 & 	 1.8 &	  1.9e+00 	 & 	 4.4 &	  4.0e+00 	 & 	 5.9 \\
$2^{-7}$ 	 & 	  1.3e+00 	 & 	 0.7 &	  7.6e-01 	 & 	 0.9 &	  4.4e-01 	 & 	 1.2 &	  4.2e-01 	 & 	 2.2 &	  6.8e-01 	 & 	 2.6 \\
$2^{-8}$ 	 & 	  7.0e-01 	 & 	 0.9 &	  1.8e-01 	 & 	 2.1 &	  2.4e-01 	 & 	 0.9 &	  1.5e-01 	 & 	 1.5 &	  1.9e-02 	 & 	 5.2 \\
$2^{-9}$ 	 & 	  3.6e-01 	 & 	 1.0 &	  7.3e-02 	 & 	 1.3 &	  5.1e-02 	 & 	 2.2 &	  3.8e-03 	 & 	 5.3 &	  4.8e-03 	 & 	 2.0 \\
$2^{-10}$ 	 & 	  1.8e-01 	 & 	 1.0 &	  3.0e-02 	 & 	 1.3 &	  5.8e-03 	 & 	 3.1 &	  5.5e-04 	 & 	 2.8 &	  1.8e-04 	 & 	 4.7 \\
$2^{-11}$ 	 & 	  8.2e-02 	 & 	 1.1 &	  8.8e-03 	 & 	 1.8 &	  6.0e-04 	 & 	 3.3 &	  5.4e-05 	 & 	 3.4 &	  4.7e-06 	 & 	 5.3 \\
$2^{-12}$ 	 & 	  3.9e-02 	 & 	 1.1 &	  2.3e-03 	 & 	 1.9 &	  6.7e-05 	 & 	 3.2 &	  3.9e-06 	 & 	 3.8 &	  1.2e-07 	 & 	 5.3 \\
$2^{-13}$ 	 & 	  1.9e-02 	 & 	 1.0 &	  6.0e-04 	 & 	 2.0 &	  7.9e-06 	 & 	 3.1 &	  2.6e-07 	 & 	 3.9 &	  3.7e-09 	 & 	 5.0 \\
 \hline
\end{tabular}
\end{small}
	\caption{Errors for variable coefficient diffusion test case
	$\alpha = 2.5$, $\param = 0.12$,
	$t_f = 1$, $\sizeN = 100$. Exact solution
	$u^* = \sin(20 t) \sin(2\pi x) e^{\sin(2\pi x) } $. Note that an explicit
	scheme, such as explicit Euler, would require a time step restriction 
	$\mathcal{O}(\sizeN^{-2}) \sim 10^{-4} \sim 2^{-13}$.  Here unconditional
	stability allows one to choose a time step based solely on accuracy 
	considerations.}
	\label{VariableDiff_CvgTest3}
\end{table}


\section{Discussion and conclusions}
We have introduced a stability region $\mathcal{D}$, along with
a generalized numerical range, as a way to guarantee unconditional stability
for \imex LMMs with a negative definite implicit term. It should be
stressed that this type of study of unconditional stability is, structurally,
not limited to \imex LMMs and can also be examined in the context of
any other time stepping scheme, such as RK methods, exponential integrators,
deferred correction, or Richardson extrapolation. Moreover, unconditional
stability (and further generalizations of $\mathcal{D}$) can in principle be
examined also when the implicit term is not symmetric negative
definite, such as for stiff wave problems.

In addition to sufficient criteria for unconditional stability we have also
introduced a family of \imex LMM coefficients, parameterized
by $0 < \param \leq 1$ (which reduce to classical SBDF when $\param = 1$).
This parameter $\param$ incurs crucial implications for stability, 
and the examples
in \Srm\ref{Sec_Examples} highlight how the new \imex coefficients can yield
highly efficient time-stepping schemes.


In light of these substantial advantages, three points of caution have to be stressed:
\begin{enumerate}[(a)]
	\item The error constant for an $r$-th order method scales as
	$\param^{-r}$.
	\item Computations with $\param \ll 1$ may substantially
    amplify round-off errors.
	\item L-stability, or small growth factors, are desirable properties
    for stiff equations, and lost for $\param<1$.
	If one uses the new \imex coefficients as a fully implicit
	scheme (i.e., choosing $\mat{A} := \mat{L}$, $\mat{B} = 0$), then
	stability of the test equation $u_t = \lambda u$ is characterized by
	roots of the polynomial $a(z) - \delt \lambda c(z) = 0$.
	In the limit $\delt \rightarrow \infty$, the roots approach
	$\zeta := 1-\param$ (repeated $r$ times).  L-stability is only attained
	when the roots $\zeta$ have $\param = 1$, corresponding to SBDF.
	Moreover, if $\param \ll 1$, then the growth factor $1-\param$ is close to $1$,
    implying that stiff modes may require many time steps to decay.
\end{enumerate}
To conclude, major drawbacks of the new \imex
schemes are incurred only if $\param \ll 1$. In practice, a moderate $\param$
value (for instance
$\param \sim 0.1$) is frequently sufficient to stabilize a matrix
splitting. In such a case the debilitating drawbacks of the new coefficients
pale in comparison to the alternative of having to use a stiff time
step restriction.


\section{Tables of new \imex coefficients}\label{Supp_coeff_tables}

This section presents the new \imex coefficients $(a_j, b_j, c_j)$ 
for $0 \leq j \leq r$, as a function of $0 < \param \leq 1$.  
To use the coefficients 
in practice, first (i) choose a small enough value of $\param$ 
that guarantees unconditional stability, (ii) substitute the chosen
value of $\param$ into the tables in this section to obtain the 
time stepping coefficients at the required order.  

\setlength{\mylength}{0.36\textwidth + \tabcolsep + \arrayrulewidth}%

\medskip
\noindent
\begin{small}
\begin{tabular}{ |@{~}>{\centering\arraybackslash}p{0.08\textwidth} @{~} |@{~}>{\centering\arraybackslash} p{0.03\textwidth} @{~} || @{~}>{\centering\arraybackslash}p{0.18\textwidth} @{~}| @{~}>{\centering\arraybackslash}p{0.18\textwidth} @{~}|@{~} >{\centering\arraybackslash}p{0.18\textwidth} @{~} |@{~} >{\centering\arraybackslash}p{0.18\textwidth} @{~} |}
\hline
	Order &  & $j = 3$ & $j = 2$ & $j = 1$ & $j = 0$  \\ \hline
	1 & $a_j$  & . &  . & $\param$ & $-\param$ \\ 
	  & $c_j$  & . &  . & 1 & ($\param$-1) \\ 
	  & $b_j$  & . &  . & 0 & $\param$ \\ \hline \hline
    2 & $a_j$  & . & $2\param - \frac{1}{2}\param^2$ & $-4\param + 2\param^2$ & $2\param- \frac{3}{2}\param^2$ \\ 
      & $c_j$  & . & 1 & $2(\param - 1)$ & $(\param - 1)^2$ \\ 
      & $b_j$  & . & 0 & $2\param$ & $(\param - 1)^2 - 1$ \\ \hline \hline
    3 & $a_j$  & $3\param - \frac{3}{2}\param^2 + \frac{1}{3}\param^3$ & $-9\param + \frac{15}{2}\param^2 -\frac{3}{2}\param^3$ & $9\param - \frac{21}{2}\param^2 + 3\param^3$ & $-3\param + \frac{9}{2}\param^2 - \frac{11}{6}\param^3$ \\ 
      & $c_j$  & 1 & $3(\param-1)$ & $3(\param - 1)^2$ & $(\param - 1)^3$ \\ 
      & $b_j$  & 0 & $3\param$ & $-6\param + 3\param^2$ & $(\param - 1)^3 + 1$ \\ \hline 
\end{tabular}
\end{small}

\medskip
\medskip

\noindent
\begin{small}
\begin{tabular}{ |@{~}>{\centering\arraybackslash}p{0.08\textwidth} @{~} |@{~} >{\centering\arraybackslash}p{0.03\textwidth} @{~} || @{~}>{\centering\arraybackslash} p{\mylength} @{~}| @{~}>{\centering\arraybackslash}p{\mylength} @{~}|}
\hline
	Order &        &  		 & $j = 4$   \\ \hline
	4     & $a_j$  &  	.	 & $4\param - 3\param^2 + \frac{4}{3}\param^3 - \frac{1}{4}\param^4$ \\
	      & $c_j$  &  	.	 &  1 		 \\
	      & $b_j$  &  	.    &  0 		 \\ \hline \hline
	      &        & $j = 3$ & $j = 2$   \\ \hline 
	      & $a_j$  & $-16\param + 18\param^2 - \frac{22}{3}\param^3 + \frac{4}{3}\param^4$ & $24\param - 36\param^2 + 18\param^3 -3\param^4$  \\ 
	      & $c_j$  & $4(\param-1)$ 	& $6(\param - 1)^2$ 		\\ 
      	  & $b_j$  & $4\param$ 		& $-12\param + 6\param^2$  	\\ \hline \hline
	      &        & $j = 1$ & $j = 0$   \\ \hline 
	      & $a_j$  & $-16\param + 30\param^2 - \frac{58}{3}\param^3 + 4\param^4$ & $4\param - 9\param^2 + \frac{22}{3}\param^3 - \frac{25}{12}\param^4$ \\     
      	  & $c_j$  & $4(\param - 1)^3$ & $(\param - 1)^4$ \\ 
      	  & $b_j$  & $12\param - 12\param^2 + 4\param^3$  & $(\param - 1)^4  - 1$ \\ \hline	      
\end{tabular}
\end{small}

\medskip
\medskip
\noindent
\begin{small}
\begin{tabular}{ |@{~}>{\centering\arraybackslash} p{0.08\textwidth} @{~} |@{~} >{\centering\arraybackslash}p{0.03\textwidth} @{~} || @{~} >{\centering\arraybackslash}p{\mylength} @{~}| @{~}>{\centering\arraybackslash}p{\mylength} @{~}|}
\hline
	Order &        & $j = 5$ & $j = 4$   \\ \hline
    5     & $a_j$  & $5\param - 5\param^2 + \frac{10}{3}\param^3 - \frac{5}{4}\param^4 + \frac{1}{5}\param^5$ & $-25\param + 35\param^2 - \frac{65}{3}\param^3 + \frac{95}{12}\param^4 - \frac{5}{4}\param^5$\\ 
          & $c_j$  & 1 & $5(\param - 1)$\\ 
          & $b_j$  & 0 & $5\param$ \\ \hline \hline          
	 	  &        & $j = 3$ & $j = 2$   \\ \hline
          & $a_j$  &  $50\param - 90\param^2 + \frac{190}{3}\param^3 - \frac{65}{3}\param^4 + \frac{10}{3}\param^5$ & $- 50\param + 110\param^2 - \frac{280}{3}\param^3+ 35\param^4 -5\param^5  $\\ 
          & $c_j$  & $10(\param - 1)^2$ & $10(\param - 1)^3$ \\ 
          & $b_j$  & $-20\param + 10\param^2$  & $30\param + 10\param^3 - 30\param^2$  \\ \hline \hline          
	      &        & $j = 1$ & $j = 0$   \\ \hline
          & $a_j$  & $25\param - 65\param^2 + \frac{200}{3}\param^3 - \frac{365}{12}\param^4 + 5\param^5$ & $- 5\param + 15\param^2 - \frac{55}{3}\param^3 + \frac{125}{12}\param^4 - \frac{137}{60}\param^5 $ \\ 
          & $c_j$  & $5(\param - 1)^4$ & $(\param - 1)^5$ \\ 
          & $b_j$  & $-20\param  + 30\param^2 - 20\param^3+ 5\param^4$ & $(\param - 1)^5 + 1$\\ \hline    
    \end{tabular} 
\end{small}

\bigskip

{\bf Acknowledgments:}
	The authors wish to acknowledge support by the
National Science Foundation through grants DMS--1318709 (Seibold and
Zhou) and DMS--1318942 (Rosales); as well as partial support through grants
DMS--1719637 (Rosales), DMS--1719693 (Shirokoff) and DMS--1719640 (Seibold
and Zhou).  D. Shirokoff was supported by a grant from the Simons
Foundation ($\#359610$).

\appendix

\section{Properties of $W(\mat X)$}\label{Supp_properties_W}
For completeness, we 
list, without proof (see chapter 1, \cite{HornJohnson1991} for a general treatment), 
several well-known properties of the numerical 
range.  Denote the spectrum (set of all eigenvalues) of $\mat{X} \in \mathbb{C}^{\sizeN \times \sizeN}$ as
\begin{align}\label{Spectrum}
	\sigma(\mat{X}) := \{ \lambda \in \mathbb{C} : 
\mat{X}\vec{v} = \lambda \vec{v}, \vec{v} \neq \vec{0}\},
\end{align}
and numerical range as
\begin{align} 
 W(\mat X) := \{ \langle \vec x, \mat X \vec x \rangle :
            \| \vec x \| = 1, \vec x \in \mathbb{C}^{\sizeN}\}\/. 
\end{align}
Then the following hold:
\begin{enumerate}
	\item $W(\mat X) \subset \mathbb{C}$ is a closed and bounded subset of 
	the complex plane.
	\item $\sigma(\mat{X}) \subseteq W(\mat X)$. 
	\item $W(\mat X + \mat Y) \subseteq W(\mat X) + W(\mat Y)$, where 
	$W(\mat X) + W(\mat Y) = \{ x + y : x \in W(\mat X), y \in W(\mat Y)\}$.
	\item $W(\alpha \mat I + \beta \mat X) = \alpha + \beta \; W(\mat X)$, 
	where $\mat I$ is the $\sizeN \times \sizeN$ identity matrix and 
	$\alpha, \beta \in \mathbb{C}$.
	\item If $\mat X$ is normal, then $W(\mat X)$ is the convex hull of the 
	eigenvalues of $\mat X$.
	\item $W( \mat{X} \oplus \mat{Y}) = \textrm{conv} \{W(\mat{X}), W(\mat{Y}) \}$ 
	is the convex hull of $W(\mat{X})$ and $W(\mat{Y})$.
	\item (Hausdorff-Toeplitz theorem) $W(\mat X)$ is convex (even when 
	$\mat X$  is not normal).
	\item $\max |W(\mat X)|$ defines a matrix norm.
  (the numerical radius).
\end{enumerate}
Property (2) implies that for any $p \in \mathbb{R}$, one has 
$\sigma( (-\mat{A})^{-1}\mat{B} )\subseteq W_p$.

\section{Verification of Proposition~\ref{Order_conditions}}\label{Supp_Prop_Order_conditions}

This section discusses the verification of Proposition~\ref{Order_conditions} 
regarding the order conditions and zero-stability
for the new \imex coefficients.  For completeness we include the formulas for the 
order conditions and also the definition of zero-stability. 

 For an $r$-th order method ($r \leq s\/$), the $3s+2$ \imex coefficients cannot
   be independent and must satisfy the order conditions:
\begin{align} \label{OrderConditions} 
 &\sum_{j = 0}^s a_j = 0\/, \hspace{10mm}
  \sum_{j = 0}^s j a_j = \sum_{j = 0}^s c_j = \sum_{j = 0}^s b_j\/, \hspace{10mm}
  \sum_{j = 0}^s \frac{j^2}{2} a_j = \sum_{j = 0}^s j c_j =
         \sum_{j = 0}^s j b_j\/,  \\ \nonumber
 &\ldots \quad \sum_{j = 0}^s \frac{j^r}{r!} a_j =
  \sum_{j = 0}^s \frac{j^{r-1}}{(r-1)!} c_j =
  \sum_{j = 0}^s \frac{j^{r-1}}{(r-1)!} b_j\/.
\end{align}
The formulas (\ref{OrderConditions}) then impose $2r + 1$ linear constraints 
on the coefficients and agree with the ones in 
\cite{AscherRuuthWetton1995}.

For completeness we recall here the
definition for zero stable schemes
\begin{definition}\label{Def_ZeroStability}
 (Zero stability) The scheme (\ref{eqn_NLeigenvalue0}) is zero stable if
 every simple solution to $a(\zeta) = 0\/$ satisfies
    $|\zeta| \leq 1\/$, and every repeated solution satisfies
    $|\zeta| < 1$.
\end{definition}

\begin{proof}
We lack an analytic proof of zero-stability. However,
   in Figure 1 we plot the complex roots $z$ to $a(z)=0$
   for orders $2 \leq r \leq 5$.  The plot shows that $a(z)$ has 
   $r-1$ distinct roots strictly within the unit circle (one root at 
   $z = 1$) for all $0 < \delta \leq 1$, indicating that the schemes are 
   zero-stable.

To show that $(a_j, b_j, c_j)$ satisfy the order conditions, consider first 
fixing a set of coefficients $c_j$ via (\ref{NewImex_c}).  Consistency 
requires that the local truncation error for the \imex scheme after 
one time step $\delt$ be $\mathcal{O}( \delt^{r+1})$. 
In other words  (Theorem 2.4, pg. 370, \cite{HairerNorsettWanner1987})
there is a root $z_{\delt}$ to
\begin{align} \label{Implicit_Root}
	a( z_{ \delt}) = \delt c(z_{ \delt}), \textrm{ satisfying }  
	z_{ \delt} = e^{ \delt}+\mathcal{O}( \delt^{r+1}), \textrm{ when }  
	\delt \rightarrow 0.
\end{align}
Or equivalently, letting $z = e^{ \delt}$:
\[ a(z) = (\ln z) c(z) + \mathcal{O}\big( (\ln z)^{r+1}\big), 
\textrm{ when } z\rightarrow 1. \]
In the above equation, the polynomial $a(z)$ is of degree $r$ and must agree 
with $f(z) = (\ln z) c(z)$ to order $r$ near $z = 1$. Therefore $a(z)$ is 
the $r$-th order Taylor polynomial of $f(z)$ about $z = 1$. 

Regarding $b(z)$, the order conditions inductively imply that $c(1)=  b(1)$, 
$c'(1) = b'(1)$, $\ldots$, $c^{(r)}(1) = b^{(r)}(1)$. Hence, $c(z) - b(z)$ is 
a polynomial with $z = 1$ as an $r$-th repeated root so that 
$c(z) - b(z) \propto (z-1)^r$.  For $b(z)$ to define an explicit scheme, the 
degree $b(z) < $ degree $c(z) = r$. Therefore the proportionality constant 
must be $1$ so that  $b(z) = c(z) - (z-1)^r$.
\end{proof}

\begin{figure}[htb!]
\centering
\includegraphics[width = 0.4\textwidth]{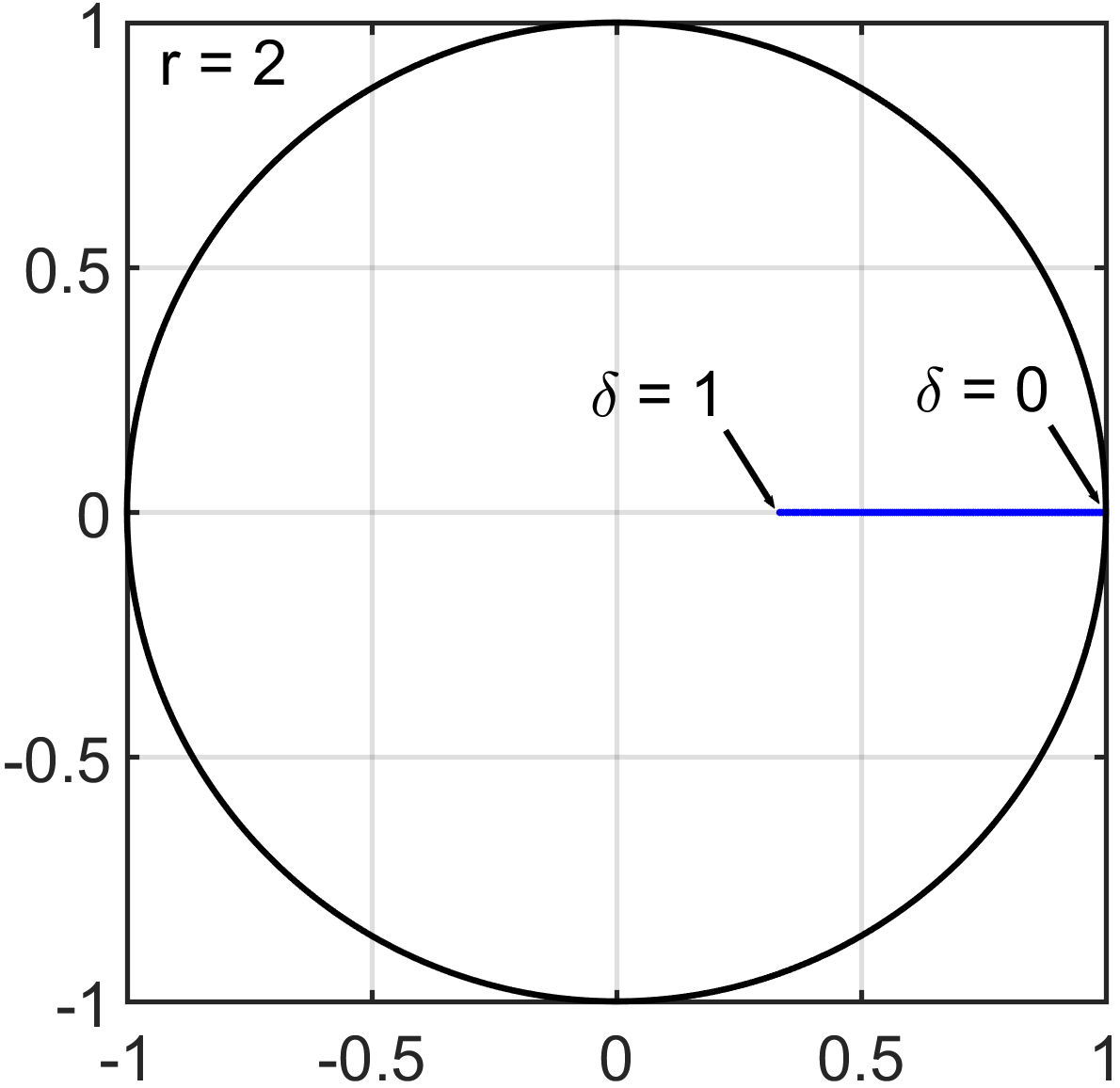} 
\includegraphics[width = 0.4\textwidth]{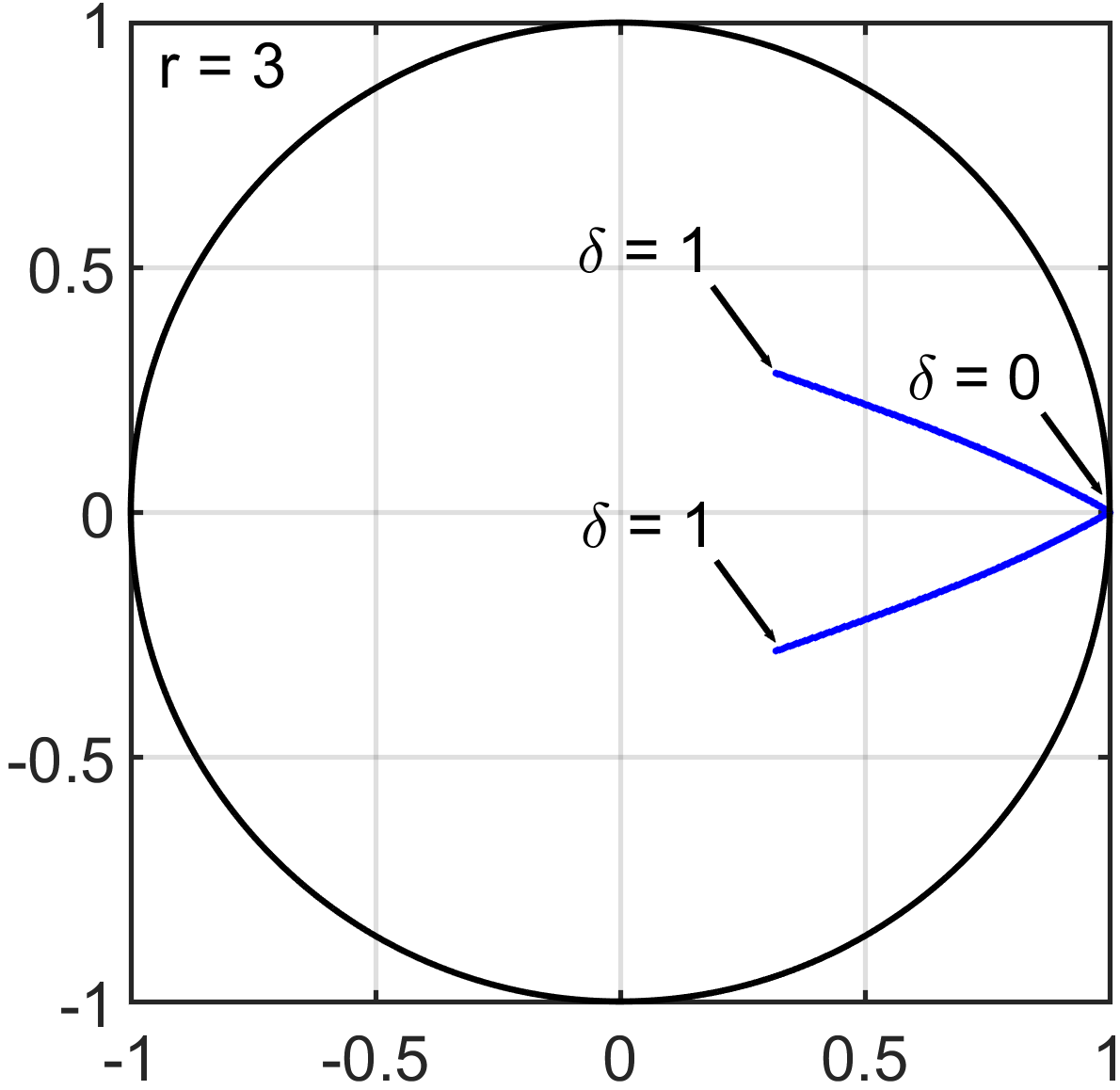} \\
\includegraphics[width = 0.4\textwidth]{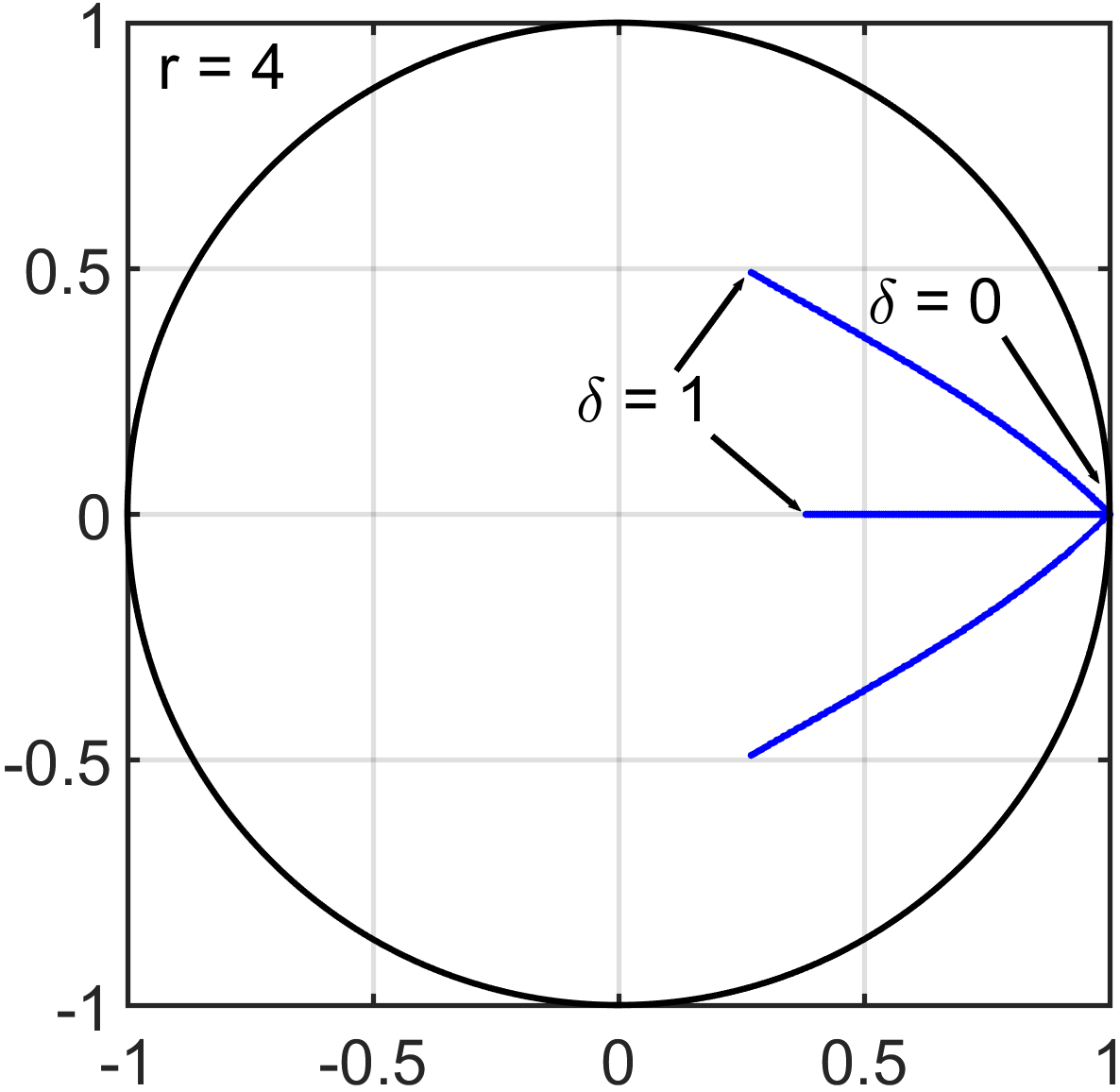} 
\includegraphics[width = 0.4\textwidth]{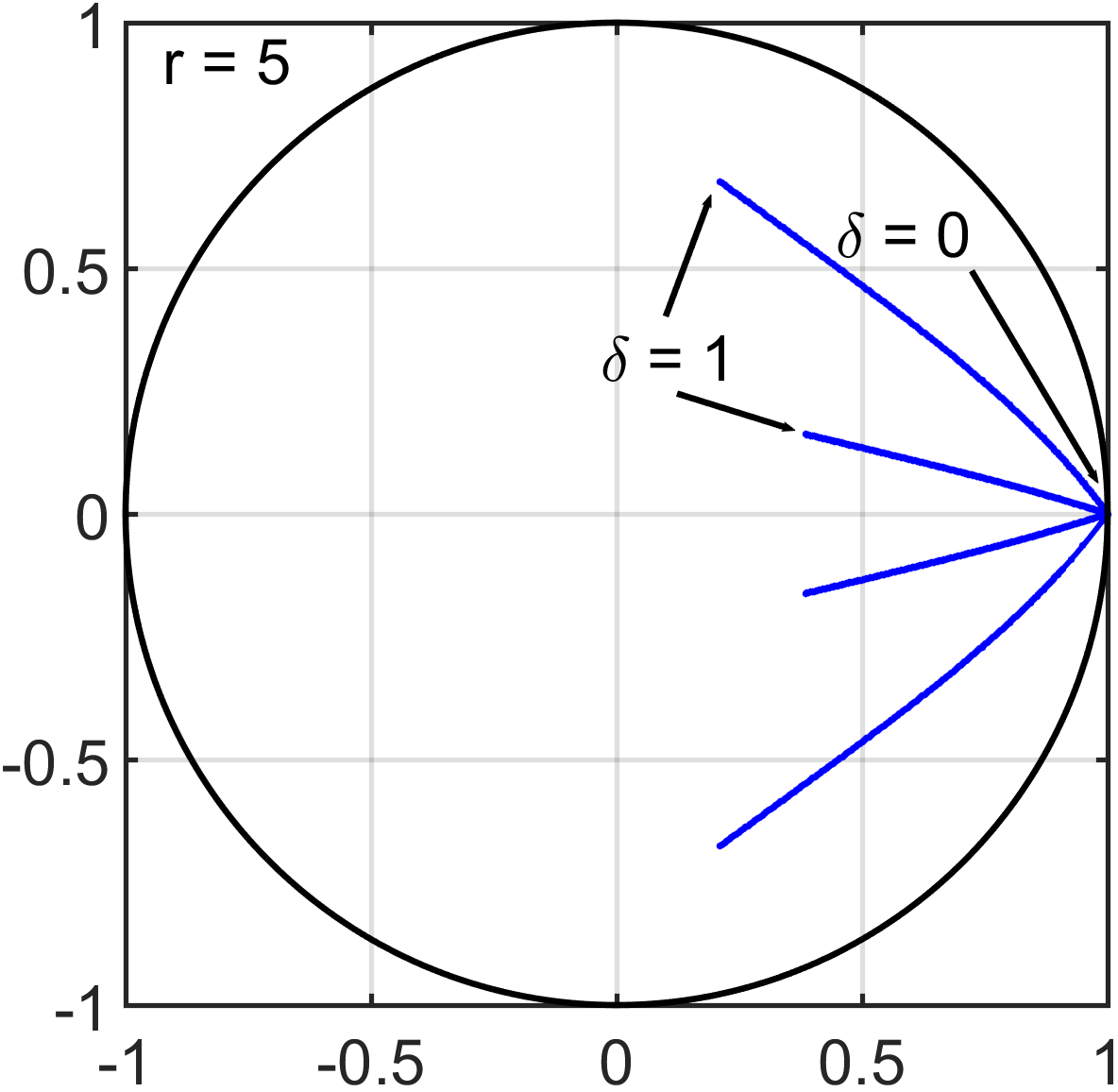} 
\caption{Zero stability for the new \imex coefficients defined by
 (\ref{NewImex_c})--(\ref{NewImex_a}), and orders $r = 2$--$5\/$. 
 The blue curve shows the roots of $a(z)$ in the complex plane for 
 different $\param$.
 Note $z = 1$ is a root for all $\param$ since $a(1) = 0$. The remaining
 $r-1$ roots remain strictly inside the unit circle for all
 $0 < \param \leq 1$, and approach $z = 1$ as $\param \rightarrow 0$.}
 \label{Zero_Stability}
\end{figure}

\section{Numerical test: Global Truncation Error constant}\label{Supp_gte_test}
To numerically examine the 
$\param$-dependence on the error, we compute the GTE of the test ODE:
\begin{align}\label{GTE_Test}
	u_t = -u, 
\end{align}	
where $\mat Au = -u$ (and $\mat Bu = 0$). We take a final integration time 
$t = 1$, $u^*(1) = e^{-1}$.  We perform tests with two different sets of 
time steps $\delt$: 
\begin{itemize}
	\item Table~\ref{GTE_Table1} shows the GTE (scales as $\delta^{-r}$), and 
	error rates for a fixed $\delt = 10^{-3}$ and variable $\param$ values.  
	\item Table~\ref{GTE_Table2} shows the GTE for different $\param$ values, 
	with a time step $\delt = \frac{1}{5} \param$ taken as a fixed
	fraction of $\param$.  The table 
	shows that the GTE is approximately constant over decreasing $\param$,
	and decreases by 
	$\big(\frac{\delt}{\param}\big)^r = \big(\frac{1}{5}\big)^r$ with the order. 
\end{itemize}


\begin{table}[htb!]
\centering
\begin{footnotesize}
\begin{tabular}{ |@{~} c @{~}||@{~} c @{~}|@{~} c@{~} ||@{~} c@{~} |@{~} c@{~} ||@{~} c@{~} |@{~} c@{~} ||@{~} c@{~} |@{~} c@{~} || @{~}c @{~}|@{~} c@{~} |}
\hline
 $\param$ & $r= 1$ & Rate & $r= 2$ & Rate & $r= 3$ & Rate & $r= 4$ & Rate & $r = 5$ & Rate \\ \hline 
$2^{0}$ 	 & 1.839e-04 	 & - 	& 1.227e-07 	 & - 	& 9.203e-11 	 & -	& 7.370e-14 	 & -	& 1.030e-17 	 & - \\ 
$2^{-1}$ 	 & 5.514e-04 	 & -1.58 	& 8.587e-07 	 & -2.81 	& 1.381e-09 	 & -3.91 	& 2.284e-12 	 & -4.95 	& 2.304e-15 	 & -7.81  \\ 
$2^{-2}$ 	 & 1.285e-03 	 & -1.22 	& 4.539e-06 	 & -2.40 	& 1.611e-08 	 & -3.54 	& 5.754e-11 	 & -4.65 	& 1.663e-13 	 & -6.17  \\ 	
$2^{-3}$ 	 & 2.749e-03 	 & -1.10 	& 2.073e-05 	 & -2.19 	& 1.560e-07 	 & -3.28 	& 1.175e-09 	 & -4.35 	& 8.027e-12 	 & -5.59  \\ 
$2^{-4}$ 	 & 5.658e-03 	 & -1.04 	& 8.838e-05 	 & -2.09 	& 1.371e-06 	 & -3.14 	& 2.126e-08 	 & -4.18  	& 3.138e-10 	 & -5.29  \\ 
$2^{-5}$ 	 & 1.141e-02 	 & -1.01 	& 3.637e-04 	 & -2.04 	& 1.144e-05 	 & -3.06 	& 3.589e-07 	 & -4.08 	& 1.095e-08 	 & -5.13  \\ 
$2^{-6}$ 	 & 2.263e-02 	 & -0.99 	& 1.454e-03 	 & -2.00 	& 9.160e-05 	 & -3.00 	& 5.681e-06 	 & -3.98 	& 3.438e-07 	 & -4.97  \\  
$2^{-3}$ 	 & 2.7e-03 	 & -1.10 	& 2.1e-05 	 & -2.19 	& 1.6e-07 	 & -3.28 	& 1.2e-09 	 & -4.35 	& 8.0e-12 	 & -5.59  \\ 
$2^{-4}$ 	 & 5.7e-03 	 & -1.04 	& 8.8e-05 	 & -2.09 	& 1.4e-06 	 & -3.14 	& 2.1e-08 	 & -4.18  	& 3.1e-10 	 & -5.29  \\ 
$2^{-5}$ 	 & 1.1e-02 	 & -1.01 	& 3.6e-04 	 & -2.04 	& 1.1e-05 	 & -3.06 	& 3.6e-07 	 & -4.08 	& 1.1e-08 	 & -5.13  \\ 
$2^{-6}$ 	 & 2.3e-02 	 & -0.99 	& 1.5e-03 	 & -2.00 	& 9.2e-05 	 & -3.00 	& 5.7e-06 	 & -3.98 	& 3.4e-07 	 & -4.97  \\  
   \hline		
    \end{tabular}                     
	\end{footnotesize}
	\caption{Global truncation error. Error rates varying $\param$ with fixed $\delt = 10^{-3}$ for the equation $u_t = -u$. Calculations for orders $r = 4, 5$ required 64 digits of accuracy. Errors for $\param = 2^{-2}$ are not shown, but used to compute rates for the values $\param = 2^{-3}$.}
	\label{GTE_Table1}
\end{table} 
 
  
\begin{table}[htb!]
\centering
\begin{small}
\begin{tabular}{ | @{~}c@{~} ||@{~} c@{~} |@{~} c@{~} |@{~} c@{~} |@{~} c@{~} |@{~} c@{~} |}
\hline
 $\param$ & $r = 1$ & $r = 2$ & $r = 3$ & $r = 4$ & $r = 5$ \\
\hline  
$2^{0}$ 	 & 3.400e-02 	 & 5.047e-03 	 & 8.545e-04 	 & 1.509e-04 	 & 2.704e-05 	 \\ 
$2^{-1}$ 	 & 5.102e-02 	 & 7.967e-03 	 & 1.278e-03 	 & 2.043e-04 	 & 3.239e-05 	\\ 
$2^{-2}$ 	 & 5.903e-02 	 & 9.766e-03 	 & 1.573e-03 	 & 2.404e-04 	 & 3.480e-05 	\\   
$2^{-3}$ 	 & 6.291e-02 	 & 1.069e-02 	 & 1.728e-03 	 & 2.587e-04 	 & 3.584e-05 	\\ 
$2^{-4}$ 	 & 6.482e-02 	 & 1.116e-02 	 & 1.804e-03 	 & 2.673e-04 	 & 3.618e-05 	\\ 
$2^{-5}$ 	 & 6.577e-02 	 & 1.139e-02 	 & 1.842e-03 	 & 2.713e-04 	 & 3.629e-05 	\\ 
$2^{-6}$ 	 & 6.625e-02 	 & 1.150e-02 	 & 1.860e-03 	 & 2.732e-04 	 & 3.753e-05 	\\ 
$2^{-7}$ 	 & 6.648e-02 	 & 1.156e-02 	 & 1.870e-03 	 & 2.742e-04 	 & 3.592e-05 \\ 
$2^{-8}$ 	 & 6.660e-02 	 & 1.159e-02 	 & 1.874e-03 	 & 2.718e-04 	 & {\bf 3.634e-05} 	  \\ 
$2^{-9}$ 	 & 6.666e-02 	 & 1.160e-02 	 & 1.877e-03 	 & 2.818e-04 	 & {\bf 3.634e-05}  \\ 
$2^{-10}$ 	 & 6.669e-02 	 & 1.161e-02 	 & 1.878e-03 	 & {\bf 2.750e-04 } 	 & {\bf 3.635e-05} \\ 
	\hline		
    \end{tabular}                     
    \end{small}
	\caption{Global truncation error fixing $\delt = \frac{1}{5}\param$ for 
	the equation $u_t = -u$. Note that the overall error decreased by 
	$\sim \frac{1}{5}$ with each order, and remains constant with a decrease 
	in $\param$. Numbers in bold required 64 digits of accuracy in the 
	computation.}
	\label{GTE_Table2}
\end{table}


\section{Systematic study of second order \imex coefficients}\label{Supp_roots}

The purpose of this section is to show systematically the following necessary 
condition for large regions of unconditional stability in second order \imex 
LMMs: {\bf The roots of $c(z)$ must become close to $1$}.
An \imex scheme of order $r$ with 
$s = r$-steps is characterized by $r$ parameters.  The proposed coefficients
in Definition~\ref{NewImExCoeff} only exploit a one-parameter family of 
\imex coefficients.  
Instead, here we examine \imex coefficients that arise from 
a polynomial $c(z)$ with complex roots. 

Schemes with $r = s =  2$ are characterized by 
\begin{align*}
	c(z) &= (z - \sigma)(z - \bar{\sigma}) = z^2 - 
	(\sigma + \bar{\sigma} ) z + |\sigma|^2. \\
	b(z) &= c(z) - (z-1)^2 = [2- (\sigma + \bar{\sigma} )] 
	z + |\sigma|^2 - 1 = Az - B,
\end{align*}
where $A = 2- (\sigma + \bar{\sigma} )$, $B = 1- |\sigma|^2$.  Note that 
the roots of $c(z)$ must satisfy $|\sigma| \leq 1$ for unconditional 
stability and hence $A, B$ are real and $0 \leq A \leq 4$, $0 \leq B \leq 1$.

The boundary of the limiting stability region $\mathcal{D}_{-\infty}$ 
takes the form:
\[ 
	\frac{c(z)}{b(z)} =  \frac{z^2 - A z + B -1 }{Az - B}, 
	\quad \textrm{such that } |z| = 1. 
\]

Note that the numerator $|c(z)| \leq 1 + 4 = 5$ is bounded.  Therefore for 
$\mathcal{D}_{-\infty}$ to be large, the denominator $b(z)$ must become small.  
Examining $b(z)$ we have
\begin{align*}
	|b(z)|^2 &= (Az - B)(A\bar{z} - B) 
	= A^2 + B^2 - 2 AB \; \textrm{Re } z, \quad \quad -1 \leq \textrm{Re } 
	z \leq 1 \\
	&\geq A^2 + B^2 - 2 AB = (A-B)^2 \geq 0.
\end{align*}

Therefore the denominator $b(z)$ is bounded away from $0$, unless 
$A \approx B$. Here
\[ (A - B) = (\sigma - 1)(\bar{\sigma} - 1).\]
Therefore, a necessary condition for a large stability region is to have the 
roots $\sigma = 1 - \delta$ for some arbitrary complex 
$\delta \in \mathbb{C}$ with $|\delta| \ll 1$.  The polynomials $b(z), c(z)$ 
are then
\[ 
b(z) = 2(\textrm{Re}\;\delta)\; z - 2\textrm{Re}\;\delta - |\delta|^2, 
\quad
c(z) = (z - 1 + \delta) (z - 1 + \bar{\delta}).
\]

Computing the implicit GTE error constant in equation (\ref{ErrConstants})
yields
\[
C_{I,r} = |\delta|^{-2}\Big( \frac{-74}{6} - 
\frac{35}{6}|\delta|^2 + \frac{563}{36} \textrm{Re }\delta\Big) 
= \mathcal{O}(|\delta|^{-2}), \quad |\delta| \rightarrow 0.
\]
Therefore, large regions
of unconditional stability are accompanied by a decrease in the error 
constant (when applied to arbitrary general initial data).

One can also examine the case where the polynomial $c(z)$ 
has real, but unequal, roots.  


\clearpage

{\small
\bibliographystyle{siam}
\bibliography{../references_complete}
}

\end{document}